\tikzset{
every picture/.style={line width=0.8pt, >=stealth,
                       baseline=-3pt,label distance=-3pt},
%%%%%%%%%%  Node styles
dotnode/.style={fill=black,circle,minimum size=2.5pt, inner sep=1pt, outer
sep=0},
morphism/.style={circle,draw,thin, inner sep=1pt, minimum size=15pt,
                 scale=0.8},
small_morphism/.style={circle,draw,thin,inner sep=1pt,
                       minimum size=10pt, scale=0.8},
coupon/.style={draw,thin, inner sep=1pt, minimum size=18pt,scale=0.8},
%%%% different line styles:
regular/.style={densely dashed},
edge/.style={thick, dashed, draw=blue, text=black},
boundary/.style={thick,  draw=blue, text=black},
overline/.style={preaction={draw,line width=2mm,white,-}},
drinfeld center/.style={>=stealth,green!60!black, double
distance=1pt,text=black},
%%%%%%% Fill styles %%%%%%%%%%%%%%%
cell/.style={fill=black!10},
subgraph/.style={fill=black!30},
%%%%%%% Mid-path arrows
midarrow/.style={postaction={decorate},
                 decoration={
                    markings,% switch on markings
                    mark=at position #1 with {\arrow{>}},
                 }},
midarrow/.default=0.5
}
\newtheorem*{theorem*}{Theorem}
\newtheorem{theorem}{Theorem}[section]
\newtheorem{lemma}[theorem]{Lemma}
\newtheorem{corollary}[theorem]{Corollary}
\theoremstyle{definition}
\newtheorem{definition}[theorem]{Definition}
\newtheorem{example}[theorem]{Example}
\theoremstyle{remark}
\newtheorem{remark}[theorem]{Remark}
\numberwithin{equation}{section}
\newcommand{\firef}[1]{Figure~{\rm\ref{#1}}}
\newcommand{\thref}[1]{Theorem~{\rm\ref{#1}}}
\newcommand{\leref}[1]{Lemma~{\rm\ref{#1}}}
\newcommand{\coref}[1]{Corollary~{\rm\ref{#1}}}
\newcommand{\deref}[1]{Definition~{\rm\ref{#1}}}
\newcommand{\exref}[1]{Example~{\rm\ref{#1}}}
\newcommand{\seref}[1]{Section~{\rm\ref{#1}}}
\newcommand{\st}{\; | \;}                               %%  such that
\newcommand{\cc}[1]{\underset{\scriptstyle #1}{\circ}}
\newcommand{\ov}{\overline}
\newcommand{\del}{\partial}
\newcommand{\<}{\langle}
\renewcommand{\>}{\rangle}
\newcommand{\xxto}{\xrightarrow}              % long arrow
\newcommand{\one}{\mathbf{1}}
\renewcommand{\i}{{\mathrm{i}}}   % imaginary unit
\newcommand{\kk}{\mathbf{k}}       % field
\newcommand{\Z}{\mathbb{Z}}       % integers
\newcommand{\R}{\mathbb{R}}       % real numbers
\newcommand{\DD}{\mathcal{D}}      % dimension of category
\newcommand{\C}{\mathcal{C}}      % category 
\newcommand{\Chat}{\widehat{\mathcal{C}}}      % category 
\newcommand{\A}{\mathcal{A}}      % category 
\newcommand{\Vect}{\mathcal{V}ec}  % vector spaces 
\newcommand{\ee}{\mathbf{e}}       % oriented edge
\newcommand{\VV}{\mathbf{V}}       % boundary condition
\newcommand{\al}{\alpha}
\newcommand{\be}{\beta}
\newcommand{\Ga}{\Gamma}
\newcommand{\de}{\delta}
\newcommand{\De}{\Delta}
\newcommand{\ph}{\varphi}
\newcommand{\Ph}{\Phi} 
\newcommand{\Si}{\Sigma}
\newcommand{\Sihat}{\widehat{\Sigma}}
\newcommand{\Hs}{H^{string}}
\newcommand{\HsD}{H^{string}_\De}
\newcommand{\Hhs}{\hat{H}^{string}}
\newcommand{\HhsD}{\hat{H}^{string}_\De}
\newcommand{\HTV}{H_{TV}}
\newcommand{\ZTV}{Z_{TV}}
\DeclareMathOperator{\Irr}{Irr}
\DeclareMathOperator{\id}{id}
\DeclareMathOperator{\Hom}{Hom}
\DeclareMathOperator{\End}{End}
\DeclareMathOperator{\im}{Im}
\DeclareMathOperator{\ev}{ev} % evaluation morphism
\DeclareMathOperator{\coev}{coev} % evaluation morphism
\DeclareMathOperator{\Obj}{Obj}
\DeclareMathOperator{\Gr}{Graph}
\DeclareMathOperator{\VGr}{VGraph}
\begin{document}

\title{String-net model of Turaev-Viro invariants}

\author{Alexander Kirillov, Jr.}
   \address{Department of Mathematics, SUNY at Stony Brook, 
            Stony Brook, NY 11794, USA}

    \email{kirillov@math.sunysb.edu}
    \urladdr{http://www.math.sunysb.edu/\textasciitilde kirillov/}
\thanks{This  work was partially suported by NSF grant DMS-0700589 }

\begin{abstract}
In this paper, we describe the relation between the Turaev--Viro TQFT and the
string-net space introduced in the papers of Levin and Wen. In particular,
the case of surfaces with boundary is considered in detail.    
\end{abstract}

\maketitle
%%%%%%%%%%%%%%%%%%%%%%%%%%%%%%%%%%%%%%%%%%%%%%%%%%%%%%%%%%%%%%
%%%%%%%%%%%%%%%%%%%%%%%%%%%%%%%%%%%%%%%%%%%%%%%%%%%%%%%%%%%%%%
\section*{Introduction}
It is known that any spherical fusion  category $\A$ (i.e. a semisimple abelian 
category with finitely many simple objects, an associative tensor product 
and a duality functor satisfying certain properties) defines a 
3-dimensional  topological quantum field theory (TQFT). This construction 
was first  given by Turaev and Viro in \ocite{TV} in the special case of 
the category of representations of the quantum group $U_q sl(2)$ and 
generalized to arbitrary spherical categories in \ocite{barrett}. It was 
recently shown in \ocite{balsam-kirillov}  that this theory can be extended 
to a 3-2-1 theory, i.e. allowing for surfaces with boundary and 3-
cobordisms with ``tubes'', or ``Wilson lines'', connecting the boundary 
circles of the 2-dimensional surfaces. 

In this paper we show that the vector space $Z_{TV}(\Si)$ which this theory 
associates  to a 2-dimensional surface can also be described in terms of 
so-called ``string  nets'', or space of colored graphs on the surface 
modulo some local relations. These  string nets were introduced in the 
paper of Levin and Wen \ocite{levin-wen}; the  corresponding 
model is called the Levin-Wen model. [In fact, this is one of the two 
equivalent descriptions of the Levin--Wen model; in the other description, 
the vector space associated to the surface is described as the ground space 
of certain Hamiltonian. This second description will not be used in this 
paper.] The same model has also  appeared in the works of Kitaev on 
topological quantum computation \ocite{kitaev}; for 
example, in the special case when $\A$ is the category of $\Z_2$-graded 
vector spaces, the model is known as Kitaev's toric code model.  

The idea that Turaev--Viro  and Kitaev--Levin--Wen models are equivalent is 
certainly not new. This statement has been made in a number of papers, most 
notably in \ocite{kadar} and \ocite{kuperberg}. However, none of these 
papers contain full proofs.  In these papers, many of  the  
statements are written in the special case when all multiplicities in 
tensor product of two simple objects are zero or one (with the note that it 
can be generalized)  and some details of the proofs are missing. The goal of 
this paper is to give a complete  and readable to mathematicians proof of 
the above statement.

In addition, we also carefully treat the case of surfaces with boundary, 
which in the  language of Levin-Wen model correspond to ``excited states'', 
or ``quasiparticles''.  We show that these excited states are again 
equivalent to the Turaev--Viro model   for  surfaces with boundary 
as defined in \ocite{balsam-kirillov}; in particular, we  show that  
the possible boundary conditions for a circle are described by 
objects in the category  $\C=Z(\A)$---the Drinfeld center of $\A$. 

\subsection*{Acknowledgments}
The author would like to thank Oleg Viro, Ben Balsam, Zhenghan Wang, Kevin
Walker, Anton Kapustin, Alexey Kitaev and Joel Kamnitzer for numerous
helpful discussions. Special thanks to Microsoft Station Q, where part of
this work was written.  

\subsection*{Notation}
Throughout the paper, we fix an algebraically closed field $\kk$ of
characteristic zero. All vector spaces and linear maps will be  
considered over $\kk$.

All manifolds, homeomorphisms etc. are considered in piecewise-linear (PL)
topology. [Note that it is well known that in dimensions 2 and below, PL
category is equivalent to the smooth category; however, PL setting is more
convenient for our purposes.] We denote by $D^2$ the ``standard''
two-dimensional disk:  
$$
D^2=\{(x,y)\in \R^2\st |x|\le 1,\ |y|\le 1\}
$$
and by $S^1$ its boundary. We will also use the notation $I=[0,1]$. 

Unless otherwise noted, all manifolds are oriented, and homeomorphisms are
orientation-preserving. $D^2$ is considered with the natural orientation
inherited from $\R^2$, and $S^1$ with the counterclockwise orientation. 

In the figures, we use different line styles for different kinds of lines. Note that some lines are colored, so if you are reading this paper printed in black and white, it will be difficult to tell these styles apart:

%%%%%%%%
  
\begin{tikzpicture}\draw[boundary] (0,0)--(1,0); \end{tikzpicture}
  \qquad boundary of the surface
  
\begin{tikzpicture}\draw[edge] (0,0)--(1,0); \end{tikzpicture}
  \qquad edge of a cell decomposition for the surface

\begin{tikzpicture}\draw (0,0)--(1,0); \end{tikzpicture}
  \qquad a graph on the surface

\begin{tikzpicture}\draw[regular] (0,0)--(1,0); \end{tikzpicture}
  \qquad arc of a graph colored in a special way 
   (see Eqn.~\eqref{e:regular_color} )

\begin{tikzpicture}\draw[drinfeld center] (0,0)--(1,0); \end{tikzpicture}
  \qquad arc of a graph colored by an object of Drinfeld center of $\A$ 
  (see   \firef{f:crossing})

%%%%%%%%%%%%%%%%%%%%%%%%%%%%%%%%%%%%%%%%%%%%%%%%%%%%%%%%%%%%%%
\section{Spherical categories: an overview}\label{s:LW1}

In this section we collect notation and some facts about spherical
categories.

We denote by $\Vect$ the category of finite-dimensional vector spaces over
the ground field $\kk$.

Throughout the paper, $\A$ will denote a spherical fusion category over 
 $\kk$. We refer the reader to the paper \ocite{drinfeld} for the
definitions and properties of such categories.

In particular, $\A$ is semisimple with finitely many
isomorphism classes of simple objects. We will denote by $\Irr(\A)$ the
set of isomorphism classes of simple objects; abusing the language, we will
frequently use the same letter $i$ for denoting both a  simple object and 
its isomorphism class. In those cases where it can lead to confusion, we
will use notation $X_i$ for a representative of isomorphism class $i$. We
will also denote by $\one=X_0$ the unit object in $\A$ (which is simple).

To simplify the notation, we will assume that $\A$ is a strict pivotal
category, i.e. that $V^{**}=V$. As is well-known, this is not really a
restriction, since any pivotal category is equivalent to a strict pivotal
category.

We will denote, for an object $X$ of $\A$, by
$$
d_X=\dim X\in \kk
$$
its categorical dimension; it is known that for simple $X$, $d_X$ is
non-zero. We will fix, for any simple object $X\in \A$, a choice of
square root $\sqrt{d_X}$ so that for $X=\one$, $\sqrt{d_\one}=1$ and that
for any simple $X$, $\sqrt{d_X}=\sqrt{d_{X^*}}$. 

We will also denote
\begin{equation}\label{e:DD}
\DD=\sqrt{\sum_{i\in \Irr(\A)}d_i^2}
\end{equation}
(throughout the paper, we fix a choice of the square root). Note that by
results of \ocite{ENO2005}, $\DD\ne 0$.

We define the functor $\A^{\boxtimes n}\to \Vect$ by
\begin{equation}\label{e:vev}
\<V_1,\dots,V_n\>=\Hom_\A(\one,
V_1\otimes\dots\otimes V_n)
\end{equation}
for any collection $V_1,\dots, V_n$ of objects of $\A$. Note that pivotal
structure gives functorial isomorphisms
\begin{equation}\label{e:cyclic}
z\colon\<V_1,\dots,V_n\>\simeq \<V_n, V_1,\dots,V_{n-1}\>
\end{equation}
such that $z^n=\id$ (see \ocite{BK}*{Section 5.3}); thus, up to a canonical
isomorphism, the space $\<V_1,\dots,V_n\>$ only depends on the cyclic order
of $V_1,\dots, V_n$.

We have a natural composition map 
\begin{equation}\label{e:composition}
\begin{aligned}
 \<V_1,\dots,V_n, X\>\otimes\<X^*, W_1,\dots,
W_m\>&\to\<V_1,\dots,V_n, W_1,\dots, W_m\>\\
\ph\otimes\psi\mapsto \ph\cc{X}\psi= \ev_X\circ (\ph\otimes\psi)
\end{aligned}
\end{equation}
where $\ev_X\colon X\otimes  X^*\to \one$ is the evaluation morphism (see
\firef{f:local_rels1} for an illustration of this operation).  

Note that for any objects $A,B\in \Obj \C$, we have a
non-degenerate pairing $\Hom_\C(A,B)\otimes \Hom_\C(A^*,B^*)\to \kk$
defined by
\begin{equation}\label{e:pairing}
(\ph, \ph')=(\one\xxto{\coev_A}A\otimes A^*\xxto{\ph\otimes \ph'}
  B\otimes B^*\xxto{\ev_B}\one)
\end{equation}
In particular, this gives us a non-degenerate pairing
$\<V_1,\dots,V_n\>\otimes \<V_n^*,\dots,V_1^*\>\to \kk$ and thus,
 functorial isomorphisms
\begin{equation}\label{e:dual}
\<V_1,\dots,V_n\>^*\simeq \<V_n^*,\dots,V_1^*\>
\end{equation}
compatible with the cyclic permutations \eqref{e:cyclic}.

%%%%%%%%%%%%%%%%%%%%%%%%%%%%%%%%%%%%%%%%%%%%%%%
\section{Colored graphs}\label{s:colored}
We will consider finite  graphs embedded in an oriented surface $\Si$
(which is not required to be compact and may have boundary); for such a
graph $\Ga$, let $E(\Ga)$ be the set of edges. Note that edges are not
oriented. Let $E^{or}$ be the set of oriented edges, i.e. pairs $\ee=(e,
\text{orientation of } e)$; for such an oriented edge $\ee$, we denote by
$\bar{\ee}$ the edge with opposite orientation.

If $\Si$ has a boundary, the graph is allowed to have uncolored one-valent
vertices on $\del \Si$ but no other common points with $\del \Si$; all
other  vertices will  be called interior.  We will  call the edges of $\Ga$
terminating at these  one-valent vertices {\em legs}.   
%%%%%%%%%%%%%%%%%%%%%%
\begin{definition}\label{d:coloring} Let $\Si$ an oriented surface
(possibly with boundary) and $\Ga\subset \Si$ --- an embedded graph as
defined above.  A {\em coloring} of $\Ga$ is the
following data:

  \begin{itemize}
    \item Choice of an object $V(\ee)\in \Obj \A$ for every oriented edge
        $\ee\in E^{or}(\Ga)$ so that $V(\ov{\ee})=V(\ee)^*$.
    \item Choice of a vector $\ph(v)\in \<V(\ee_1),\dots,V(\ee_n)\>$ 
      (see \eqref{e:vev})  for    every interior vertex $v$, where 
      $\ee_1, \dots, \ee_n$ are edges incident to $v$, taken in counterclockwise 
      order and with outward orientation (see \firef{f:coloring}). 
\end{itemize}

An {\em isomorphism} $f$ of two coloring $\{V(\ee), \ph(v)\}$, $\{V'(\ee), 
\ph'(v)\}$ is a collection of isomorphisms $f_\ee\colon V(\ee)\simeq 
V'(\ee)$ which  agree  with isomorphisms $V(\ov{\ee})=V(\ee)^*$ and which 
identify $\ph', \ph$:  $\ph'(v)=f\circ\ph(v)$. 

We will denote the set of all colored graphs on a surface $\Si$ by
$\Gr(\Si)$.
\end{definition}
%%%%%%%%%%%%%%%%%%%%%%

Note that if $\Si$ has a boundary, then every colored graph $\Ga$ defines
a collection of points $B=\{b_1,\dots, b_n\}\subset \del \Si$ (the
endpoints of the legs of $\Ga$) and a collection of objects $V_b\in \Obj\
\A$ for every $b \in B$: the colors of the legs of $\Ga$ taken with
outgoing orientation. We will denote the pair $(B, \{V_b\})$ by
$\VV=\Ga\cap \del\Si$ and call it {\em boundary value}. We will denote  
$$
\Gr(\Si, \VV)=\text{set of all colored graphs in $\Si$ with boundary value
} \VV.
$$ 
We will return to the discussion of possible boundary values  later in
\seref{s:boundary}. 

We can also consider formal linear combinations of colored graphs. Namely,
for fixed boundary value $\VV$ as above, we will denote 
\begin{equation}\label{e:vgr}
\VGr(\Si,\VV)=\{\text{formal linear combinations of graphs }\Ga\in
\Gr(\Si,\VV)\}
\end{equation}
In particular, if $\del \Si=\varnothing$, then the only possible boundary
condition is trivial ($B=\varnothing$); in this case, we wil just write
$\VGr(\Si)$.

In the figures, we will show the coloring by choosing for each edge an 
orientation and writing the  color of the corresponding oriented edge next 
to it; we will also frequently  replace vertices by round circles labeled
by the corresponding  vector $\ph(v)$, as shown in \firef{f:coloring}.

\begin{figure}[ht]
%%%%%%%%
\begin{tikzpicture}
\node[morphism] (ph) at (0,0) {$\ph$};
\draw[->] (ph)-- +(240:1cm) node[pos=0.7, left] {$V_n$} ;
\draw[->] (ph)-- +(180:1cm);
\draw[->] (ph)-- +(120:1cm);
\draw[->] (ph)-- +(60:1cm);
\draw[->] (ph)-- +(0:1cm);
\draw[->] (ph)-- +(-60:1cm) node[pos=0.7, right] {$V_1$};
\end{tikzpicture}
%%%%%%%%
\caption{Labeling of colored graphs}\label{f:coloring}
\end{figure}
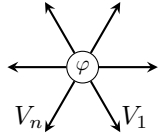

Note that since we have a canonical isomorphism $ \<V, W^*\>\simeq
\Hom_\A(W,V)$, we can also interpret an element $\ph\in \<V_1,\dots, V_n,
W_k^*, \dots, W_1^*\>$ as a morphism $W_1\otimes\dots \otimes W_k \to
V_1\otimes\dots\otimes V_n$, as illustrated in \firef{f:tangle}.
\begin{figure}[ht]
%%%%%%%%
\begin{tikzpicture}
\node[morphism] (ph) at (0,0) {$\ph$};
\draw[->] (ph)-- +(240:1cm) node[pos=0.7, left] {$V_1$} ;
\draw[<-] (ph)-- +(120:1cm) node[pos=0.7, left] {$W_1$};
\draw[<-] (ph)-- +(60:1cm) node[pos=0.7, right] {$W_k$};
\draw[->] (ph)-- +(-60:1cm) node[pos=0.7, right] {$V_n$};
\end{tikzpicture}
%%%%%%%%
$$ \ph\in \<V_1,\dots, V_n, W_k^*,\dots, W_1^*\>\simeq 
\Hom_\A(W_1\otimes \dots\otimes W_k, V_1\otimes\dots\otimes V_n)
$$
\caption{Colored graph as a tangle}\label{f:tangle}
\end{figure}
\begin{remark}
Note that this convention is {\bf opposite} to the conversion used in
\ocites{balsam-kirillov,BK}, where morphisms were acting ``from the bottom
to  top''. Thus, care must be taken when using  graphical presentations of
morphisms  from those papers.  
\end{remark}

We will also use the following convention: if a figure contains a pair of
vertices, one with outgoing edges labeled $V_1,\dots, V_n$ and
the other with edges labeled $V_n^*,\dots, V_1^*$ , and the vertices  are
labeled by the same letter $\al$  (or $\be$, or \dots)
it will stand for summation over the dual bases:
\begin{equation}\label{e:summation_convention}
%%%%%%%%
\begin{tikzpicture}
\node[morphism] (ph) at (0,0) {$\al$};
\draw[->] (ph)-- +(240:1cm) node[pos=0.7, left] {$V^{*}_1$} ;
\draw[->] (ph)-- +(180:1cm);
\draw[->] (ph)-- +(120:1cm);
\draw[->] (ph)-- +(60:1cm);
\draw[->] (ph)-- +(0:1cm);
\draw[->] (ph)-- +(-60:1cm) node[pos=0.7, right] {$V^{*}_n$};
\node[morphism] (ph') at (3,0) {$\al$};
\draw[->] (ph')-- +(240:1cm) node[pos=0.7, left] {$V_n$} ;
\draw[->] (ph')-- +(180:1cm);
\draw[->] (ph')-- +(120:1cm);
\draw[->] (ph')-- +(60:1cm);
\draw[->] (ph')-- +(0:1cm);
\draw[->] (ph')-- +(-60:1cm) node[pos=0.7, right] {$V_1$};
\end{tikzpicture}
%%%%%%%%
\quad = \sum_\al\quad
%%%%%%%%
\begin{tikzpicture}
\node[morphism] (ph) at (0,0) {$\ph_\al$};
\draw[->] (ph)-- +(240:1cm) node[pos=0.7, left] {$V^{*}_1$} ;
\draw[->] (ph)-- +(180:1cm);
\draw[->] (ph)-- +(120:1cm);
\draw[->] (ph)-- +(60:1cm);
\draw[->] (ph)-- +(0:1cm);
\draw[->] (ph)-- +(-60:1cm) node[pos=0.7, right] {$V^{*}_n$};
\node[morphism] (ph') at (3,0) {$\ph^\al$};
\draw[->] (ph')-- +(240:1cm) node[pos=0.7, left] {$V_n$} ;
\draw[->] (ph')-- +(180:1cm);
\draw[->] (ph')-- +(120:1cm);
\draw[->] (ph')-- +(60:1cm);
\draw[->] (ph')-- +(0:1cm);
\draw[->] (ph')-- +(-60:1cm) node[pos=0.7, right] {$V_1$};
\end{tikzpicture}
%%%%%%%%
\end{equation}
where $\ph_\al\in \<V_1,\dots, V_n\>$, $\ph^\al\in \<V_n^*,\dots, V_1^*\>$
are dual bases with respect to pairing \eqref{e:pairing}.

The following theorem is a variation of result of Reshetikhin and Turaev. 
\begin{theorem}\label{t:RT}
  There is a unique  way to assign to every colored
  planar graph $\Ga$ in a disk $D\subset \R^2$ a vector
  \begin{equation}
    \<\Ga\>_D\in\<V(\ee_1),\dots, V(\ee_n)\>
  \end{equation}
  where $\ee_1,\dots, \ee_n$ are the edges of $\Ga$ meeting the boundary
  of $D$ (legs), taken in counterclockwise order and with outgoing orientation,
  so that that following conditions are satisfied:
  \begin{enumerate}
     \item $\<\Ga\>$ only depends on the isotopy  class of $\Ga$.

    \item If $\Ga$ is a single vertex colored by
          $\ph\in \<V(\ee_1),\dots, V(\ee_n)\>$, then $\<\Ga\>=\ph$.
     
    \item Local relations shown in \firef{f:local_rels1} hold. 
\begin{figure}[ht]
%%%%%%%%%%%%%%%%%%%%%%%%%%%%%%%%%%%%%%%%%%%%%%%%%%%%%%%
%%%%%%%%%%
\begin{tikzpicture}
\node[morphism] (ph) at (0,0) {$\ph$};
\node[morphism] (psi) at (1,0) {$\psi$};
\node at (-0.7,0.1) {$\vdots$};
\node at (1.7,0.1) {$\vdots$};
\draw[->] (ph)-- +(220:1cm) node[pos=1.0,below,scale=0.8]
{$V_n$};
\draw[->] (ph)-- +(140:1cm) node[pos=1.0,above,scale=0.8]
{$V_1$};
\draw[->] (psi)-- +(40:1cm) node[pos=1.0,above,scale=0.8]
{$W_m$};
\draw[->] (psi)-- +(-40:1cm) node[pos=1.0,below,scale=0.8]
{$W_1$};
\draw[->] (ph) -- (psi) node[pos=0.5,above,scale=0.8] {$X$};
\end{tikzpicture}
%%%%%%%%
=
%%%%%%%%
\begin{tikzpicture}
\node[ellipse, thin, scale=0.8, inner sep=1pt, draw] (ph) at (0,0)
             {$\ph\cc{X}\psi$};
\node at (-0.8,0.1) {$\vdots$};
\node at (0.8,0.1) {$\vdots$};
\draw[->] (ph)-- +(220:1cm) node[pos=1.0,below,scale=0.8] {$V_n$};
\draw[->] (ph)-- +(140:1cm) node[pos=1.0,above,scale=0.8] {$V_1$};
\draw[->] (ph)-- +(40:1cm) node[pos=1.0,above,scale=0.8]  {$W_m$};
\draw[->] (ph)-- +(-40:1cm) node[pos=1.0,below,scale=0.8] {$W_1$};
\end{tikzpicture}
%%%%%%%%%
\\
%%%%%%%%%
\begin{tikzpicture}
\node[dotnode] (ph) at (0,0) {};
\node[dotnode] (psi) at (1.5,0) {};
\node at (-0.7,0.1) {$\vdots$};
\node at (2.2,0.1) {$\vdots$};
\draw[->] (ph)-- +(220:1cm) node[pos=1.0,below,scale=0.8] {$A_n$};
\draw[->] (ph)-- +(140:1cm) node[pos=1.0,above,scale=0.8] {$A_1$};
\draw[->] (psi)-- +(40:1cm) node[pos=1.0,above,scale=0.8] {$B_m$};
\draw[->] (psi)-- +(-40:1cm) node[pos=1.0,below,scale=0.8] {$B_1$};
\draw[out=45,in=135, midarrow] (ph) to (psi)
                node[pos=0.5,above, scale=0.8] {$V_k$};
\draw[ out=15,in=165, midarrow] (ph) to (psi);
\draw[ out=-15,in=195, midarrow] (ph) to (psi);
\draw[ out=-45,in=225, midarrow] (ph) to (psi)
                node[pos=0.5,below, scale=0.8] {$V_1$};
\end{tikzpicture}
%%%%%%%%%%%
=
%%%%%%%%%%%
\begin{tikzpicture}
\node[dotnode] (ph) at (0,0) {};
\node[dotnode] (psi) at (1.5,0) {};
\node at (-0.7,0.1) {$\vdots$};
\node at (2.2,0.1) {$\vdots$};
\draw[->] (ph)-- +(220:1cm) node[pos=1.0,below,scale=0.8] {$A_n$};
\draw[->] (ph)-- +(140:1cm) node[pos=1.0,above,scale=0.8] {$A_1$};
\draw[->] (psi)-- +(40:1cm) node[pos=1.0,above,scale=0.8] {$B_m$};
\draw[->] (psi)-- +(-40:1cm) node[pos=1.0,below,scale=0.8] {$B_1$};
\draw[ ->] (ph) to (psi)
            node[pos=0.5,above,scale=0.8] {$V_1\otimes \dots\otimes V_k$};
\end{tikzpicture}
%%%%%%%
\qquad $k\ge 0$\\
%%%%%%%
\begin{tikzpicture}
\node[ellipse, scale=0.8, inner sep=1pt, draw,thin] (ph) at (0,0)
{$\coev$};
\draw[->] (ph)-- +(180:1cm) node[pos=1.0,above,scale=0.8] {$V$};
\draw[->] (ph)-- +(0:1cm) node[pos=1.0,above,scale=0.8] {$V^*$};
\end{tikzpicture}
%%%%%%%%
=
%%%%%%%%
\begin{tikzpicture}
\draw[->] (2,0)-- (0,0) node[pos=0.5,above,scale=0.8] {$V$};
\end{tikzpicture}
%%%%%%%%%%%%%%%%%%%%%%%%%%
\caption{Local relations for colored graphs.
         Here $\ph\circ\psi$ is defined by \eqref{e:composition}. 
        }\label{f:local_rels1}
\end{figure}

    Local relations should be understood as follows: for any pair 
    $\Ga, \Ga'$ of colored graphs which are identical  outside a subdisk 
	$D'\subset D$, and in this disk are homeomorphic to the graphs
    shown in  \firef{f:local_rels1},  we must have $\<\Ga\>=\<\Ga'\>$. 
   \end{enumerate}
    Moreover, so defined $\<\Ga\>$ satisfies the following properties:
    \begin{enumerate} 
    \item $\<\Ga\>$ is linear in color of each vertex $v$ \textup{(}for 
         fixed colors of edges and other vertices\textup{)}.
    \item $\<\Ga\>$ is additive in colors of edges as shown in 
          \firef{f:linearity}.
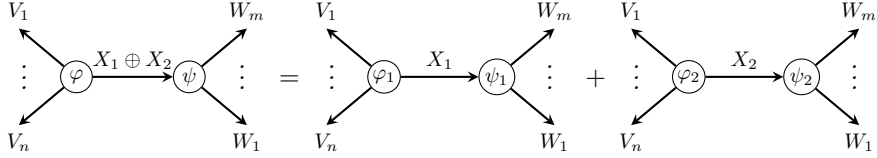
\begin{figure}[ht]
$$
%%%%%%%%%%%%
\begin{tikzpicture}
\node[morphism] (ph) at (0,0) {$\ph$};
\node[morphism] (psi) at (1.5,0) {$\psi$};
\node at (-0.7,0.1) {$\vdots$};
\node at (2.2,0.1) {$\vdots$};
\draw[->] (ph)-- +(220:1cm) node[pos=1.0,below,scale=0.8] {$V_n$};
\draw[->] (ph)-- +(140:1cm) node[pos=1.0,above,scale=0.8] {$V_1$};
\draw[->] (psi)-- +(40:1cm) node[pos=1.0,above,scale=0.8] {$W_m$};
\draw[->] (psi)-- +(-40:1cm) node[pos=1.0,below,scale=0.8] {$W_1$};
\draw[->] (ph) -- (psi) node[pos=0.5,above,scale=0.8] {$X_1\oplus X_2$};
\end{tikzpicture}
%%%%%%%%%%%%
=
%%%%%%%%%%%%
\begin{tikzpicture}
\node[morphism] (ph) at (0,0) {$\ph_1$};
\node[morphism] (psi) at (1.5,0) {$\psi_1$};
\node at (-0.7,0.1) {$\vdots$};
\node at (2.2,0.1) {$\vdots$};
\draw[->] (ph)-- +(220:1cm) node[pos=1.0,below,scale=0.8]{$V_n$};
\draw[->] (ph)-- +(140:1cm) node[pos=1.0,above,scale=0.8]{$V_1$};
\draw[->] (psi)-- +(40:1cm) node[pos=1.0,above,scale=0.8]{$W_m$};
\draw[->] (psi)-- +(-40:1cm) node[pos=1.0,below,scale=0.8]{$W_1$};
\draw[->] (ph) -- (psi) node[pos=0.5,above,scale=0.8] {$X_1$};
\end{tikzpicture}
%%%%%%%%%%%%
+
%%%%%%%%%%%%
\begin{tikzpicture}
\node[morphism] (ph) at (0,0) {$\ph_2$};
\node[morphism] (psi) at (1.5,0) {$\psi_2$};
\node at (-0.7,0.1) {$\vdots$};
\node at (2.2,0.1) {$\vdots$};
\draw[->] (ph)-- +(220:1cm) node[pos=1.0,below,scale=0.8]{$V_n$};
\draw[->] (ph)-- +(140:1cm) node[pos=1.0,above,scale=0.8]{$V_1$};
\draw[->] (psi)-- +(40:1cm) node[pos=1.0,above,scale=0.8]{$W_m$};
\draw[->] (psi)-- +(-40:1cm) node[pos=1.0,below,scale=0.8]{$W_1$};
\draw[->] (ph) -- (psi) node[pos=0.5,above,scale=0.8] {$X_2$};
\end{tikzpicture}
%%%%%%%%%%%%
$$
\caption{Linearity of $\<\Ga\>$. Here $\ph_1,\ph_2$ are compositions
of $\ph$ with projector $X_1\oplus X_2\to X_1$ (respectively, 
$X_1\oplus X_2\to X_2$), and similarly for $\psi_1,\psi_2$.
}\label{f:linearity}
\end{figure}
    
    \item If $\Ga,\Ga'$ are two isomorphic colorings of the same graph,
      then $\<\Ga\>=\<\Ga'\>$. 
    \item Composition property: if $D'\subset D$ is a subdisk such
      that $\del D'$ does not contain vertices of $\Ga$ and meets edges of
      $\Ga$ transversally, then   $\<\Ga\>_D$ will not change if we replace
      subgraph $\Ga\cap D'$ by a single vertex colored by
      $\<\Ga\cap D'\>_{D'}$.

  \end{enumerate}
We will call the vector $\<\Ga\>$ the {\em evaluation} of $\Ga$.
\end{theorem}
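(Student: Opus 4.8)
The plan is to build $\langle\Ga\rangle_D$ by a reduction procedure: first reduce an arbitrary colored planar graph to a ``normal form'' using the local relations, show the result is well-defined, and then verify that the assignment so obtained satisfies all the listed conditions and properties. Concretely, I would first use the first local relation in \firef{f:local_rels1} (fusing two vertices joined by an edge into a single vertex colored by $\ph\cc{X}\psi$) together with the second (replacing a bundle of parallel edges between two vertices by a single edge colored by the tensor product), and the $\coev$-relation (removing a $\coev$ vertex), to contract any connected component of $\Ga$ with at least one interior vertex down to a single vertex. The remaining components are either contractible loops with no vertices, or closed circles; using the $\coev$-relation backwards and the composition idea one sees a circle colored $V$ evaluates to $d_V$ times the empty graph, so after this reduction $\Ga$ is equivalent to a scalar multiple of a single ``star'' vertex whose outgoing legs are labeled by $V(\ee_1),\dots,V(\ee_n)$ in counterclockwise order; condition (2) then forces the value to be the corresponding element of $\langle V(\ee_1),\dots,V(\ee_n)\rangle$. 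This simultaneously proves existence and, because every step is forced by conditions (1)--(3), uniqueness.

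The second half of the argument is establishing well-definedness of this reduction: different sequences of moves, or different choices of subdisks in which to apply them, must give the same answer. Here I would invoke the underlying categorical coherence, i.e. the Reshetikhin--Turaev / Shum construction of the canonical functor from the category of $\A$-colored ribbon (here just planar, since $\A$ is pivotal and we are in a disk) graphs to $\A$: the value $\langle\Ga\rangle_D$ is exactly the morphism this functor assigns, read through the identification $\Hom_\A(\one, V_1\otimes\dots\otimes V_n)=\langle V_1,\dots,V_n\rangle$ and the cyclic isomorphisms \eqref{e:cyclic}. Isotopy invariance (condition (1)) is then the isotopy invariance of that functor, specialized to the planar (genus zero, no crossings) case where no braiding or twist is needed — only the pivotal structure. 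The properties: linearity in each vertex color (property (1)) and additivity in edge colors (property (2), \firef{f:linearity}) are immediate from bilinearity of composition and tensor product of morphisms and from $V_1\oplus V_2$ decomposing the relevant $\Hom$ spaces; invariance under isomorphism of colorings (property (3)) follows because an isomorphism $f_\ee$ of edge colors, compatible with duality and intertwining the vertex vectors, is exactly a natural transformation between the two colored-graph diagrams, hence induces the identity after evaluation; and the composition property (property (4)) is functoriality: cutting out a subdisk $D'$ meeting $\Ga$ transversally and replacing $\Ga\cap D'$ by a vertex colored $\langle\Ga\cap D'\rangle_{D'}$ is precisely the statement that evaluating in stages agrees with evaluating all at once, which is how the RT functor is defined.

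The main obstacle, and the place I would spend the most care, is the well-definedness/coherence step — showing the reduction is independent of choices. The subtlety is bookkeeping: one must fix conventions for how the cyclic-order identifications \eqref{e:cyclic}, the dualities $V(\ov\ee)=V(\ee)^*$, and the pairing \eqref{e:pairing} interact when two vertices are fused or an edge bundle is collapsed, and check that these conventions are consistent around a full loop (so that $z^n=\id$ is actually used). Since the paper's sign/orientation conventions are \emph{opposite} to those of \ocites{balsam-kirillov,BK} (as flagged in the Remark), I cannot simply cite those sources verbatim; instead I would either (a) reprove the planar case of the RT coherence theorem directly, using that in a disk every colored graph is isotopic to one in ``Morse position'' and that the only relations needed are the pivotal-category axioms plus naturality, or (b) carefully match conventions to an existing reference such as \ocite{BK}*{Section 5.3} and note the orientation dictionary once and for all. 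Everything else — the explicit contraction algorithm and the verification of the four listed properties — is then routine diagram-chasing.
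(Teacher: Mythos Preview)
The paper does not actually give a proof of this theorem: it is stated as ``a variation of result of Reshetikhin and Turaev'' and left at that, with no proof environment following the statement. Your proposal --- construct the evaluation via the Reshetikhin--Turaev functor from colored planar graphs to $\A$, using pivotal coherence for well-definedness, and then read off the listed properties from functoriality and bilinearity --- is exactly the standard argument the paper is implicitly citing, and is correct in outline.

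Your instinct about where the real work lies (the coherence/well-definedness step, and the need to match the paper's orientation conventions against \ocite{BK}) is accurate; option (b), carefully setting up the dictionary once, is what a reader would expect here.
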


In particular, for a planar graph $\Ga\subset \R^2$ with no outgoing
legs, $\<\Ga\>\in \kk$ is a number. 

This evaluation map can be naturally extended to formal linear combinations 
of graphs: for fixed boundary value $\VV=(\{b_1,\dots, b_n\},\{V_1,\dots,
V_n\})$,  the map $\Ga\mapsto \<\Ga\>$ extends in an obvious way to a
linear map 
$$
\VGr(D,\VV)\to \<V_1\otimes\dots\otimes V_n\>
$$

%%%%%%%%%%%%%%%%%%%%%%%%%%%%%%%%%%%%%%%%%%%%%%%%%%%%%%%%%%%%%%
\section{String nets}\label{s:sn}

In this section we give a definition and list some properties of the
main object of our study, the string-net space. Practically all results of
this section are known (with possible  exception of
\leref{l:edge_crossing}); see, e.g., \ocite{kuperberg}).

Let $\Si$ be an oriented   surface; as before, it can have boundary and we 
 do not assume that it is compact --- for example, compact surface with
punctures is also allowed. We fix a boundary value $\VV$ as in the previous
section and consider the set $\Gr(\Si,\VV)$ of  colored graphs in $\Si$
with boundary value $\VV$; we will also use  the vector space $\VGr(
\Si,\VV)$ of formal linear combinations of such graphs.

We now want to define local relations between graphs. One way of doing it 
is as follows.

Let $D\subset \Si$ be an embedded disk, 
$\mathbf\Ga=c_1\Ga_1+\dots+c_n\Ga_n \in \VGr(\Si,\VV)$  --- a linear
combination of colored graphs in $\Si$ such that
\begin{enumerate}
  \item $\Ga$ is transversal to $\del D$ (i.e., no vertices of $\Ga_i$ 
      are on the boundary of $D$ and edges of each $\Ga_i$ meet 
      $\del D$ transversally).
  \item All $\Ga_i$ coincide outside of $D$.
  \item $\<\mathbf{\Ga}\>_D=\sum c_i\<\Ga_i\cap D\>_D=0$, where
      $\<\Ga_i\cap D\>_D$ is the expectation value defined by \thref{t:RT}.
\end{enumerate}
In this case we will call $\mathbf{\Ga}$ a null graph. 

\begin{definition}\label{d:string-net}
 Let $\Si$ be an oriented surface (possibly with boundary) and let 
 $\VV=(B, \{V_b\})$ be a boundary value as defined in  \seref{s:colored}. 
 The string-net space $\Hs(\Si, \VV)$ is the quotient space 
  $$
   \Hs(\Si, \VV)=\VGr(\Si, \VV)/N(\Si, \VV)
  $$
  where $N(\Si, \VV)$ is  the subspace spanned by null graphs 
  (for all possible embedded disks  $D\subset \Si$). 
\end{definition}

\begin{remark}
  This definition is an example  of a general construction of TQFT as 
  space of fields modulo local relations, as  defined in
 \ocite{walker}.
\end{remark}

\begin{example}\label{x:R2}
  Let $\Si=S^2-\{pt\}=\R^2$. Then $\Hs(\Si)=\kk$: the map $\Ga\mapsto
  \<\Ga\>$ descends to  an isomorphism $\Hs\to\kk$.
\end{example}

Motivated by this example, we will denote for a linear combination 
of  colored graphs $\mathbf{\Ga}\in \VGr(\Si, \VV)$ its class in $\Hs(\Si,
\VV)$  by  $\<\Ga\>_\Si$ (or just $\<\Ga\>$ when there is no ambiguity).

It is immediate from the definition that all local  relations listed in
\thref{t:RT} are satisfied in $\Hs$. The following theorem lists some
corollaries of these  relations.

\begin{theorem}\label{t:local_rels2}\par\noindent
\begin{enumerate}
  \item If $\Ga, \Ga'$ are isomorphic coloring of the same graph 
  \textup{(}see \deref{d:coloring}\textup{)}, 
     then $\<\Ga\>=\<\Ga'\>$.
  \item If $\Ga, \Ga'$ are isotopic, then $\<\Ga\>=\<\Ga'\>$.
  \item The map $\Ga\to\<\Ga\>$ is linear in colors of edges and vertices
    in the same sense as in \thref{t:RT}. 
  \item $\Hs(\Si_1\sqcup\Si_2)=\Hs(\Si_1)\otimes \Hs(\Si_2)$
  \item For any surface $\Si$, we have the following local relations 
     in $\Hs(\Si)$: 
    
     \begin{align}
      &\sum_{i\in \Irr(\A)} d_i 
%%%%%%%%%%%%
\begin{tikzpicture}
\node[morphism] (ph) at (0,-0.5) {$\al$};
\node[morphism] (psi) at (0,0.5) {$\al$};
\node at (0,1.1) {$\dots$};
\node at (0,-1.1) {$\dots$};
\draw[->] (ph)-- +(-110:1cm) node[pos=1.0,left,scale=0.8]
{$V_1$};
\draw[->] (ph)-- +(-70:1cm) node[pos=1.0,right,scale=0.8]
{$V_n$};
\draw[<-] (psi)-- +(110:1cm) node[pos=1.0,left,scale=0.8]
{$V_1$};
\draw[<-] (psi)-- +(70:1cm) node[pos=1.0,right,scale=0.8]
{$V_n$};
\draw[<-] (ph) -- (psi) node[pos=0.5,left,scale=0.8] {$i$};
\end{tikzpicture}
%%%%%%%%%%%
=
%%%%%%%%%%%
\begin{tikzpicture}
\node at (0,0) {$\dots$};
\draw[<-] (-0.3,-1)-- (-0.3,1) node[pos=0.5,left,scale=0.8]
{$V_1$};
\draw[<-] (0.3,-1)-- (0.3,1) node[pos=0.5,right,scale=0.8]
{$V_n$};
\end{tikzpicture}
%%%%%%%%%%%%
          \label{e:local_rels2a}\\
&
%%%%%%%%%%%%%%%
\begin{tikzpicture}
\draw[->] (0.5,0) arc (0:180:0.5cm) arc (-180:0:0.5cm);
\node at (45:0.7cm) {$X$};
\end{tikzpicture}
%%%%%%%%%%%%%
\quad = d_X\\
&
%%%%%%%%%%%%
\begin{tikzpicture}
\path[subgraph] (0,0) circle (0.5);
\draw (0.5,0)--(1.5,0) node[pos=0.7, above] {$i$};
\end{tikzpicture}
%%%%%%%%%%%%
\quad =0,\qquad i\in \Irr(\A), i\not\simeq \one
     \end{align}
  In the last picture, the shaded area is an embedded disk which  can
contain any subgraph such that the only edge crossing the boundary of the
shaded disk is the one labeled by $i$.  
\end{enumerate}
\end{theorem}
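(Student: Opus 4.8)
\medskip
\noindent\textbf{Proof plan.}
The unifying observation is that every assertion in the theorem is \emph{local}: it is an identity between linear combinations of colored graphs that agree outside some embedded disk $D\subset\Si$ and whose $D$-evaluations, in the sense of \thref{t:RT}, coincide; by the definition of $N(\Si,\VV)$ such a combination is a null graph and hence vanishes in $\Hs(\Si,\VV)$. So the whole proof is reduced to computations inside a disk, where \thref{t:RT} is available. For (1), I would write an isomorphism $f=\{f_\ee\}$ of colorings as a finite composition of elementary moves that change one edge at a time: on an edge $e$, insert a canceling pair of bivalent vertices colored $f_e$ and $f_e^{-1}$ (a valid local replacement because $f_e^{-1}\circ f_e=\id_{V(e)}$), and then absorb each of these into the color of the adjacent vertex via the composition relation of \firef{f:local_rels1}, which precisely replaces $\ph(v)$ by the transported vector; iterating over all edges turns $\Ga$ into the coloring $f\cdot\Ga$. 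For (2), I would cover the ambient isotopy by finitely many isotopies each supported in an embedded disk (possible by compactness of $\Ga$ and local planarity of $\Si$); inside such a disk isotopic graphs have equal evaluations by \thref{t:RT}(1), so successive terms differ by a null graph. Part (3) is the linearity already present in \thref{t:RT}, read inside a disk around the relevant edge or vertex.

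For (4), a colored graph on $\Si_1\sqcup\Si_2$ is exactly a pair of colored graphs, one on each component, so $\VGr(\Si_1\sqcup\Si_2)=\VGr(\Si_1)\otimes\VGr(\Si_2)$; since every embedded disk lies in one component, $N(\Si_1\sqcup\Si_2)=N(\Si_1)\otimes\VGr(\Si_2)+\VGr(\Si_1)\otimes N(\Si_2)$, and passing to quotients yields the decomposition. For (5), the circle relation is just the statement that the $D$-evaluation of a loop colored $X$ equals $d_X\in\kk=\<\varnothing\>$, i.e.\ the definition of the categorical dimension (here $X$ need not be simple). The vanishing relation holds because a subgraph placed in $D$ with a single leg colored by a simple $i\not\simeq\one$ has $D$-evaluation in $\<X_i\>=\Hom_\A(\one,X_i)=0$; by the composition property of \thref{t:RT} one may replace that subgraph by a vertex colored by the zero vector, and by linearity in vertex colors the resulting class is $0$.

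The substantive relation is \eqref{e:local_rels2a}. With $W=V_1\otimes\dots\otimes V_n$, the two sides have $D$-evaluations $\sum_{i\in\Irr(\A)}d_i\sum_\al\ph_\al\cc{i}\ph^\al$ and $\id_W$ respectively, where $\ph_\al\in\<V_1,\dots,V_n\>$ and $\ph^\al\in\<V_n^*,\dots,V_1^*\>$ are the dual bases of \eqref{e:summation_convention}. Equality of these two elements of $\End_\A(W)$ is just the decomposition of $W$ into simple objects: writing $\id_W=\sum_i\sum_\al a_\al\circ b^\al$ with $a_\al\in\Hom_\A(X_i,W)$, $b^\al\in\Hom_\A(W,X_i)$ and $b^\be\circ a_\al=\de_{\al\be}\id_{X_i}$, one identifies $\Hom_\A(X_i,W)\cong\<V_1,\dots,V_n,i^*\>$ and $\Hom_\A(W,X_i)\cong\<i,V_n^*,\dots,V_1^*\>$ and uses that under these identifications the pairing \eqref{e:pairing} equals $d_i$ times the composition pairing $(a,b)\mapsto b\circ a$.

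The one genuine computation is this last comparison --- matching \eqref{e:pairing} with the composition pairing and pinning down the factor $d_i$ (and checking that the chosen square roots $\sqrt{d_i}$ do not intrude). This is a standard partial-trace manipulation with the pivotal structure, of the type carried out in \ocite{BK}. Everything else is bookkeeping: one must keep the chosen disks transversal to the graphs involved and to $\del D$, so that the definition of a null graph applies literally, and unwind the summation and duality conventions \eqref{e:summation_convention}--\eqref{e:dual}.
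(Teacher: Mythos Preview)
Your proposal is correct and follows the same strategy as the paper's proof: reduce every assertion to a computation inside an embedded disk and invoke \thref{t:RT}. The paper's own proof is a single short paragraph that simply says parts (1)--(3) follow from the analogous statements in \thref{t:RT}, part (4) is immediate, relation \eqref{e:local_rels2a} is cited from \ocite{balsam-kirillov}*{Lemma~1.1}, and the remaining two identities in (5) are immediate. So you have supplied strictly more detail than the paper, not a different approach.

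Two small remarks. First, when you write ``$\ph_\al\in\<V_1,\dots,V_n\>$'' in the discussion of \eqref{e:local_rels2a} you mean $\<V_1,\dots,V_n,i^*\>$ (and dually for $\ph^\al$); you correct this in the next sentence, so the argument is fine, but the first sentence is mis-stated. Second, your self-contained verification of \eqref{e:local_rels2a}---decomposing $\id_W$ over simple summands and matching the pairing \eqref{e:pairing} with $d_i$ times the composition pairing---is exactly the standard computation that the paper outsources to the cited lemma; it is a nice addition to have it written out here.
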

\begin{proof}
 Parts (1)---(3) follow from analogous statements for the disk given in
\thref{t:RT}. (4) is immediate from the definition. Equation
\eqref{e:local_rels2a}  is also well-known; a proof can be found, e.g., in
\ocite{balsam-kirillov}*{Lemma 1.1}. The other two identities immediately
follow from the definition.  
\end{proof}

This theorem has an immediate corollary.
\begin{corollary}\label{c:B_p}
Let dashed line stand for the sum of all colorings of an edge by 
simple objects $i$, each taken with coefficient $d_i$:
     \begin{equation} \label{e:regular_color}
          \begin{tikzpicture}
        \draw[regular] (0, -0.5)--(0, 0.5);
       \end{tikzpicture}
      =\sum_{i\in \Irr(\A)} d_i \quad 
      \begin{tikzpicture}
        \draw (0, -0.5)--(0, 0.5) node[pos=0.8, right] {$i$};
       \end{tikzpicture}
     \end{equation}

Then one has the following relations in $\Hs(\Si)$:
\begin{align}
&\begin{tikzpicture}
  \draw[regular] (0,0) circle (0.5);
 \end{tikzpicture}
 \quad =\DD^2\\
& %%%%%%%%%%%%%%%
\begin{tikzpicture}
  \node[morphism] (ph) at (0,-0.5) {$\al$};
  \node[morphism] (psi) at (0,0.5) {$\al$};
  \node at (0,1.1) {$\dots$};
  \node at (0,-1.1) {$\dots$};
  \draw[->] (ph)-- +(-110:1cm) node[pos=1.0,left,scale=0.8] {$V_1$};
  \draw[->] (ph)-- +(-70:1cm) node[pos=1.0,right,scale=0.8] {$V_n$};
  \draw[<-] (psi)-- +(110:1cm) node[pos=1.0,left,scale=0.8] {$V_1$};
  \draw[<-] (psi)-- +(70:1cm) node[pos=1.0,right,scale=0.8] {$V_n$};
  \draw[regular] (ph) -- (psi) node[pos=0.5,left,scale=0.8] {}; 
\end{tikzpicture}
 %%%%%%%%%%%%%%%%%
  =
 %%%%%%%%%%%%%%%%%
  \begin{tikzpicture}
   \node at (0,0) {$\dots$};
   \draw[<-] (-0.3,-1)-- (-0.3,1) node[pos=0.5,left,scale=0.8] {$V_1$};
   \draw[<-] (0.3,-1)-- (0.3,1) node[pos=0.5,right,scale=0.8] {$V_n$};
  \end{tikzpicture}
  %%%%%%%%%%%%%%%%%%
 \qquad\qquad\\
& %%%%%%%%%%%%%%%%
 \begin{tikzpicture}
   \draw[regular] (0,0) circle(0.4cm);
   \path[subgraph] (0,0) circle(0.3cm); 
   \draw (0,1.2)..controls +(-90:0.8cm) and +(90:0.4cm) ..
                   (-0.6, 0) ..controls +(-90:0.4cm) and +(90:0.8cm) ..
                   (0,-1.2);
 \end{tikzpicture}
%%%%%%%%%%%%%%%%%%
\quad=\quad
%%%%%%%%%%%%%%%%%%
  \begin{tikzpicture}
    \draw[regular] (0,0) circle(0.4cm);
    \path[subgraph] (0,0) circle(0.3cm); 
    \draw (0,1.2)..controls +(-90:0.8cm) and +(90:0.4cm) ..
                   (0.6, 0) ..controls +(-90:0.4cm) and +(90:0.8cm) ..
                   (0,-1.2);
  \end{tikzpicture}
\end{align}
The last relation holds  regardless  of the contents  of the shaded region
\textup{(}which can contain arbitrary  graphs or punctures\textup{)}. 
\end{corollary}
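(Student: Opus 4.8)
The plan is to derive each of the three relations from \thref{t:local_rels2} by a purely local computation, using that the dashed edge is by definition (see \eqref{e:regular_color}) the sum $\sum_i d_i$ of the solid edge colored $i$. Since every relation asserts equality in $\Hs(\Si)$ and each pair of graphs differs only inside an embedded disk, it suffices in each case to exhibit the identity as a $\kk$-linear combination of the relations already established in \thref{t:local_rels2}, applied summand by summand over $i\in\Irr(\A)$.

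First I would treat the dashed circle. Expanding the definition, $\bigl\langle\,\text{dashed circle}\,\bigr\rangle = \sum_{i\in\Irr(\A)} d_i \bigl\langle\,\text{circle colored }i\,\bigr\rangle$. By the second displayed relation of \thref{t:local_rels2}(5), the circle colored by the simple object $X_i$ evaluates to $d_i$ (an empty loop carries an implicit vertex colored by the canonical element of $\langle\one\rangle$, or one invokes \thref{t:RT} directly for the closed loop in a disk). Hence the total is $\sum_i d_i^2 = \DD^2$ by the definition \eqref{e:DD} of $\DD$. Second, the ``bubble removal'' identity: expanding the dashed edge joining the two $\al$-vertices gives $\sum_i d_i$ times the left-hand side of \eqref{e:local_rels2a}, which by that very relation equals the $n$ parallel strands on the right; so the dashed version is exactly \eqref{e:local_rels2a} with the dashed notation substituted, nothing more to prove.

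The third relation — sliding a strand past a ``dashed-encircled'' blob — is the one real content, and I expect it to be the main obstacle. The idea: inside the disk we have a dashed circle around some arbitrary subgraph (or puncture), and a strand of the ambient graph passing on one side of it; we must move the strand to the other side. I would first use the composition property of \thref{t:RT} (part (4) of the ``moreover'' list) to absorb everything strictly inside the dashed circle — subgraph, punctures, whatever — into a single coupon, reducing to the universal case where the dashed circle encloses one vertex colored by some $\ph\in\langle W_1,\dots,W_m\rangle$. Next, expand the dashed circle as $\sum_i d_i$ (circle colored $i$); for each $i$, apply the third relation of \thref{t:local_rels2}(5) — a solid circle colored $i\not\simeq\one$ around a blob vanishes — so only the $i=\one$ term survives, with coefficient $d_\one = 1$, and a circle colored $\one$ can be erased (fuse-and-cap using the $k=0$ case of the second local relation in \firef{f:local_rels1} together with the $\coev$ relation). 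This erases the dashed circle entirely; but the same analysis applies verbatim after the strand has been slid across, since the crossing strand was never inside the dashed circle and the subgraph inside is untouched. Both sides therefore equal the blob with no circle and the strand in its (now immaterial) position, and the slide is proved. The subtlety to get right is that the crossing strand genuinely stays outside the shaded disk throughout — this is exactly the hypothesis stated after the display — so that the vanishing lemma \thref{t:local_rels2}(5), which requires the only edge meeting the boundary to be the encircled one, does apply to both configurations.

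One remark on bookkeeping: the second and third relations are stated with the dashed edge/circle appearing literally, so strictly speaking the proof is just ``substitute \eqref{e:regular_color} and invoke \thref{t:local_rels2}'' — the work is entirely in the first and third. For the third I would present the chain of equalities explicitly (expand dashed circle $\to$ kill $i\neq\one$ terms $\to$ erase the $\one$-loop) once, noting it is independent of the shaded region's contents, and observe it applies identically before and after the slide.
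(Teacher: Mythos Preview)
Your treatment of the first two relations is correct; they are indeed direct restatements of \thref{t:local_rels2}(5).

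The argument for the third relation fails. The identity you cite from \thref{t:local_rels2}(5) does not say that a closed loop colored by $i\not\simeq\one$ encircling a region vanishes; it says that an embedded disk with a single \emph{outgoing leg} colored by $i\not\simeq\one$ evaluates to zero (a tadpole relation, reflecting $\Hom_\A(\one,X_i)=0$). A disconnected circle colored $i$ instead contributes $d_i$ by the second identity in \thref{t:local_rels2}(5), so the dashed circle around a contractible region contributes $\DD^2$, not $1$. Your conclusion that ``both sides equal the blob with no circle'' is therefore false --- indeed, if it held, the operators $B_p$ of \thref{t:B_p} would all equal the identity rather than nontrivial projectors. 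There is also a structural obstruction: the corollary explicitly allows the shaded region to contain punctures, and then your opening move (absorbing the interior into a single coupon via the composition property of \thref{t:RT}) is unavailable, since that property requires an embedded disk.

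The paper's proof avoids the interior of the shaded region altogether. It applies the second relation of this very corollary twice, each time in a small disk that straddles the dashed circle but is disjoint from the shaded region: first read right-to-left to insert a pair of dual vertices $\al$ on the dashed circle and attach the passing strand to them, and then read left-to-right to detach the strand on the opposite side. Because the argument takes place entirely outside the shaded region, its contents --- graphs or punctures --- are irrelevant.
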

\begin{proof}
The first two relations are just a rewriting of the relations from
\thref{t:local_rels2}. The final relation   follows by applying the second 
local relation twice as shown below:
$$
%%%%%%%%%%%%
 \begin{tikzpicture}
   \draw[regular] (0,0) circle(0.4cm);
   \path[subgraph] (0,0) circle(0.3cm); 
   \draw (0,1.2)..controls +(-90:0.8cm) and +(90:0.4cm) ..
                   (-0.6, 0) ..controls +(-90:0.4cm) and +(90:0.8cm) ..
                   (0,-1.2);
 \end{tikzpicture}
%%%%%%%%%%%%%
\quad=\quad
%%%%%%%%%%%%%
\begin{tikzpicture}
  \draw[regular] (0,0) circle(0.4cm);
  \node[dotnode, label=45:$\al$] (top) at (0,0.4) {};
  \node[dotnode, label=-45:$\al$] (bot) at (0,-0.4) {};
  \path[subgraph] circle(0.3cm); 
  \draw (0,1.2)--(top) (bot)--(0,-1.2);    
\end{tikzpicture}
%%%%%%%%%%%%%%
\quad=\quad 
%%%%%%%%%%%%%
  \begin{tikzpicture}
    \draw[regular] (0,0) circle(0.4cm);
    \path[subgraph] (0,0) circle(0.3cm); 
    \draw (0,1.2)..controls +(-90:0.8cm) and +(90:0.4cm) ..
                   (0.6, 0) ..controls +(-90:0.4cm) and +(90:0.8cm) ..
                   (0,-1.2);
  \end{tikzpicture}
$$
\end{proof}

The following relation will also be useful in the future.
\begin{lemma}\label{l:edge_crossing}
Let $D_1, D_2$ be two non-intersecting disks. Then for any $V,W\in \Obj
\A$, $i\in \Irr(\A)$ we have the following identity
in $\Hs(D_1\sqcup D_2)$:
$$\<
\begin{tikzpicture}
  \node[morphism] (ph) at (0,0) {$\Phi$};
  \node[morphism] (al) at (1,0) {$\al$};
  \draw[->] (-0.5,0)  -- (ph) node[pos=0.5,above, scale=0.8]{$V$};
  \draw[->]  (ph)--(al) node[pos=0.5,above,scale=0.8] {$W$};
  \draw[->] (al)-- (1.5,0) node[pos=0.5,above,scale=0.8] {$i$};
\end{tikzpicture}
\>_{D_1}\otimes 
\<
\begin{tikzpicture}
  \node[morphism] (al) at (0,0) {$\al$};
  \draw[->] (-0.5,0)  -- (al) node[pos=0.5,above, scale=0.8]{$i$};
  \draw[->] (al)-- (0.5,0) node[pos=0.5,above,scale=0.8] {$W$};
\end{tikzpicture}
\>_{D_2}
=
\<
\begin{tikzpicture}
  \node[morphism] (be) at (0,0) {$\beta$};
  \draw(-0.5,0)  -- (be) node[pos=0.5,above, scale=0.8] {$V$};
  \draw[->]  (be)--(0.5,0) node[pos=0.5,above,scale=0.8] {$i$};
\end{tikzpicture}
\>_{D_1}\otimes 
\<
\begin{tikzpicture}
  \node[morphism] (be) at (0,0) {$\be$};
  \node[morphism] (ph) at (1,0) {$\Phi$};
  \draw[->] (-0.5,0)  -- (be) node[pos=0.5,above, scale=0.8] {$i$};
  \draw[->] (be)-- (ph) node[pos=0.5,above, scale=0.8] {$V$};
  \draw[->]  (ph)--(1.5,0) node[pos=0.5,above,scale=0.8] {$W$};
\end{tikzpicture}
\>_{D_2}
$$
\textup{(}as before, we are using summation convention
\eqref{e:summation_convention}\textup{)}. 
\end{lemma}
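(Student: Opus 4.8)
The plan is to reduce the statement to an elementary identity about dual bases in $\Hom$-spaces of $\A$; the point is that although $D_1$ and $D_2$ are disjoint, the string-net space of each disk is just a finite-dimensional vector space, on which the required identity is pure linear algebra.

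First, for a disk the evaluation map of \thref{t:RT} descends to an isomorphism $\Hs(D,\VV)\isoto\langle V_1,\dots,V_n\rangle$: surjectivity is clear, and injectivity follows from the composition property in \thref{t:RT}, since a linear combination of graphs in $D$ with vanishing evaluation is a null graph (in the sense of the discussion preceding \deref{d:string-net}) for a sub-disk $D'\subset D$ containing the non-trivial parts of all the graphs. Applying this to each of $D_1,D_2$ and using $\Hs(D_1\sqcup D_2)=\Hs(D_1)\otimes\Hs(D_2)$ from \thref{t:local_rels2}(4), it suffices to prove the corresponding identity of vectors in $\langle V^*,i\rangle\otimes\langle i^*,W\rangle$. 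Evaluating each disk with the composition relation of \firef{f:local_rels1} (i.e.\ \eqref{e:composition}) and using the identification $\langle X^*,Y\rangle\simeq\Hom_\A(X,Y)$, this becomes the identity
$$\sum_\al(\ph_\al\circ\Phi)\otimes\ph^\al=\sum_\be\psi_\be\otimes(\Phi\circ\psi^\be),$$
where $\{\ph_\al\}$ is a basis of $\langle W^*,i\rangle\simeq\Hom_\A(W,i)$ with dual basis $\{\ph^\al\}\subset\langle i^*,W\rangle$ for the pairing \eqref{e:pairing}, $\{\psi_\be\}$ a basis of $\langle V^*,i\rangle$ with dual basis $\{\psi^\be\}\subset\langle i^*,V\rangle$, and $\Phi\in\Hom_\A(V,W)$.

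To prove this last identity I would expand $\ph_\al\circ\Phi\in\langle V^*,i\rangle$ in the basis $\{\psi_\be\}$ as $\ph_\al\circ\Phi=\sum_\be(\ph_\al\circ\Phi,\psi^\be)\,\psi_\be$, substitute into the left-hand side, and interchange the summations to obtain $\sum_\be\psi_\be\otimes\big(\sum_\al(\ph_\al\circ\Phi,\psi^\be)\,\ph^\al\big)$. The crucial input is the trace identity $(\ph_\al\circ\Phi,\psi^\be)=(\ph_\al,\Phi\circ\psi^\be)$: both sides equal the categorical trace of the closed morphism $\ph_\al\circ\Phi\circ\psi^\be$, so this is nothing but cyclic invariance of the pairing \eqref{e:pairing}, equivalently isotopy invariance of evaluation (\thref{t:RT}(1)). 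Feeding this in and using the dual-basis expansion $\eta=\sum_\al(\ph_\al,\eta)\,\ph^\al$ for $\eta=\Phi\circ\psi^\be\in\langle i^*,W\rangle$ collapses the inner sum to $\Phi\circ\psi^\be$, which is exactly the right-hand side.

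The only thing that really needs care is bookkeeping: keeping straight which $\langle\cdots\rangle$-space each of $\Phi,\al,\be$ inhabits, the direction in which morphisms compose, and the fact that the two dual-basis expansions involve the pairing \eqref{e:pairing} in the two relevant cyclic orderings. I do not expect any genuine obstacle beyond this; the ``teleportation'' flavour of the lemma is entirely absorbed into the isomorphism $\Hs(D,\VV)\simeq\Hom_\A(\cdots)$, which turns a statement about disjoint disks into the linear-algebra computation above.
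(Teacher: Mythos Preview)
Your proof is correct and follows essentially the same strategy as the paper: reduce to the finite-dimensional $\Hom$-spaces and verify a dual-basis identity. The only cosmetic difference is that the paper decomposes $V=\bigoplus_j V_j\otimes j$, $W=\bigoplus_j W_j\otimes j$ into isotypic components (so that only $j=i$ survives) and then observes that both sides of the resulting identity in $V_i^*\otimes W_i$ equal $d_i^{-1}\Phi_i$ under the identification $V_i^*\otimes W_i\simeq\Hom(V_i,W_i)$; you instead stay at the level of $\Hom_\A$-spaces and use the trace identity $(\ph_\al\circ\Phi,\psi^\be)=(\ph_\al,\Phi\circ\psi^\be)$ to pass $\Phi$ across the tensor. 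These are two packagings of the same canonical-element computation.
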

\begin{proof}
Let us write $V=\bigoplus_j V_j\otimes j$, $W=\bigoplus_j W_j\otimes j$
where $V_j, W_j$ are vector spaces. Clearly, summands with $j\ne i$ give
zero contribution to both sides of equality in the lemma. Thus, it
suffices to prove that for any pair of vector spaces $V_i, W_i$ and a
linear map $\Ph_i\colon V_i\to W_i$, we have
$$
\sum_\al \Ph^*_i(\ph^\al) \otimes \ph_\al
=\sum_\al \psi^\be\otimes \Ph_i(\psi_\be)
\in V_i^*\otimes W_i
$$
where $\ph_\al\in W_i,\ph^\al\in W^*_i$, $\psi_\be\in V_i,\psi^\be\in
V_i^*$ are bases such that $\<\ph_\al,\ph^{\al'}\>
=d_i^{-1}\de_{\al,\al'}$, and similarly for $\psi_\be, \psi^\be$.
This identity is trivial: under identification $V_i^*\otimes W_i\simeq
\Hom(V_i, W_i)$, both sides are identified with  $d_i^{-1}\Ph_i$.
\end{proof}

For future use, we will need to know how the string net space changes 
when we add or remove a puncture. The following lemma, the proof of 
which is left to the reader, is the first step in this direction. 
\begin{lemma}\label{l:puncture}
  Let $\Si'=\Si-p$ be a surface obtained by removing from $\Si$ a single
point $p$. Then the obvious embedding $\Gr(\Si-p)\to
\Gr(\Si)$ descends to an isomorphism
$$
\Hs(\Si)=\Hs(\Si-p)/
\left\langle 
   \begin{tikzpicture}[label distance=-1pt]
   \node[dotnode, label=above:$p$] (v) at (0,0) {};\
   \draw (0,0.6) 
           ..controls +(270:0.3cm) and +(90:0.3cm)..
          (-0.3,0)
            ..controls +(270:0.3cm) and +(90:0.3cm)..
          (0,-0.6);    
   \end{tikzpicture}
\right\rangle 
- \left\langle 
   \begin{tikzpicture}[label distance=-1pt]
   \node[dotnode, label=above:$p$] (v) at (0,0) {};\
   \draw (0,0.6) 
           ..controls +(270:0.3cm) and +(90:0.3cm)..
          (0.3,0)
            ..controls +(270:0.3cm) and +(90:0.3cm)..
          (0,-0.6);    
   \end{tikzpicture}
\right\rangle 
$$
\end{lemma}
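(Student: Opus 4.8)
The plan is to study the natural map $\phi\colon\Hs(\Si-p)\to\Hs(\Si)$ induced by the inclusion $\Gr(\Si-p)\hookrightarrow\Gr(\Si)$; this is well defined because a null graph for a disk $D\subset\Si-p$ is also a null graph for $D\subset\Si$. I want to show $\phi$ is surjective and that its kernel is exactly the subspace $R\subseteq\Hs(\Si-p)$ spanned by all differences obtained by applying the displayed local move in a small disk around $p$ (with arbitrary coloring of the arc and arbitrary surrounding graph). Surjectivity is immediate from general position, since every graph in $\Si$ is isotopic to one disjoint from $p$. The two graphs in the statement agree away from a neighbourhood of $p$ and, once $p$ is filled in, are isotopic in $\Si$, so they have equal class in $\Hs(\Si)$ by \thref{t:local_rels2}(2); hence $R\subseteq\ker\phi$ and $\phi$ factors through a surjection $\bar\phi\colon\Hs(\Si-p)/R\to\Hs(\Si)$.

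The substance is to build a left inverse $\psi$ of $\bar\phi$; since the quotient map is onto, $\psi\bar\phi=\id$ then forces $\bar\phi$ to be an isomorphism. I would define $\psi$ first on a graph $\Ga\subset\Si$ as the class in $\Hs(\Si-p)/R$ of any perturbation of $\Ga$ missing $p$. Independence of the perturbation is the first checkpoint: two perturbations are isotopic in $\Si$, and I would argue that a generic isotopy in $\Si$ between graphs avoiding $p$ factors as a finite alternation of isotopies supported in $\Si-p$ (harmless already in $\Hs(\Si-p)$) and moves in which a single non-vertex point of the graph sweeps transversally across $p$; each move of the latter type agrees, up to isotopy within $\Si-p$, with the displayed slide relation, hence is trivial in $\Hs(\Si-p)/R$.

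Extending $\psi$ linearly to $\VGr(\Si)$, the next checkpoint is that it annihilates every null graph $\mathbf\Ga=\sum c_i\Ga_i$ attached to an embedded disk $D\subset\Si$. If $p\notin D$, I would perturb the common part of the $\Ga_i$ outside $D$ to avoid $p$ without touching $\Ga_i\cap D$, whereupon $\mathbf\Ga$ is an honest null graph for $D\subset\Si-p$. If $p\in D$, I would first perturb the finitely many $\Ga_i$ inside $D$ to miss $p$, then choose a small disk $D_p\ni p$ inside $D$ disjoint from all $\Ga_i$ and an embedded arc $\alpha\subset D$ from $\partial D_p$ to $\partial D$, and use slide moves near $p$ together with isotopies of $\Si-p$ (all supported inside $D$ and fixing $\partial D$) to move each $\Ga_i$ off of $D_p\cup\alpha$. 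These modifications change neither the class in $\Hs(\Si-p)/R$ nor the planar evaluations $\langle\Ga_i\cap D\rangle_D$; afterwards $\tilde D:=D\setminus(D_p\cup\alpha)$ is an embedded disk contained in $\Si-p$, all $\Ga_i$ lie in $\tilde D$ and still agree outside it, and $\sum c_i\langle\Ga_i\cap\tilde D\rangle_{\tilde D}=\sum c_i\langle\Ga_i\cap D\rangle_D=0$ (evaluation is independent of the enclosing disk, by the uniqueness in \thref{t:RT}), so the modified $\mathbf\Ga$ is a null graph for $\tilde D\subset\Si-p$. Thus $\psi$ descends to $\Hs(\Si)$, and $\psi\phi$ is the quotient map on the classes of graphs avoiding $p$, which generate; so $\psi\bar\phi=\id$.

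The hard part, used twice above, is a purely $2$-dimensional fact: a colored graph contained in a once-punctured disk can be pushed off a disk neighbourhood of the puncture together with a radial arc to the outer boundary, using only ambient isotopy and slide moves across the puncture. Equivalently, strands winding around the puncture can be unwound by finitely many slides. I expect this to be the main obstacle and would prove it by induction on the geometric intersection number of the graph with $\alpha$ (taken up to isotopy in $\Si-p$): the intersection point closest to $\partial D_p$, connected to $\partial D_p$ by a thin band along $\alpha$, exhibits a disk around $p$ whose intersection with the graph is a single arc separating $p$ from the rest of the graph, so the slide relation applies there, and one direction of the slide lowers the intersection number after a further isotopy in $\Si-p$; the colorings play no role. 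Pinning down "closest" and verifying that the intersection number genuinely drops is the only delicate point; the remaining arguments are routine general position.
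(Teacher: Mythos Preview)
The paper leaves this lemma to the reader, so there is no proof to compare against. Your argument is correct and is the expected one: establish surjectivity and $R\subset\ker\phi$ directly, then build a section by perturbing off $p$ and check well-definedness on null graphs via the slit-disk trick when $p\in D$. The induction you outline for the ``hard part'' does work; to see that the intersection number with $\alpha$ genuinely drops, observe that in your disk $D''=D_p\cup(\text{band along }\alpha\text{ up to }q_1)$ the arc $\alpha\cap D''$ runs from $p$ to a single point of $\partial D''$, so after the slide you may choose the new arc to pass around $p$ on the side away from $\alpha$ and then push it radially out of $D_p$ on that same side --- this removes the crossing at $q_1$ and introduces no new ones.
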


\begin{corollary}[\ocite{barrett}]
$\Hs(S^2)=\kk$
\end{corollary}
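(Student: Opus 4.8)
The plan is to deduce $\Hs(S^2)=\kk$ from \exref{x:R2} together with \leref{l:puncture}. Recall \exref{x:R2} tells us that $\Hs(\R^2)=\Hs(S^2-\{pt\})=\kk$, with the isomorphism given by the evaluation map $\Ga\mapsto\<\Ga\>$. So it suffices to identify $\Hs(S^2)$ with the quotient of $\Hs(S^2-p)\cong\kk$ described in \leref{l:puncture}, and to check that this quotient is still one-dimensional, i.e. that the element we are quotienting by is zero in $\Hs(S^2-p)$.

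First I would apply \leref{l:puncture} with $\Si=S^2$ and $p$ an arbitrary point, giving
$$
\Hs(S^2)=\Hs(S^2-p)\Big/\left\langle\;\figscale{0.9}{leftcurl}\;-\;\figscale{0.9}{rightcurl}\;\right\rangle,
$$
where the two pictured graphs are a single strand (colored by some object $V$, summed appropriately — but in fact we only need one object, say a simple $i$, or we can work strand-by-strand) passing to the left, resp. to the right, of the puncture $p$, with all other data held fixed. The key point is then that in $S^2-p\cong\R^2$ these two graphs are \emph{isotopic}: since $p$ has been removed, there is no obstruction to sliding the strand across the (now absent) point, so the two configurations are isotopic rel boundary in $\R^2$. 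By \thref{t:RT}(1) (isotopy invariance of $\<\cdot\>_D$), or equivalently by part (2) of \thref{t:local_rels2}, their evaluations agree, hence their classes in $\Hs(\R^2)$ coincide and the difference is the zero vector. Therefore the subspace we are quotienting by is $\{0\}$, and $\Hs(S^2)=\Hs(S^2-p)=\Hs(\R^2)=\kk$.

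The one point requiring a little care — and the main (mild) obstacle — is the precise meaning of the two pictures in \leref{l:puncture}: the strand in question may be an arbitrary edge of an arbitrary graph, colored by an arbitrary object, and "passing left/right of $p$" must be interpreted as: the two graphs agree outside a small disk $D$ around $p$, and inside $D$ look like an arc going around $p$ on one side or the other. Once one fixes such a $D$ and notes that $D\setminus\{p\}$ is an annulus while $D$ itself (inside $S^2-p$, after we have removed $p$) — no: more simply, since $p\notin S^2-p$, any arc in $S^2-p$ near where $p$ used to be can be isotoped freely, so the "left" and "right" arcs are isotopic in $S^2-p$. This is exactly the isotopy invariance already established, so no genuinely new argument is needed; the corollary is a two-line consequence of \exref{x:R2} and \leref{l:puncture}. (This also matches the attribution to \ocite{barrett}, where $\Hs(S^2)=\kk$ is the basic normalization of the Turaev–Viro–Barrett–Westbury state sum.)
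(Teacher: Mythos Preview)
Your framework is the same as the paper's---use \leref{l:puncture} with $\Si=S^2$ together with \exref{x:R2}---but the key step is wrong. You claim that the two graphs in the relation of \leref{l:puncture} are isotopic in $S^2-p$ because ``$p$ has been removed, so there is no obstruction.'' This is backwards: removing $p$ \emph{creates} a puncture, and a strand cannot be slid through a puncture. Concretely, identify $S^2-p\cong\R^2$ by stereographic projection from $p$; then $p$ sits at infinity, and the ``left of $p$'' versus ``right of $p$'' configurations become the left closure versus the right closure of the remaining (compact) subdiagram---exactly the two pictures in \firef{f:spherical}. These are \emph{not} isotopic in $\R^2$: the closing $X$-arc lies in $\R^2\setminus(\text{subgraph})$, which is an annulus, and the two arcs wind around it in opposite ways; equivalently, the resulting oriented $X$-loop has opposite rotation number.

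The paper closes this gap by invoking the \emph{spherical} axiom on $\A$: equality of left and right trace is precisely what guarantees that the two evaluations agree in $\Hs(\R^2)=\kk$, so the relation from \leref{l:puncture} is already zero and $\Hs(S^2)=\kk$. This use of sphericality is essential, not cosmetic---if your isotopy argument were valid it would prove the statement for any pivotal $\A$, making the spherical hypothesis superfluous, which it is not. So the fix is simply to replace ``isotopic, hence equal'' by ``equal in $\Hs(\R^2)$ by the spherical property (\firef{f:spherical}).''
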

\begin{proof}
Since $S^2=\R^2\cup\infty$, it suffices to prove that in $\Hs(\R^2)$, 
we have the relation shown in \firef{f:spherical}. But this is part of  the 
definition of a spherical category.

\begin{figure}[ht]
\begin{tikzpicture}
  \draw (0,-1)--(0,1) arc(0:180:0.5cm) -- (-1,-1) node[left] {$X$} 
           arc (-180:0:0.5cm);
  \path[subgraph] circle(0.6cm); 
\end{tikzpicture}
\quad=\quad
\begin{tikzpicture}
  \draw (0,-1)--(0,1) arc(180:0:0.5cm) -- (1,-1) node[left] {$X$} 
           arc (0:-180:0.5cm);
  \path[subgraph] circle(0.6cm); 
\end{tikzpicture}

\caption{Spherical property}\label{f:spherical}
\end{figure}
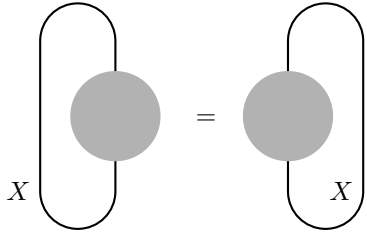

\end{proof}
Of course, this was exactly the motivation for the definition of spherical 
category in \ocite{barrett}. 

Let now $P=\{p_1,\dots, p_k\}$ be a finite collection of distinct points in 
$\Si$. By the \leref{l:puncture}, we have an surjection
$\Hs(\Si-P)\to \Hs(\Si)$. 

\begin{theorem}\label{t:B_p}
Let $P=\{p_1,\dots, p_k\}\subset \Si$. For each point $p\in P$, let 
\begin{equation}\label{e:B_p}
B_p\colon \Hs(\Si-P)\to \Hs(\Si-P)
\end{equation}
be the operator that adds to a colored graph $\Ga$ a small loop around
puncture $p$ colored as  shown in  \firef{f:B_p}. 
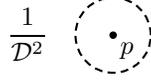
\begin{figure}[ht]
$$
\frac{1}{\DD^2}\quad
%%%%%%%%%%%%%
\begin{tikzpicture}
\node[dotnode, label=-45:$p$] at (0,0) {};
\draw[regular] (0,0) circle(0.5cm);
\end{tikzpicture}
%%%%%%%%%%%%%
$$
\caption{Operator $B_p$}\label{f:B_p}
\end{figure}

\begin{enumerate}
\item Each $B_p$ is a projector: $B_p^2=B_p$.
\item Operators $B_{p_i}$ for different points $p_i$ commute. 
      Thus, the operator $B_P=\prod_{p\in P} B_p\colon \Hs(\Si-P)\to
      \Hs(\Si-P)$ is also a projector.
\item The map $\Hs(\Si-P)\to \Hs(\Si)$ gives an isomorphism
$$
  \im(B_P)=\{\psi\in \Hs(\Si-P)\st B_P\psi=\psi\}\simeq \Hs(\Si)
$$
\end{enumerate}
\end{theorem}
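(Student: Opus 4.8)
The plan is to treat the three assertions in order: (1) and (2) are computations inside the graphical calculus of \thref{t:RT} and \coref{c:B_p}, while (3) is the substantive claim, obtained by matching the defining relation of $B_p$ with the puncture-filling relation of \leref{l:puncture}.

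For (1), observe that for $\mathbf{\Ga}\in\VGr(\Si-P,\VV)$ the graphs $B_p^2\mathbf{\Ga}$ and $B_p\mathbf{\Ga}$ differ only inside a small disk $D_p$ around $p$ meeting no edges of $\mathbf{\Ga}$: the first carries two concentric regular loops around $p$ (with empty annulus between them) and coefficient $\DD^{-4}$, the second a single regular loop with coefficient $\DD^{-2}$. So it suffices to prove the local identity ``two concentric regular loops around $p$ $=\DD^2\cdot$ one regular loop around $p$''. Expanding each loop by \eqref{e:regular_color}, I will use two facts: first, that a loop colored $A$ beside a parallel, like‑oriented loop colored $B$ equals a single loop colored $A\otimes B$ — a consequence of the local relations, since across a small transversal disk a like‑oriented parallel pair and a single $A\otimes B$‑strand both evaluate to $\id_{A\otimes B}$; second, that the evaluation is additive in the edge colors (\thref{t:local_rels2}(3)), so a loop colored $X_i\otimes X_j\cong\bigoplus_k X_k^{\oplus N_{ij}^k}$ decomposes accordingly. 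Combining, the two nested regular loops become $\sum_{i,j}d_id_j\,(\text{loop colored }X_i\otimes X_j)=\sum_k\bigl(\sum_{i,j}d_id_jN_{ij}^k\bigr)(\text{loop colored }k)$, and by Frobenius reciprocity $\sum_j d_jN_{ij}^k=\dim(X_{i^*}\otimes X_k)=d_id_k$, whence $\sum_{i,j}d_id_jN_{ij}^k=d_k\sum_i d_i^2=\DD^2 d_k$; this is exactly $\DD^2$ times the single regular loop, proving $B_p^2=B_p$. For (2): for distinct $p_i$ the loops added by $B_{p_i}$ and $B_{p_j}$ lie in disjoint disks, so these operators commute, and a product of pairwise commuting idempotents is again an idempotent.

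For (3), write $\pi\colon\Hs(\Si-P)\to\Hs(\Si)$ for the natural map. Iterating \leref{l:puncture} shows that $\pi$ is surjective and, more precisely, that $\ker\pi=\sum_{p\in P}\langle R_p\rangle$, where $\langle R_p\rangle$ is the span of differences $\Ga_{\mathrm{left}}-\Ga_{\mathrm{right}}$ of colored graphs agreeing away from a neighborhood of $p$ and differing there only in that one arc passes to the left, resp.\ the right, of $p$. The inclusion $\im(1-B_P)\subseteq\ker\pi$ is clear: the loop $\ell_p$ added by $B_p$ bounds, in $\Si$, an embedded disk containing only the filled‑in point, so by the relation ``regular circle $=\DD^2$'' of \coref{c:B_p} one has $\pi(\mathbf{\Ga}\cup\ell_p)=\DD^2\,\pi(\mathbf{\Ga})$, hence $\pi B_p=\pi$, hence $\pi B_P=\pi$; and $\im(1-B_P)=\sum_p\im(1-B_p)$ because the $B_p$ are commuting idempotents. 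For the reverse inclusion it suffices to check that each generator $\Ga_{\mathrm{left}}-\Ga_{\mathrm{right}}$ of $\langle R_p\rangle$ is killed by $B_p$: after adding $\ell_p$, chosen small enough to lie in the ``bulge'' of the arc so that the arc encircles the disk bounded by $\ell_p$, the two graphs become equal by the sliding relation of \coref{c:B_p}, which lets an edge pass from one side of a regular loop to the other irrespective of what the enclosed disk contains — in particular when it contains the puncture $p$. Thus $\langle R_p\rangle\subseteq\ker B_p=\im(1-B_p)$, so $\ker\pi=\sum_p\langle R_p\rangle\subseteq\im(1-B_P)$, giving $\ker\pi=\im(1-B_P)$. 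Since $\Hs(\Si-P)=\im(B_P)\oplus\im(1-B_P)$ and $\pi B_P=\pi$ is surjective, $\pi$ restricts to an isomorphism $\im(B_P)\isoto\Hs(\Si-P)/\ker\pi=\Hs(\Si)$.

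I expect the main obstacle to be the bookkeeping in (3): making the two inclusions line up forces one to invoke both the ``regular circle $=\DD^2$'' relation (for $\im(1-B_P)\subseteq\ker\pi$) and the sliding relation of \coref{c:B_p} with a puncture inside the shaded disk (for the reverse), and to verify that the loop $\ell_p$ supplied by $B_p$ can always be placed small enough to realize the configuration of that relation. On the computational side, the one delicate point in (1) is the ``parallel loops fuse to the tensor product'' move together with the dimension identity $\sum_{i,j}d_id_jN_{ij}^k=\DD^2 d_k$; the rest is routine.
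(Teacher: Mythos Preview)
Your argument is correct and, for the substantive part (3), follows exactly the paper's line: you show $\pi B_P=\pi$ via the ``regular circle $=\DD^2$'' relation of \coref{c:B_p}, and $B_P(\ker\pi)=0$ via the sliding relation applied to the generators supplied by \leref{l:puncture}, then conclude by the idempotent splitting. This is precisely what the paper does (in more compressed form).

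For part (1), your route is correct but more laborious than what the paper intends. The paper's one-line ``follows from \coref{c:B_p}'' is meant to invoke the \emph{sliding} relation rather than a fusion-rule computation: with two concentric regular loops around $p$, take the inner one as the regular circle in the third relation of \coref{c:B_p} and slide the outer loop's arc across it; the outer loop then bounds a disk in $\Si-P$ containing nothing, so it evaluates to $\DD^2$ by the first relation of \coref{c:B_p}. This avoids the merge-and-decompose step and the identity $\sum_{i,j}d_id_jN_{ij}^k=\DD^2 d_k$ entirely. Your approach has the merit of being self-contained and making the role of semisimplicity explicit, but note that your justification of ``two parallel loops $=$ one $A\otimes B$ loop'' is a bit compressed: the parallel pair and the single strand have different boundary values on the transversal disk (four marked points versus two), so to make the comparison rigorous you must insert fork/unfork vertices, then use \eqref{e:local_rels2a} once more to remove them --- this works, but is not quite the one-line local identity you describe.
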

\begin{proof}
Part (1) follows from \coref{c:B_p}; part (2) is obvious from the definition. 

To prove part (3), denote by $\pi$ the natural map $\Hs(\Si-P)\to 
\Hs(\Si)$. Then it follows from \coref{c:B_p} and \leref{l:puncture} that 
for any $\psi \in \ker(\pi)$, we have $B_P\psi=0$; thus, $B_P$ is well
defined on $\Hs(\Si)$. Using \coref{c:B_p} again, we see that  $B_P$ acts
by identity on $\Hs(\Si)$:  for every $\psi$, we have
$\pi(B_P\psi)=\pi(\psi)$. This proves the lemma. 
\end{proof}

%%%%%%%%%%%%%%%%%%%%%%%%%%%%%%%%%%%%%%%%%%%%%%%%%%%%%%%%%%%%%%
\section{Turaev-Viro model}\label{s:TV}
In this section we recall the definition of Turaev--Viro model for
an arbitrary spherical fusion category $\A$. The definition, given in
\ocite{barrett}, generalizes the original definition  of Turaev and Viro
given in \ocite{TV}. Our exposition follows our earlier paper
\ocite{balsam-kirillov}, to which the reader is referred for details and
proofs. We give an overivew here for reader's convenience. 

Let $\Si$ be an oriented  closed surface and let $\Delta$ be a cell
decomposition of $\Si$. We will consider not just triangulations but more
general cell decompositions, namely PLCW decompositions as defined in
\ocite{PLCW}. Without going into details, it suffices to say here that
2-cells of such a decomposition  are images of $n$-gons (with $n\ge 1$)
mapped  into $\Si$ so that the map is injective on the interior of the
polygon and also injective on the interior of every edge, but is allowed
to identify different edges or different vertices of the same polygon.
An example of such a cell decomposition would be a 2-torus obtained by
gluing together opposite edges of a rectangle. 

From now on, the words ``cell decomposition'' and ``cell complex'' will
stand for PLCW decomposition and PLCW complex as defined in \ocite{PLCW}.

For every such cell decomposition we can define the state space
$$
\HTV(\Si,\De)=\bigoplus_{l}\bigotimes_C H(C,l)
$$
where $l$ is a coloring of edges of $\De$ by simple objects of $\A$, $C$
is a 2-cell of $\Delta$, and
\begin{equation}
H(C,l)=\<l(e_1), l(e_2),\dots,
l(e_n)\>,\qquad
 \del C=e_1\cup e_2\dots\cup e_n
\end{equation}
where the edges $e_1,\dots, e_n$ are taken in the counterclockwise order on
$\del C$ as shown in \firef{f:state_space1}.
\begin{figure}[ht]
%%%%%%%%%%%%%%
\begin{tikzpicture}
\node at (0,0) {$C$};
\foreach \i in {1,...,5}
   {
    \pgfmathsetmacro{\u}{\i*72}
    \pgfmathsetmacro{\v}{\u+72}
    \draw[edge, ->] (\u:1.2cm)--(\v:1.2cm) node[pos=0.5,auto=right]{$X_\i$};
    }
\end{tikzpicture}
%%%%%%%%%%%%%%
\qquad\qquad $H(C)=\<X_1, \dots,  X_n\>
=\Hom_\A(\one, X_1\otimes\dots\otimes X_n)$
\caption{State space for a cell}\label{f:state_space1}
\end{figure}

Next, given a cobordism $M$ between two surfaces $\Si, \Si'$ with cell
decompositions, one can define an operator $Z(M)\colon \HTV(\Si, \De)\to
\HTV(\Si',\De')$; it is defined using a cell decomposition of $M$ but can
be shown to be independent of the choice of the decomposition (see
\ocite{balsam-kirillov}*{Theorem 4.4}). In
particular, taking $M=\Si\times I$, we get an operator $Z(\Si\times
I)\colon \HTV(\Si,\De)\to\HTV(\Si,\De)$ which can be shown to be a
projector. We now define the Turaev--Viro space associated to $\Si$ as
$$
\ZTV(\Si, \De)=\im(Z(M\times I)).
$$
It can be shown that for any two cell decompositions $\De,\De'$ of the same
surface $\Si$, we have a canonical isomorphism $\ZTV(\Si, \De)\simeq
\ZTV(\Si, \De')$ (see \ocite{balsam-kirillov}*{}); thus, this space is
determined just by the surface $\Si$. Therefore, we will omit $\De$ in the
notation, writing just $\ZTV(\Si)$. 
%%%%%%%%%%%%%%%%%%%%%%%%%%%%%%%%%%%%%%%%%%%%%%%%%%%%%%%%%%%%%%
\section{TV=string nets}\label{s:main}
%%%%%%%%%%%%%%%%%%%%%%%%%%%%%%%%%%%%%%%%%%%%%%%%%%%%%%%%%%%%%%
In this section we will prove the first main result of the paper.
\begin{theorem}\label{t:main}
  Let $\Si$ be a closed oriented surface. Then one has a canonical
  isomorphism
  $$
    \Hs(\Si)\simeq\ZTV(\Si).
  $$
\end{theorem}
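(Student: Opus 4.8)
The strategy is to fix a cell decomposition $\De$ of $\Si$ and construct mutually inverse maps between $\HTV(\Si,\De)$ (after projecting to $\ZTV$) and the string-net space $\Hs(\Si)$. The bridge in one direction is almost tautological: a basis state of $\HTV(\Si,\De)$ consists of a coloring $l$ of the edges of $\De$ by simple objects together with a vector $\ph_C\in\langle l(e_1),\dots,l(e_n)\rangle$ for each $2$-cell $C$. Such data is literally a colored graph on $\Si$ — take the $1$-skeleton $\De^{(1)}$ of the cell decomposition as the underlying graph, color its edges by $l$, and at the center of each $2$-cell one would like to place a vertex colored by $\ph_C$; but $\De^{(1)}$ already has vertices where several cells meet, so instead one uses the dual picture and reads off the graph whose vertices sit in the $2$-cells of $\De$ and whose edges cross the edges of $\De$. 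In either incarnation this assignment gives a linear map $\Psi\colon\HTV(\Si,\De)\to\VGr(\Si)\to\Hs(\Si)$. The first thing to check is that $\Psi$ kills the kernel of the projector $Z(\Si\times I)$, so that it descends to $\ZTV(\Si)\to\Hs(\Si)$; this should follow from the fact, provable by the standard Pachner-move / bistellar argument, that $Z(\Si\times I)$ is generated by exactly the kinds of local modifications (bubble moves, $2$--$3$ moves) that are instances of the local relations defining $\Hs$, together with Corollary~\ref{c:B_p} and the normalization factors $\DD$, $d_i$.

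For the reverse map, given a colored graph $\Ga\subset\Si$, isotope it into general position with respect to $\De$ and then use the evaluation of Theorem~\ref{t:RT} locally: inside each $2$-cell $C$ the graph $\Ga\cap C$ is a planar colored tangle with legs on $\del C$, so $\langle\Ga\cap C\rangle$ is a vector, and one obtains an element of $\HTV(\Si,\De)$ by summing over all ways of coloring the edges of $\De$ by simple objects inserted on the arcs of $\Ga$ crossing those edges, each weighted by the appropriate power of $d_i$. Composing with the projector to $\ZTV$ gives a candidate map $\Phi\colon\Hs(\Si)\to\ZTV(\Si)$. One must verify that $\Phi$ is well defined: independence of the isotopy used to put $\Ga$ in general position, and vanishing on null graphs. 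Isotopy invariance reduces to checking what happens when an edge or vertex of $\Ga$ is pushed across an edge of $\De$ — and this is precisely where Lemma~\ref{l:edge_crossing} is used to move a vertex from one cell to an adjacent one — and across a vertex of $\De$, where one invokes the composition property and the relations of Theorem~\ref{t:local_rels2}. Vanishing on null graphs is immediate once one knows $\Phi$ is computed cell-by-cell via $\langle-\rangle_D$, since a null graph is by definition a combination with zero evaluation inside a single disk, and the disk can be taken inside one $2$-cell after an isotopy.

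Finally, one shows $\Phi$ and $\Psi$ are mutually inverse. The composite $\Phi\circ\Psi$ is the identity essentially by construction: applying $\Phi$ to the graph coming from a state $(l,\{\ph_C\})$ recovers $(l,\{\ph_C\})$ because the evaluation of the graph inside cell $C$ is just $\ph_C$ (Theorem~\ref{t:RT}(2)), and the only subtlety is bookkeeping the $d_i$ normalizations so that the colored loops of the $B_P$-projection match. For $\Psi\circ\Phi=\id$ on $\Hs(\Si)$ one takes an arbitrary colored graph $\Ga$, puts it in general position, and must argue that the string net $\Psi(\Phi(\langle\Ga\rangle))$ — which is supported on the $1$-skeleton of $\De$ — equals $\langle\Ga\rangle$ in $\Hs(\Si)$. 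The idea is to subdivide: refine $\De$ until each cell contains only a trivial piece of $\Ga$, use the composition property of Theorem~\ref{t:RT} to see that collapsing each cell back to a vertex does not change the class, and use the local relations (in particular~\eqref{e:local_rels2a} and the $B_p$ relations of Corollary~\ref{c:B_p}) to resolve the inserted sums over simple objects back into $\Ga$ itself.

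\textbf{Main obstacle.} The genuinely delicate point is proving that $\Phi$ is well defined and independent of all choices — concretely, that the cell-by-cell evaluation of a graph, summed over edge-colorings with the $d_i$ weights, is invariant under isotopy of the graph relative to $\De$ and under change of the cell decomposition $\De$ itself. Pushing a strand across an edge of $\De$ changes the local picture in a way that is controlled by Lemma~\ref{l:edge_crossing}, but pushing a vertex of $\Ga$ across a vertex of $\De$, and handling the case where $\Ga$ has several strands crossing one edge of $\De$ simultaneously, requires care with the cyclic-ordering isomorphisms \eqref{e:cyclic} and the pairings \eqref{e:pairing}; getting all the normalization constants ($\DD$ versus $d_i$ versus $\sqrt{d_i}$) to cancel correctly is where the bulk of the technical work lies, and it is also where the agreement with the specific normalization conventions of \ocite{balsam-kirillov} for $Z(\Si\times I)$ has to be matched exactly.
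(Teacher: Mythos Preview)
Your overall architecture --- send a state of $\HTV(\Si,\De)$ to the dual-graph string net, and build the inverse by putting $\Ga$ in general position, cutting along edges of $\De$, and summing over simple edge-colorings with $d_i$ weights --- is exactly the paper's. The inverse map you call $\Phi$ is the paper's map $s$ (proof of \leref{l:main2}), and the use of \leref{l:edge_crossing} for moving a vertex through an edge of $\De$ is the same.

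Where you diverge is in how you handle the ``main obstacle'' you identify: pushing pieces of $\Ga$ across a \emph{vertex} of $\De$, and matching the projector $Z(\Si\times I)$. You propose to do this head-on. The paper instead inserts an intermediate space that makes the obstacle disappear: it removes the vertex set $\De^0$ and works with $\HsD=\Hs(\Si-\De^0)$. On the punctured surface there are no vertices of $\De$ to cross, so only the edge-crossing move is needed, and \leref{l:HsD} together with \leref{l:edge_crossing} gives a clean isomorphism $\HTV(\Si,\De)\simeq\HsD$ (\leref{l:main2}). The passage from $\HsD$ down to $\Hs(\Si)$ is then governed by \thref{t:B_p}: it is the image of the projector $B_{\De^0}=\prod_{p\in\De^0}B_p$. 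The remaining step (\leref{l:main3}) is a single explicit computation showing that under the isomorphism of \leref{l:main2}, $Z(\Si\times I)$ becomes exactly $B_{\De^0}$; the ``side'' edges of the prism cells produce the dashed loops of $B_p$ after contracting the ``outer'' edges via \eqref{e:local_rels2a}. So the paper never proves directly that $\Psi$ kills $\ker Z(\Si\times I)$ or that $\Phi$ is invariant under vertex-crossing; both facts fall out of identifying the two projectors on the already-isomorphic spaces $\HTV$ and $\HsD$.

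Your route is not wrong, but the punctured-surface trick is the organizing idea you are missing: it converts your hardest step into the already-proved \thref{t:B_p}, and it replaces your Pachner-move argument for ``$\Psi$ descends to $\ZTV$'' by the short compatibility check of \leref{l:main1}.
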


The proof of the theorem occupies the rest of this section. Throughout the
proof, we assume that $\Si$ is closed (i.e., it is compact without
boundary). 

We begin by choosing a cell decomposition $\De$ of $\Si$. Then we have a
natural map $\pi_\De\colon \HTV(\Si, \De)\to \Hs(\Si)$
defined as follows. Let $\Ga_\De$ be the dual graph of $\De$. Then each 
coloring $l$ of edges of $\De$ defines a coloring of edges of $\Ga_\De$, and 
for every 2-cell $C$ of $\De$,  a vector $\Ph\in H(C,l)$ defines a coloring 
of the vertex $v$ of $\Ga_\De$ corresponding to $C$, as shown in 
\firef{f:dual_graph}; thus, for a fixed choice of colors $l$ of edges, 
every vector  $\Psi\in \bigotimes_{C}H(C,l)$ defines a 
coloring of $\Ga_\De$ which we will  denote by  $\tilde \Psi$.

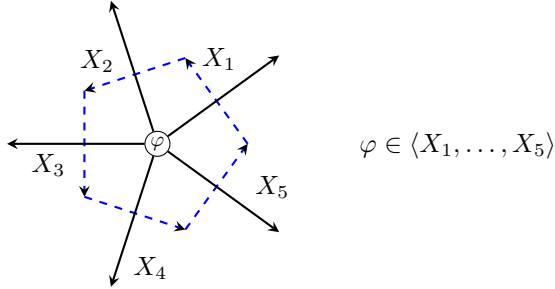
\begin{figure}[ht]
%%%%%%%%%%%%%%
\begin{tikzpicture}
\node[small_morphism] (O) at (0,0) {$\ph$};
\foreach \i in {1,...,5}
   {
    \pgfmathsetmacro{\u}{(\i-1)*72}
    \pgfmathsetmacro{\v}{\u+72}
    \pgfmathsetmacro{\w}{\u+36}
    \draw[edge, ->] (\u:1.2cm)--(\v:1.2cm);
    \draw[->] (O)--(\w:2cm)  node[pos=0.7,auto=left]{$X_\i$};
    }
\end{tikzpicture}
%%%%%%%%%%%%%%
\qquad 
$\ph\in \<X_1,\dots, X_5\>$
\caption{Coloring of the dual graph.}\label{f:dual_graph}
\end{figure}

 Define now the map $\pi_\De$ by 
 \begin{equation}\label{e:TVtoHS}
\begin{aligned}
\pi_\De\colon \HTV(\Si, \De)&\to \Hs(\Si)\\
           \Psi&\mapsto (\Ga_\De, \sqrt{d_l} \tilde \Psi), \qquad 
             \sqrt{d_l}=\prod_{e} d_{l(e)}^{\tfrac{1}{2}}
\end{aligned}
\end{equation}
where the product is over all (unoriented)  edges $e$ of $\De$ and
$d_{l(e)}$ is the dimension of the color $l(e)$ (it does not depend on the
choice of orientation).   

The map \eqref{e:TVtoHS} can be rewritten as follows. Let $\De^0$ 
be the set of vertices of the cell decomposition $\De$  and let 
$$
\HsD=\Hs(\Si-\De^0).
$$
Then we have a natural surjective map $\HsD\to \Hs(\Si)$ (see
\leref{l:puncture}), and the map \eqref{e:TVtoHS} can be written as  the
composition 
\begin{equation}\label{e:TVtoHS2}
\HTV(\Si,\De)\to \HsD\to \Hs(\Si).
\end{equation}

Now \thref{t:main} follows immediately follows from the three lemmas below. 

\begin{lemma} \label{l:main1}
  Let $\De, \De'$ be two different cell decompositions of
  $\Si$, and $f_{\De,\De'}=Z(\Si\times I)\colon H(\Si,\De)\to H(\Si,\De')$ 
  be the canonical linear map defined by the cylinder $\Si\times I$ 
  with any cell decomposition extending $\De,\De'$ on the boundary 
  \textup{(}see\ocite{balsam-kirillov}*{Theorem 4.4}\textup{)}. Then 
  the following  diagram is commutative:
$$
\begin{tikzpicture}
\node (a) at (0, 0) {$H(\Si,\De)$};
\node (b) at (0, -2) {$H(\Si,\De')$};
\node (c) at (3, -1) {$\Hs(\Si)$};
\draw[->, >=latex] (a)--(b) node[pos=0.5,left] {$f_{\De,\De'}$};
\draw[->, >=latex] (a)--(c) node[pos=0.5,above] {$\pi_{\De}$};
\draw[->, >=latex] (b)--(c) node[pos=0.5,above] {$\pi_{\De'}$};
\end{tikzpicture}
$$
\end{lemma}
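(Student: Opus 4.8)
The plan is to reduce the statement to a finite check on the elementary moves relating cell decompositions, and then verify each such case using the local relations already collected in \thref{t:local_rels2} and \coref{c:B_p}. First I record two structural facts. The maps $f_{\De,\De'}=Z(\Si\times I)$ are well defined (independent of the interpolating decomposition of $\Si\times I$) and behave functorially: $f_{\De,\De}=\id$ and $f_{\De',\De''}\circ f_{\De,\De'}=f_{\De,\De''}$, since stacking two copies of $\Si\times I$ along $\Si$ again gives $\Si\times I$ and $Z$ is a functor on cobordisms (\ocite{balsam-kirillov}*{Theorem 4.4}); moreover any two PLCW decompositions of a surface are connected by a finite sequence of elementary moves (\ocite{PLCW}). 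Because the desired equality $\pi_{\De'}\circ f_{\De,\De'}=\pi_\De$ is itself stable under composition — if it holds for $(\De,\De')$ and for $(\De',\De'')$ then $\pi_{\De''}\circ f_{\De,\De''}=\pi_{\De''}\circ f_{\De',\De''}\circ f_{\De,\De'}=\pi_{\De'}\circ f_{\De,\De'}=\pi_\De$ — it suffices to treat the case in which $\De'$ is obtained from $\De$ by a single elementary move.

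Next I would describe each elementary move on both sides. On the Turaev--Viro side, when $\De'$ differs from $\De$ by one move the operator $f_{\De,\De'}$ is computed from the state sum on a cell decomposition of $\Si\times I$ that agrees with the product decomposition away from a neighborhood of the affected cells, and it is therefore a completely local matrix carrying explicit $d_i$ and $\DD^{\pm1}$ normalization factors: an edge subdivision sends a coloring with $l(e)=i$ to the coloring of the two new half-edges by $i$ together with the new bivalent vertex colored by a multiple of $\id_i$; a subdivision of a face by a new edge is (up to normalization by the $d_i$) the natural map $\<X_1,\dots,X_n\>\to\bigoplus_{i\in\Irr(\A)}\<X_1,\dots,X_k,i\>\otimes\<i^*,X_{k+1},\dots,X_n\>$ associated to the composition \eqref{e:composition}; and the remaining moves (those producing or removing $1$-gons, and the inverses of all of the above) are of the same type. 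On the string-net side, the dual graph $\Ga_\De$ changes only inside a small embedded disk $D\subset\Si$ and is unchanged outside it, so by the definition of $\Hs$ as formal combinations of colored graphs modulo null graphs it is enough to check, for the boundary value $\VV=\Ga\cap\del D$, that after multiplying by the per-edge factors $\prod_e d_{l(e)}^{1/2}$ from \eqref{e:TVtoHS} the two local configurations represent the same element of $\Hs(D,\VV)$. This is a direct application of the relations already in hand: the edge-subdivision case amounts to erasing the bivalent vertex colored $\id_i$, which is immediate from \thref{t:RT} (the $\coev$ and composition relations of \firef{f:local_rels1}); the face-subdivision case is precisely relation \eqref{e:local_rels2a} read with the summation convention \eqref{e:summation_convention}; and the $1$-gon moves reduce to the $\DD^2$ normalization and the vanishing relations of \coref{c:B_p} and part (5) of \thref{t:local_rels2}.

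The step I expect to be the main obstacle is the bookkeeping of normalization constants. One must be scrupulous about how the factor $\prod_e d_{l(e)}^{1/2}$ changes when an edge is created or destroyed, and then check that together with the $d_i$-weights and $\DD^{\pm1}$-factors in the Turaev--Viro elementary-move operators (\ocite{balsam-kirillov}) and with those in the string-net relations (\thref{t:local_rels2}, \coref{c:B_p}), everything cancels to give an exact equality rather than equality up to a scalar. The choice of square roots $\sqrt{d_i}$ with $\sqrt{d_\one}=1$ and $\sqrt{d_i}=\sqrt{d_{i^*}}$ is exactly what is arranged to make this possible, but confirming it move by move is where essentially all the work lies. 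A secondary, lower-stakes point is to pin down a convenient complete generating set of elementary moves for PLCW complexes and to confirm that the dual-graph construction is compatible with each of them — in particular that the bivalent vertices and doubled edges appearing in $\Ga_{\De'}$ are legitimate colored graphs and are correctly absorbed by the relations above — after which the lemma follows by the composition argument of the first paragraph.
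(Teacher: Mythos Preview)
Your approach is essentially the same as the paper's: reduce via \ocite{PLCW} to elementary moves and check each one by a local computation using the relations of \thref{t:local_rels2}. The paper's proof is a two-sentence sketch invoking exactly this strategy (with the moves named M1, erasing a valency-$2$ vertex, and M2, erasing an edge between distinct cells), so your more detailed outline is a faithful expansion of it; the only unnecessary addition is your ``$1$-gon moves,'' which are not part of the PLCW generating set and can be dropped.
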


%%%%%%%%%%%%%%%%%%%%%%%%%
\begin{lemma}\label{l:main2}
   The map $\HTV(\Si,\De)\to \HsD$ defined by  \eqref{e:TVtoHS2} is an
   isomorphism.
\end{lemma}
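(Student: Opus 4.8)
The plan is to establish the isomorphism $\HTV(\Si,\De) \to \HsD = \Hs(\Si - \De^0)$ by constructing an explicit inverse. Recall that $\HTV(\Si,\De) = \bigoplus_l \bigotimes_C H(C,l)$, so I want to show that the string-net space of the punctured surface $\Si - \De^0$ is spanned by graphs ``parallel to the dual graph $\Ga_\De$'' and that the coloring data of such graphs matches $\bigoplus_l \bigotimes_C H(C,l)$ precisely.

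First I would show that the map is surjective. Given any colored graph $\Ga$ in $\Si - \De^0$, I use isotopy and the local relations of \thref{t:RT} to push it into a standard position relative to the cell decomposition: since each open 2-cell of $\De$ is a disk, inside each such disk the part of $\Ga$ lying there can be replaced (by the composition property, part (7) of \thref{t:RT}) by a single vertex joined by ``strands'' to the boundary of the cell, where each strand crosses one edge of $\De$. Then, within a collar of each edge $e$ of $\De$, the collection of strands crossing $e$ can be fused into a single edge colored by the tensor product (using the second local relation of \firef{f:local_rels1}), and this edge can be decomposed into a sum over simple objects $i \in \Irr(\A)$ using \eqref{e:local_rels2a}. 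Near each vertex $v \in \De^0$ — which is a puncture of $\Si - \De^0$, so small loops around it are \emph{not} available — one must check that after this procedure no strand encircles $v$; this follows because the dual graph $\Ga_\De$ has exactly one vertex in each 2-cell and its edges cross each edge of $\De$ once, avoiding a neighborhood of each $v$. The upshot is that $\HsD$ is spanned by colored graphs supported on (a graph isotopic to) $\Ga_\De$ with edge colors simple, and such a graph is exactly the image $\tilde\Psi$ of some $\Psi \in \bigoplus_l \bigotimes_C H(C,l)$; the normalization factor $\sqrt{d_l} = \prod_e d_{l(e)}^{1/2}$ in \eqref{e:TVtoHS} is just a convenient rescaling and does not affect surjectivity.

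Next I would show injectivity, which is the part I expect to be the main obstacle. One must show that the only relations among the standard graphs $\tilde\Psi$ are the linear ones already present in $\bigoplus_l \bigotimes_C H(C,l)$ — in other words, that none of the local (null-graph) relations defining $\Hs(\Si - \De^0)$, when applied to a graph in standard position, produces a nontrivial relation beyond multilinearity of the vertex colorings. The clean way to do this is to produce a left inverse: define a linear map $\HsD \to \HTV(\Si,\De)$ directly on colored graphs, by choosing a disk around each vertex $v$ of $\Ga_\De$ (i.e. each 2-cell $C$ of $\De$) and evaluating the part of the graph inside that disk via \thref{t:RT} to land in $H(C,l)$, after first using \eqref{e:local_rels2a} to arrange that the strands crossing each dual edge carry simple colors. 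The content is to verify that this assignment (a) is well defined on linear combinations, (b) kills every null graph — here one uses that a null-graph relation localized in an embedded disk $D$ can, by the composition property and the fact that each 2-cell and each edge-collar is itself a disk, be reduced to a local relation inside a single cell or a single edge-collar, where \thref{t:RT} guarantees it maps to zero in $H(C,l)$ — and (c) composes with $\pi_\De$ to the identity on $\HTV(\Si,\De)$, which is immediate from property (2) of \thref{t:RT} (a single vertex colored by $\ph$ evaluates to $\ph$). The hardest step is (b): making precise that an arbitrary embedded disk can be slid, via isotopy and the local relations already known to hold in $\HsD$, into a position compatible with the cell structure; this is essentially a transversality-plus-normal-form argument about graphs on surfaces, and it is where one genuinely uses that we have punctured out $\De^0$ so that the ``bubble'' and ``dead-end'' relations of \thref{t:local_rels2}(5) are \emph{not} imposed around vertices (so that the Turaev–Viro state space, which does not quotient by such relations, is matched exactly rather than collapsed further).

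Finally I would package the two directions: $\pi_\De$ composed with the constructed left inverse is the identity on $\HTV(\Si,\De)$ by property (2) of \thref{t:RT}, and the left inverse composed with $\pi_\De$ is the identity on $\HsD$ by the normal-form result of the surjectivity step (every class in $\HsD$ is represented by a standard graph $\tilde\Psi$, and the left inverse recovers $\Psi$ from it, up to the $\sqrt{d_l}$ normalization which we build into one of the two maps). Hence $\pi_\De$ is an isomorphism. I would remark that the normalization $\sqrt{d_X} = \sqrt{d_{X^*}}$ fixed in \seref{s:LW1} is what makes the edge factors $d_{l(e)}^{1/2}$ well defined independently of edge orientation, so the map \eqref{e:TVtoHS} is genuinely canonical.
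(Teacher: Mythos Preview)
Your approach is essentially the paper's: construct an explicit inverse $\HsD\to\HTV(\Si,\De)$ by putting graphs in normal form relative to $\De$ and reading off the cell-by-cell evaluations. The paper organizes this more cleanly by first isolating a presentation lemma (\leref{l:HsD}): $\HsD$ equals the space $\VGr_\De$ of colored graphs \emph{transversal} to the edges of $\De$, modulo (i) local relations inside each open 2-cell and (ii) the single move of sliding a vertex of $\Ga$ through an edge of $\De$. With that presentation, the inverse $s$ is defined on $\VGr_\De$ directly: for each simple edge-coloring $l$, insert a dual-basis pair at every crossing with an edge of $\De$ (exactly your use of \eqref{e:local_rels2a}) to obtain a graph $\Ga^l$ meeting each edge once with color $l(e)$, then evaluate in each cell. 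That $s\circ\pi_\De=\id$ and $\pi_\De\circ s=\id$ both follow from \eqref{e:local_rels2a}, as you anticipated.

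Your step (b), however, has a gap. You assert that a null relation supported in an edge-collar ``maps to zero in $H(C,l)$'' by \thref{t:RT}, but an edge-collar straddles \emph{two} cells $C,C'$, and the relation in question is not a null graph in a single disk: it is the compatibility of $s$ under sliding a vertex from $C$ to $C'$ across the edge (relation (ii) above). This is not automatic from \thref{t:RT}; it is precisely the content of \leref{l:edge_crossing}, which says that inserting a dual-basis pair $\al,\al$ on an $i$-colored edge commutes with composing by an arbitrary morphism $\Phi$ on either side. The paper invokes \leref{l:edge_crossing} explicitly at this point to conclude that $s$ descends to $\HsD$; without it (or an equivalent computation), your left-inverse is not shown to be well defined.
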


%%%%%%%%%%%%%%%%%%%%%%%%%
\begin{lemma}\label{l:main3}
   The isomorphism  $\HTV(\Si,\De)\to \HsD$ defined by  
   \eqref{e:TVtoHS2} identifies the operator 
   $A=Z(\Si\times I)\colon \HTV(\Si,\De)\to \HTV(\Si,\De)$ with the operator 
   $$
   B_{\De^0}=\prod_{p\in \De^0} B_p\colon  \HsD\to \HsD,
   $$ 
   where 
   $\De^0$ is the set of vertices of $\De$ and operators $B_p$ are defined
   in \thref{t:B_p}.   
   \end{lemma}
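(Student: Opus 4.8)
The plan is to compare the two operators cell-by-cell, using the fact that both sides are built by a local gluing procedure and that $\HsD$ has an explicit graphical presentation via the dual graph $\Ga_\De$. First I would unwind the definition of $A=Z(\Si\times I)$ following \ocite{balsam-kirillov}: choosing the standard prism cell decomposition of $\Si\times I$ (one prism over each 2-cell of $\De$, matching $\De$ on the bottom and top), the operator $A$ is a sum over colorings of the \emph{internal} edges of this decomposition of a product of local tensors $H(C,l)$-pairings, one for each cell of $\Si\times I$. The internal edges are the ``vertical'' edges joining a bottom vertex of $\De$ to the corresponding top vertex; there is exactly one such vertical edge for each $p\in\De^0$. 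I would then transport this picture through the isomorphism $\pi$ of \leref{l:main2}: under $\pi$, a coloring $l$ of edges of $\De$ together with vertex data becomes the colored dual graph $(\Ga_\De,\sqrt{d_l}\,\ti\Psi)$, so the sum defining $A$ becomes a sum over all ways of inserting, near each puncture $p$, a colored configuration coming from the vertical edge over $p$.

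The key step is to identify that local insertion with the operator $B_p$ of \thref{t:B_p}. Concretely: fix a puncture $p$ and a small disk around it. The prism edges ``surrounding'' the vertical edge over $p$, together with the faces of the prisms adjacent to that vertical edge, assemble (after applying the evaluation $\<\cdot\>_{D}$ of \thref{t:RT} inside that disk, which is legitimate by the composition property) into exactly the graph of \firef{f:B_p}: a loop around $p$ colored by $\sum_i d_i\, i$ — that is, a regular-colored circle — with the normalizing factor $\DD^{-2}$ supplied by the standard normalization of $Z(\Si\times I)$ in \ocite{balsam-kirillov}. Here I expect to lean on \coref{c:B_p} and on \thref{t:local_rels2}, part (5): the relation $\sum_i d_i\cdot(\text{bubble on }\al)=\id$ is precisely what makes the vertical-edge contractions collapse correctly, and it also explains why the $\sqrt{d_l}$ normalization in \eqref{e:TVtoHS} is the right one — each horizontal edge of $\De$ borders two prisms and acquires $d_e^{1/2}$ from each, while the vertical edges produce the loop-coloring weights $d_i$. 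Since distinct punctures involve disjoint disks, the local identifications are independent, and multiplying them gives $B_{\De^0}=\prod_{p}B_p$.

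The main obstacle will be the careful bookkeeping of normalization factors and orientations: one must check that the $\sqrt{d_l}$ twist in the definition of $\pi_\De$, the $\DD^{-2}$ per vertex in $B_p$, and the normalization conventions of $Z(\Si\times I)$ from \ocite{balsam-kirillov} all conspire so that no stray powers of $d_i$ or $\DD$ survive — this is where the hypothesis $\sqrt{d_X}=\sqrt{d_{X^*}}$ and $\sqrt{d_\one}=1$ get used. A secondary subtlety is that the cell decomposition of $\Si\times I$ used to compute $A$ is not literally the prism decomposition but must be shown to give the same answer; here I would simply invoke the decomposition-independence of $Z(M)$ (\ocite{balsam-kirillov}*{Theorem 4.4}) to reduce to the prism case. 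Once the single-vertex computation is pinned down, the global statement follows formally, since both $A$ and $B_{\De^0}$ are, under $\pi$, products over $p\in\De^0$ of commuting local operators that agree factor by factor.
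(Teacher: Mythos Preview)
Your proposal is correct and follows essentially the same route as the paper: both use the product (prism) cell decomposition of $\Si\times I$, compute $\pi_\De(A\ph)$ explicitly as a dual-graph string net, and then contract using relation \eqref{e:local_rels2a} to obtain the $B_p$-loops around the punctures. The paper makes this concrete by identifying three types of edges in the intermediate graph (``inner'', ``outer'', ``side'') and showing that contracting the outer edges yields exactly $(\prod_p B_p)\pi_\De(\ph)$; your description of the prism faces assembling into the loop is the same computation phrased more abstractly.
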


Combining these lemmas with the result of \thref{t:B_p}, 
we get the statement of \thref{t:main}. 

We now proceed to the proofs of the three lemmas above.

\begin{proof}[Proof of \leref{l:main1}]
  By the results of \ocite{PLCW}, any two cell decompositions of $\Si$ are
related by a sequence of elementary moves M1 (erasing a vertex of valency
2) and M2 (erasing an edge separating two different cells). Thus, it
suffices to prove the result in these two special cases, where it is an
easy explicit computation.  %FIXME
\end{proof}

\begin{proof}[Proof of \leref{l:main2}]
We will prove the theorem by constructing an inverse map 
$\HsD\to \HTV(\Si,\De)$. To do that, we first give a different description 
of $\HsD$.

\begin{lemma}\label{l:HsD}
Let $\VGr_\De$ be the space of formal linear combinations of colored graphs 
in $\Si-\De^0$ which are transversal to edges of $\De$ \textup{(}i.e., no vertex 
of $\Ga$ is on an edge of $\De$, and all edges of $\Ga$ intersect edges 
of $\De$ transversally\textup{)}. Then 
$$
\HsD=\VGr_\De/N
$$ 
where $N$ is the subspace generated by 
\begin{itemize}
  \item Local relations inside each 2-cell of $\De$
  \item Local move of moving a vertex through an edge of $\De$ as shown
    in \firef{f:HsD}.
  \begin{figure}[ht]

$$
%%%%%%%%%%%%%%%%%
\begin{tikzpicture}
\node[dotnode] (ph) at (0,0) {};
\draw[->] (ph)-- +(220:1cm);
\draw[->] (ph)-- +(180:1cm);
\draw[->] (ph)-- +(140:1cm);
\draw[->] (ph)-- +(40:1cm);
\draw[->] (ph)-- +(0:1cm);
\draw[->] (ph)-- +(-40:1cm);
\draw[edge] (0.2, -1)--(0.2,1);
\end{tikzpicture}
%%%%%%%%%%%%%%%%
=
%%%%%%%%%%%
\begin{tikzpicture}
\node[dotnode] (ph) at (0,0) {};
\draw[->] (ph)-- +(220:1cm);
\draw[->] (ph)-- +(180:1cm);
\draw[->] (ph)-- +(140:1cm);
\draw[->] (ph)-- +(40:1cm);
\draw[->] (ph)-- +(0:1cm);
\draw[->] (ph)-- +(-40:1cm);
\draw[edge] (-0.2, -1)--(-0.2,1);
\end{tikzpicture}
%%%%%%%%%%%%
$$
    \caption{A defining relation for $\HsD$}\label{f:HsD}
  \end{figure}
\end{itemize}
\end{lemma}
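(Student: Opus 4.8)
The plan is to identify $\HsD = \Hs(\Si - \De^0)$, as defined directly via $\VGr(\Si-\De^0)$ modulo \emph{all} null graphs (over all embedded disks $D \subset \Si-\De^0$), with the more economical presentation $\VGr_\De / N$. First I would set up the comparison map. Since every colored graph transversal to $\De$ is in particular a colored graph in $\Si-\De^0$, there is an obvious linear map $\VGr_\De \to \VGr(\Si-\De^0)$; I claim it descends to an isomorphism on the stated quotients. For this I need two things: (a) the relations generating $N$ (local relations inside 2-cells, plus the vertex-through-edge move of \firef{f:HsD}) hold in $\Hs(\Si-\De^0)$, so the map is well defined; and (b) the induced map is bijective.

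Claim (a) is the easy direction. Local relations inside a 2-cell are null graphs supported in a disk contained in that cell, hence zero in $\Hs$. The vertex-through-edge move of \firef{f:HsD} is not literally a local relation of \thref{t:RT}, but it follows from isotopy invariance in $\Hs(\Si-\De^0)$: the two pictures differ by an isotopy of $\Si - \De^0$ that slides the vertex across the edge $e$ of $\De$ (legal precisely because $e$ has both endpoints removed, so there is room to push the vertex around or past $e$ inside $\Si-\De^0$), and isotopic graphs have the same class by \thref{t:local_rels2}(2). So the map $\VGr_\De/N \to \Hs(\Si-\De^0)$ is well defined.

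For surjectivity: given an arbitrary colored graph $\Ga$ in $\Si-\De^0$, a small isotopy makes it transversal to the $1$-skeleton of $\De$, and one can push all interior vertices of $\Ga$ off the edges of $\De$ into the interiors of $2$-cells; this produces a representative in $\VGr_\De$ with the same class in $\Hs$, using again isotopy invariance. Injectivity is the substantive point and will be the main obstacle. I need to show that any null graph in $\Si-\De^0$ (coming from an arbitrary embedded disk $D$) already lies in $N$ modulo the relations we have allowed. The strategy is to reduce a general disk $D$ to one contained in a single $2$-cell of $\De$. Given $D$, consider its intersection with the $1$-skeleton of $\De$; by a standard general-position/innermost-arc argument one subdivides the null-graph relation supported on $D$ into a sum of relations each supported in a disk meeting the $1$-skeleton in at most an arc, and then, using the composition property of $\<\ \>$ from \thref{t:RT} and the vertex-through-edge move to normalize how edges of $\Ga$ cross edges of $\De$, one further localizes each piece inside a single open $2$-cell. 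There the relation is exactly a local relation inside that cell, hence in $N$. Assembling these reductions — i.e.\ checking that ``cut $D$ along the $1$-skeleton, apply composition, push everything into cells'' is faithful at the level of $\VGr_\De/N$ — is where the real work lies; the rest is routine.
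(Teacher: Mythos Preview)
The paper does not actually prove this lemma: immediately after the statement it says ``Proof of this lemma is left to the reader as an easy exercise.'' So there is no argument to compare against, and your outline is a sensible way to carry out that exercise; the two-step structure (well-definedness and surjectivity are soft, injectivity via cutting $D$ along the $1$-skeleton is the real content) is exactly right.

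One small quibble on your justification of the vertex-through-edge move. You write that it is ``legal precisely because $e$ has both endpoints removed, so there is room to push the vertex around or past $e$ inside $\Si-\De^0$.'' The removal of endpoints is irrelevant here: since only the $0$-cells $\De^0$ are deleted, the \emph{interior} of every edge of $\De$ still lies in $\Si-\De^0$, and the two pictures in \firef{f:HsD} differ by an isotopy that pushes the graph vertex straight \emph{through} the interior of $e$. No detour ``around'' $e$ is needed. This also tells you how to handle, in your injectivity step, isotopies that create or cancel a pair of intersections of an edge of $\Ga$ with an edge of $\De$: insert a $2$-valent vertex (a local relation in a cell), push it across (the move of \firef{f:HsD}), and remove it (again a local relation in a cell).

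For injectivity, a slight repackaging of your innermost-arc argument may be cleaner: first prove the auxiliary claim that any two isotopic transversal graphs already agree in $\VGr_\De/N$ (decompose the isotopy into small moves, each either supported in a single $2$-cell or a single vertex-through-edge move as above). Then, given a null combination supported on a disk $D\subset\Si-\De^0$, use an ambient isotopy of $\Si-\De^0$ to shrink $D$ into a single $2$-cell; by the auxiliary claim this changes each $\Ga_i$ only by elements of $N$, and the resulting null combination is now a local relation inside one cell. This avoids having to track the combinatorics of how $D$ meets the $1$-skeleton.
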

Proof of this lemma is left to the reader as an easy exercise.

We now define the inverse map $\HsD\to \HTV(\Si,\De)$ as follows. Let $\Ga$
be a  colored graph  satisfying the conditions of \leref{l:HsD}. For any
simple coloring $l$ of edges of $\De$, let $\Ga^l$ be the the formal linear
combination of colored graphs obtained  from $\Ga$ by replacing, for every
edge $e$ of  $\De$, all edges of $\Ga$ crossing $e$ by a single edge
colored by $l(e)$, as shown in \firef{f:edge_tr}.   
 %%%%%%%%%%%%%%%%%%%%%%%%%%
\begin{figure}[ht]
$$
%%%%%%%%%%%%%
\begin{tikzpicture}
\draw[,->] (-1.2,0.3)--(1.2,0.3)  node[pos=1.0,above,scale=0.8] {$V_1$};
\draw[,->] (-1.2,-0.3)--(1.2,-0.3) node[pos=1.0,above,scale=0.8] {$V_n$};
\draw[edge] (0,1)--(0,-1);
\end{tikzpicture}
%%%%%%%%%%%%%%
\qquad \mapsto \qquad
%%%%%%%%%%%%%%%
\begin{tikzpicture}
\node[morphism] (ph) at (-0.7,0) {$\al$};
\node[morphism] (psi) at (0.7,0) {$\al$};
\draw[<-] (ph)-- +(150:1cm) node[pos=1.0,left,scale=0.8] {$V_1$};
\draw[<-] (ph)-- +(210:1cm) node[pos=1.0,left,scale=0.8] {$V_n$};
\draw[->] (psi)-- +(30:1cm) node[pos=1.0,right,scale=0.8] {$V_1$};
\draw[->] (psi)-- +(-30:1cm) node[pos=1.0,right,scale=0.8] {$V_n$};
\draw (ph) -- (psi) node[pos=0.6,above]  {$l(e)$};
\draw[edge] (0,1)--(0,-1);
\end{tikzpicture}
%%%%%%%%%%%%%%
$$
\caption{Transformation $\Ga\mapsto\Ga^l$.}\label{f:edge_tr}
\end{figure}
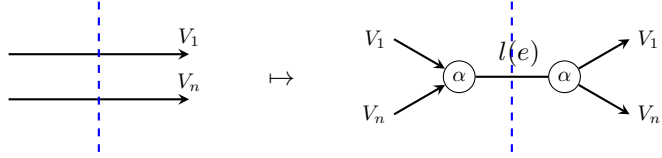

It is immediate from definition that if we consider the intersection 
$\Ga^l\cap C$, where $C$ is a 2-cell of $\De$, then the expectation value  
$$
\<\Ga^l\cap C\>_C\in \<l(e_1), \dots, l(e_n)\>=H(C,l)
$$
where, as in \seref{s:TV}, $l(e_1),\dots, l(e_n)$ are edges of $C$ taken in
counterclockwise order. Thus, we can define the map  
\begin{align*}
s\colon \VGr_\De&\to \HTV(\Si,\De)\\
             \Ga&\mapsto \sum_{l} \sqrt{d_l} \bigotimes_{C} \<\Ga^l\>_C
\end{align*}

It easily follows from \leref{l:HsD} and \leref{l:edge_crossing} that $s$
descends to the string net space $\HsD$. Local relations
\eqref{e:local_rels2a} imply that $s$ is inverse of the map $\pi\colon 
\HTV(\Si,\De)\to\HsD$, which completes the proof of \leref{l:main2}. 
\end{proof}

\begin{proof}[Proof of \leref{l:main3}]
We need to prove that for any $\ph\in \HTV(\Si,\De)$ we have 
$$
\pi_\De(A\ph)=(\prod B_p) \pi_\De(\ph)
$$
where $A=Z_{TV}(\Si\times I)$. 

To avoid complicated notation, we illustrate the proof using the cell
decomposition below.  

Let $\ph=\bigotimes_C \ph_C\in \HTV(\Si,\De)$, so that 
$$
\pi_\De(\ph) = 
\sqrt{d_{\mathbf i}} \qquad
%%%%%%%%%%%%%%%
\begin{tikzpicture}
% draw cells
\coordinate (a1) at (1.5,4);
\coordinate (a2) at (-1.2,2.5);
\coordinate (a3) at (4.7,2.5);
\coordinate (a4) at (-3,0);
\coordinate (a5) at (0,0);
\coordinate (a6) at (3,0);
\coordinate (a7) at (-1.2,-2.5);
\coordinate (a8) at (4.7,-2.5);
\coordinate (a9) at (1.5,-4);
\draw[edge] (a1)--(a3)--(a6)--(a5)--(a2)--(a1)
            (a6)--(a8) --(a9)--(a7)--(a5)
            (a2)--(a4)--(a7)
            (a3).. controls +(-45:1.4cm) and +(45:1.4cm)..(a8);
% draw center morpisms
\node[morphism] (ph1) at (1.5,2) {$\ph_1$};
\node[morphism] (ph2) at (-1.5,0) {$\ph_2$};
\node[morphism] (ph3) at (4.5,0) {$\ph_3$};
\node[morphism] (ph4) at (1.5,-2) {$\ph_4$};
% draw edges 
\draw (ph1) --(ph2) -- (ph4) -- (ph3) --(ph1) --(ph4) node[pos=0.4, right] {$i$};
\draw (ph1) -- +(130:2cm);
\draw (ph1) -- +(50:2cm);
\draw (ph2) -- +(130:2cm);
\draw (ph2) -- +(230:2cm);
\draw (ph3) -- +(0:1.5cm);
\draw (ph4) -- +(-130:2cm);
\draw (ph4) -- +(-50:2cm);
\end{tikzpicture}
%%%%%%%%%%%%%%
$$
where ${\mathbf i}=(i_1,\dots, i_n)$ is the collection of colors of the edges of the dual graph $\Ga_\De$, and $\sqrt{d_{\mathbf i}}=\prod \sqrt{d_{i_e}}$.
(For illustration we have marked just one of the colors $i_e$ in the
figure). 

Let us consider the cylinder $\Si\times I$ with the obvious cell
decomposition: its cells are of the form $C\times I$, where $C$ is a cell
of $\De$, and let $A=Z(\Si\times I)\colon \HTV(\Si,\De)\to \HTV(\Si,\De)$
be the corresponding operator as defined in \seref{s:TV}.  Then an easy
explicit computation using results of\ocite{balsam-kirillov} shows that  
$$
\pi_\De(A\ph) = \frac{1}{\DD^{2v}}
\sum_{\mathbf k,\mathbf  j} \sqrt{d_{\mathbf i}} \, d_{\mathbf j}\, d_{\mathbf k}  \qquad
%%%%%%%%%%%%%%%
\begin{tikzpicture}
% draw cells
\coordinate (a1) at (1.5,4);
\coordinate (a2) at (-1.2,2.5);
\coordinate (a3) at (4.7,2.5);
\coordinate (a4) at (-3,0);
\coordinate (a5) at (0,0);
\coordinate (a6) at (3,0);
\coordinate (a7) at (-1.2,-2.5);
\coordinate (a8) at (4.7,-2.5);
\coordinate (a9) at (1.5,-4);
\draw[edge] (a1)--(a3)--(a6)--(a5)--(a2)--(a1)
            (a6)--(a8) --(a9)--(a7)--(a5)
            (a2)--(a4)--(a7)
            (a3).. controls +(-45:1.4cm) and +(45:1.4cm)..(a8);
% draw center morpisms
\node[morphism] (ph1) at (1.5,2) {$\ph_1$};
  % now draw the side ones 
  \path (ph1)  node[small_morphism] (al1) at +(130:1cm){}
               node[small_morphism] (al2) at +(50:1cm){}
               node[small_morphism] (al3) at +(210:1cm){}
               node[small_morphism] (al4) at +(-30:1cm){}
               node[small_morphism] (al5) at +(270:1cm){$\al$};
 %and connect them by edges
 \draw (al1)--(al2)--(al4)--(al5)--(al3) node[pos=0.3,left]{$k$}
       --(al1);
 \draw (ph1) --(al1) (ph1)--(al2) (ph1)--(al3) (ph1)--(al4) 
     (ph1)--(al5) node[pos=0.5,right]{$i$} ;
\node[morphism] (ph2) at (-1.5,0) {$\ph_2$};
  \path (ph2)  node[small_morphism] (al11) at +(130:0.9cm){}
               node[small_morphism] (al12) at +(50:0.9cm){}
               node[small_morphism] (al13) at +(-130:0.9cm){}
               node[small_morphism] (al14) at +(-50:0.9cm){};
 %and connect them by edges
 \draw (al11)--(al12)--(al14) node[pos=0.5,right]{$k$}
       --(al13)--(al11);
 \draw (ph2) --(al11) (ph2)--(al12) (ph2)--(al13) (ph2)--(al14);

\node[morphism] (ph3) at (4.5,0) {$\ph_3$};
  \path (ph3)  node[small_morphism] (al15) at +(120:0.9cm){}
               node[small_morphism] (al16) at +(0:0.6cm){}
               node[small_morphism] (al17) at +(-120:0.9cm){};
 \draw (al15)--(al16)--(al17)--(al15);
 \draw (ph3)--(al15) (ph3)--(al16) (ph3)--(al17);
\node[morphism] (ph4) at (1.5,-2) {$\ph_4$};
  \path (ph4)  node[small_morphism] (al6) at +(90:1cm){$\al$}
               node[small_morphism] (al7) at +(150:1cm){}
               node[small_morphism] (al8) at +(30:1cm){}
               node[small_morphism] (al9) at +(230:1cm){}
               node[small_morphism] (al10) at +(-50:1cm){};
 %and connect them by edges
 \draw (al6)--(al8)--(al10)--(al9)--(al7)--(al6) node[pos=0.7,left]{$k$};
 \draw (ph4) --(al6) node[pos=0.5,right]{$i$} 
       (ph4)--(al7) (ph4)--(al8) (ph4)--(al9) (ph4)--(al10);
% draw graph across teh edges
\draw (al5)-- (al6) node[pos=0.4,right]{$j$}
      (al3)--(al12)
      (al14)--(al7)
      (al4)--(al15)
      (al8)--(al17); 
%draw outgoing edges
\draw (al1)-- +(130:1.2cm)
      (al2)-- +(50:1.2cm)
      (al11)-- +(130:1cm)
      (al13)-- +(230:1cm)
      (al10)-- +(-50:1cm)
      (al9)-- +(-130:1cm)
      (al16) -- + (0:1cm);
\end{tikzpicture}
%%%%%%%%%%%%%%
$$
where $v$ is the number of vertices of the cell decomposition $\De$ and $\mathbf j, \mathbf k$ are described below. 

In this figure there are 3 kinds of edges: 

``inner'' (they come in pairs of the same color; one such pair is labeled
by $i$ in the figure). We denote by $\mathbf i$ the set of colors of all such pairs. 

``outer'' (one such is labeled by letter $j$). We denote by $\mathbf j$ the set of colors of all such edges

``side'' (these edges are dual to the edges $p\times I$ of $\Si\times I$,
where $p$ is a vertex of $\De$; they come in ``tuples'' --- as many edges
of the same color as there are cells incident to a given vertex. One such
``tuple'' is labeled $k$ in the figure). We denote by $\mathbf k$ the set of colors of all such ``tuples''

The small circles correspond to 2-cells $e\times I$ of $\Si\times I$, where
$e$ is an edge of $\De$; they come in pairs (one such pair is labeled by
letter $\al$ in the figure; as before, we assume summation over dual
bases).

Now, using relation \eqref{e:local_rels2a} we can contract all ``outer''
edges, rewriting this as 
\begin{align*}
\pi_\De(A\ph) &= \frac{1}{\DD^{2v}}
\sum_{\mathbf k}
 \sqrt{d_{\mathbf i}} \,  d_{\mathbf k}  \qquad
%%%%%%%%%%%%%%%
\begin{tikzpicture}
% draw cells
\coordinate (a1) at (1.5,4);
\coordinate (a2) at (-1.2,2.5);
\coordinate (a3) at (4.7,2.5);
\coordinate (a4) at (-3,0);
\coordinate (a5) at (0,0);
\coordinate (a6) at (3,0);
\coordinate (a7) at (-1.2,-2.5);
\coordinate (a8) at (4.7,-2.5);
\coordinate (a9) at (1.5,-4);
\draw[edge] (a1)--(a3)--(a6)--(a5)--(a2)--(a1)
            (a6)--(a8) --(a9)--(a7)--(a5)
            (a2)--(a4)--(a7)
            (a3).. controls +(-45:1.4cm) and +(45:1.4cm)..(a8);
% draw center morpisms
\node[morphism] (ph1) at (1.5,2) {$\ph_1$};
\node[morphism] (ph2) at (-1.5,0) {$\ph_2$};
\node[morphism] (ph3) at (4.5,0) {$\ph_3$};
\node[morphism] (ph4) at (1.5,-2) {$\ph_4$};
% draw edges 
\draw (ph1) --(ph2) -- (ph4) -- (ph3) --(ph1) --(ph4) node[pos=0.4, right] {$i$};
\draw (ph1) -- +(130:2cm);
\draw (ph1) -- +(50:2cm);
\draw (ph2) -- +(130:2cm);
\draw (ph2) -- +(230:2cm);
\draw (ph3) -- +(0:1.5cm);
\draw (ph4) -- +(-130:2cm);
\draw (ph4) -- +(-50:2cm);
% draw circles 
\draw (a5) circle (0.5cm) node at +(60:0.7cm) {$k$}
      (a6) circle(0.5cm) 
      (a1) +(-170:0.5cm) arc (-170:-10:0.5cm)
      (a9) +(170:0.5cm) arc (170:10:0.5cm)
      (a2) +(-140:0.5cm) arc (-140:45:0.5cm)      
      (a7) +(140:0.5cm) arc (140:-45:0.5cm)      
      (a3) +(120:0.5cm) arc (120:320:0.5cm)
      (a8) +(-120:0.5cm) arc (-120:-320:0.5cm)
      (a4) +(60:0.5cm) arc (60:-60:0.5cm);
\end{tikzpicture}
%%%%%%%%%%%%%%
\\
&=(\prod B_p )\pi_\De(\ph)
\end{align*}

\end{proof}

This completes the proof of \thref{t:main}.

%%%%%%%%%%%%%%%%%%%%%%%%%%%%%%%%%%%%%%%%%%%%%%%%%%%%%%%%%%%%%%
\section{Category of boundary values}\label{s:boundary}
%%%%%%%%%%%%%%%%%%%%%%%%%%%%%%%%%%%%%%%%%%%%%%%%%%%%%%%%%%%%%%
We now consider string nets on surfaces with boundary. In this section we 
will show that possible boundary values are described by objects in a 
certain abelian  category $\C(\del \Si)$; choosing a homeomorphism 
$\del\Si\simeq (S^1)^n$ gives an equivalence $\C(\Si)\simeq \C^{\boxtimes
n}$, where $\C=\C(S^1)$ is the so-called Drinfeld center of $\A$.  Thus,
for any choice of objects $Y_1,\dots, Y_n\in \C$ we will have the vector
space $Z(\Si; Y_1,\dots, Y_n)$ of string nets satisfying boundary condition
given by $Y_1\boxtimes\dots \boxtimes Y_n$. 

In physics literature, the space $Z(\Si; Y_1,\dots, Y_n)$  is usually 
thought of as  the space of ``excited states'' on the surface $\hat\Si$ 
obtained by gluing a disk $D_i$ to every boundary 
circle of $\Si$; more precisely, vectors in $Z(\Si; Y_1,\dots, Y_n)$ are 
said to describe states which have excitations localized in disks $D_i$, 
with $Y_i$ describing the type of such an excitation.
Simple objects $Y_i$ are said to describe ``quasiparticle states'' or 
``anyons'' (see \ocite{kitaev}). 

We begin by explaining the reasoning that leads us to the correct 
definition of the category $\C(\del \Si)$; readers who are interested in 
the final answer can skip to \deref{d:Chat}.

Let us for simplicity assume that $S=\del \Si$ is a single circle. Since 
the string nets are allowed to meet the boundary circle, the first natural 
idea is to say that the boundary condition  is described by a collection 
$B=\{b_1,\dots, b_k\}$ of marked points on $S$  colored by objects 
$V_1,\dots, V_k$ of $\A$,a s was done in \seref{s:colored}.  However, it is
also natural to impose the local relation shown in
\firef{f:boundary_move}. 
%%%%%%%%%%%%%%%%%
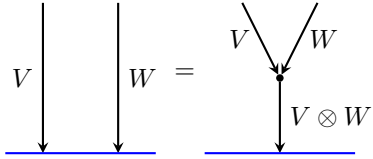
\begin{figure}[ht]
%%%%%%%%%%%%%%%%%%%%%
\begin{tikzpicture}[baseline=1cm]
\draw[<-] (-0.5,0)-- (-0.5,2cm) node[pos=0.5,left] {$V$};
\draw[<-] (0.5,0)-- (0.5,2cm) node[pos=0.5,right] {$W$};
\draw[boundary] (-1,0)--(1,0);
\end{tikzpicture}
%%%%%%%%%%%%%%%%%%%%%%
=
%%%%%%%%%%%%%%%%%%%%%%
\begin{tikzpicture}[baseline=1cm]
\node[dotnode] (ph) at (0,1) {};
\draw[->] (ph)-- +(0,-1) node[pos=0.5,right] {$V\otimes W$};
\draw[<-] (ph)-- (-0.5,2) node[pos=0.5,left] {$V$};
\draw[<-] (ph)-- (0.5,2) node[pos=0.5,right] {$W$};
\draw[boundary] (-1,0)--(1,0);
\end{tikzpicture}
  \caption{A local move for string nets on a surface with boundary. Here  
    the vertex should be colored by element in $\<W^*,V^*, V\otimes W\>$ 
    corresponding to the identity morphism $V\otimes W\to V\otimes W$.}
  \label{f:boundary_move}
\end{figure}
%%%%%%%%%%%%%%%%%%

Thus, the boundary condition described by a collection of objects $V_1, 
\dots, V_i, V_{i+1},\dots$ should be considered equivalent to the boundary 
condition described by the collection  $V_1, \dots, V_{i}\otimes V_{i+1}, 
\dots$.  Since using this equivalence, any collection of 
objects $V_1,\dots, V_k$ can be replaced by a single object $V=V_1\otimes
\dots \otimes V$, one can think that the boundary condition is described 
just by an object $V\in \A$; this is the point of view taken in
\ocite{kuperberg}.

Unfortunately, this is not the right definition. To see why it is so,
consider the case when we have two  points on the boundary circle colored
by objects $V, W\in \A$. Then, as described above,  
this boundary condition is equivalent to the one described by a single
point colored by $V\otimes W$. On the other hand, we could also move point
colored by $W$ around the circle,  arriving at the  pair $W,V$, which is
equivalent to $W\otimes V$. Thus, in our yet to be defined category of
boundary conditions, we must have the relation $V\otimes W\simeq  
W\otimes V$. Informally, we could say that  
this category should be something like the quotient 
\begin{equation}\label{e:quotient}
  \C=\A/(V\otimes W\simeq W\otimes V)
\end{equation}
Of course, defining a ``quotient category'' is more complicated than just 
defining a quotient of a vector space, so one needs to make sense out of 
this formula. One way to do it is to use the 
results of \ocite{greenough}, which gives the answer: when properly 
defined, the quotient \eqref{e:quotient} is exactly the Drinfeld center of 
$\A$. However, we choose to use a different approach (which, of course, 
gives the same answer).

We can now give a precise definition. 
%%%%%%%%%%%%%%%%%%%%%%%%%
\begin{definition}\label{d:Chat}
  Let $S$ be an oriented 1-dimensional manifold (not necessarily  
  connected). Define  $\Chat(S)$ as the category whose objects are finite 
  subsets  $B\subset S$ together with a choice of object $V_b\in \Obj \A$ for 
  every point $b\in B$; we will use a notation $\VV=(B, \{V_b\})$ for 
  such an object. Define the  morphisms in $\Chat(S)$ by  
  $$
  \Hom_{\Chat}(\VV, \VV')=\Hs(S\times I; \VV^*,\VV'),\qquad
  \VV=(B,\{V_b\}), \quad 
  \VV'=(B',\{V_{b'}\})
 $$ 
 where $\VV^*, \VV'$ means the boundary condition obtained by putting
points $b\in B$ on the ``top'' $S\times\{1\}$, colored by objects
$V_b^*$ for outgoing legs (and thus colored by $V_b$ for incoming legs),
and putting points $b'\in B'$ on the ``bottom'' $S\times\{0\}$, colored by
objects $V_{b'}$ for outgoing legs.

%%%%%%%%%%%%%%%%%
\begin{figure}[ht]
   %%%%%%%%
\begin{tikzpicture}
\node[morphism] (ph) at (0,0) {$\ph$};
\coordinate (top) at (0,1);
\coordinate (bot) at (0,-1);
\coordinate (left) at (-1.2,0);
\coordinate (right) at (1.2,0);
\draw[<-] (ph)-- (-1,0|-top) node[pos=0.7, left] {$V_1$} ;
\draw[<-] (ph)-- (1,0|-top) node[pos=0.7, right] {$V_n$};
\draw[->] (ph)-- (-1,0|-bot) node[pos=0.7, left=2pt] {$V'_1$} ;
\draw[->] (ph)-- (1,0|-bot) node[pos=0.7, right] {$V'_m$};
\draw[boundary] (left|-top)--(right|-top); 
\draw[boundary] (left|-bot)--(right|-bot); 
\end{tikzpicture}
%%%%%%%%
  \caption{Morphisms in $\Chat(S)$}
  \label{f:morphisms_Chat}
\end{figure}
%%%%%%%%%%%%%%%%%%

\end{definition}
%%%%%%%%%%%%%%%%%%%%%%%%%%%
%Note that any homotopy which moves points of $P$ without changing the
%colors $V_p$ gives an isomorphism in  $\Chat$. For example, for $S$ being a
%circle, the homotopy that moves every point one full rotation around the
%circle gives a non-trivial isomorphism $T_{V_P}\colon V_P\to V_P$ as shown
%in \firef{f:dehn_twist}. 

%%%%%%%%%%%%%%%%%%
%\begin{figure}[ht]
%   \vskip 3cm
%  \caption{Dehn twist}
%  \label{f:dehn_twist}
%\end{figure}
%%%%%%%%%%%%%%%%%%%

Note also, if $b, b'\in B$ are such that $b'$ is the successor  of 
$b$ in the order given by orientation of $S$, then we have an isomorphism
$( \dots V_b, V_{b'}, 
\dots) \simeq (\dots V_b\otimes V_{b'}, \dots)$ in  $\Chat$ shown in
\firef{f:chat_isom}. Thus,  the category $\Chat$ meets the requirements
outlined in informal discussion before. 

%%%%%%%%%%%%%%%%%
\begin{figure}[ht]
$$
%%%%%%%%%%%%%%%%
\begin{tikzpicture}
\node[dotnode] (ph) at (0,1) {};
\draw[->] (ph)-- +(0,-1) node[pos=0.5,right] {$V_b\otimes V_{b'}$};
\draw[<-] (ph)-- (-0.5,2) node[pos=0.5,left] {$V_b$};
\draw[<-] (ph)-- (0.5,2cm) node[pos=0.5,right] {$V_{b'}$};
\draw[boundary] (-1,2)--(1,2) (-1,0)--(1,0);
\end{tikzpicture}
%%%%%%%%%%%%%%%%%
$$
  \caption{}
  \label{f:chat_isom}
\end{figure}
%%%%%%%%%%%%%%%%%%

Note that $\Chat$ in general is not an abelian category, even
though the morphisms do form  vector spaces.  Let $\C(S)$ be the
pseudo-abelian completion of $\Chat$  i.e., the  category obtained by
adjoining finite direct sums and images of idempotents. Recall that the
morphisms in this category are defined by  
\begin{equation}\label{e:C_morphisms} 
\Hom_{\C(S)}(Y_1, Y_2)=\{f\in \Hom_{\Chat(S)}(X_1,X_2)\st P_2f=fP_1=f\}
\end{equation}
if $Y_i=\im(P_i)$ for some idempotents $P_i\colon X_i\to X_i$, $ X_i\in \Obj \Chat(S)$. 
%$\Chat$ (see, e.g.,
%\ocite{balmer}); informally it can be described as the category obtained by
%adding to $\Chat$ all images of idempotent endomorphisms. 
%%%%%%%%%%%%%%%%%%%%%%%%%%
\begin{example}\label{x:C(R)}
  Let $S=\R$. Then $\Chat(S)\simeq \C(S)\simeq \A$, with the equivalence
given by $(V_{p_1},\dots, V_{p_n})\mapsto V_{p_1}\otimes \dots\otimes
V_{p_n}$. 
\end{example}

%%%%%%%%%%%%%%%%%%%%%%%%%%
\begin{theorem}\label{t:Chat}
\par\noindent
\begin{enumerate}
 \item Any orientation-preserving homeomorphism of 1-dimensional 
    manifolds $\ph\colon S\to S'$ 
    gives rise to an equivalence $\ph_*\colon \C(S)\to \C(S')$ such 
    that $(\ph \psi)_*=\ph_*\psi_*$  \textup{(}note that it is an equality and 
    not just an isomorphism\textup{)}. Moreover, every homotopy $\ph_t$ 
    between two homeomorphisms gives rise to a isomorphism of functors 
    $\ph_0\to \ph_1$. 
         
 \item One has a natural equivalence 
      $\C(S\sqcup S')\simeq \C(S)\boxtimes \C(S')$.
\end{enumerate}
\end{theorem}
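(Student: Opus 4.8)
The plan is to build the functor $\ph_*$ on objects and morphisms directly from the definition of $\C(S)$ as the pseudo-abelian completion of $\Chat(S)$, and to derive all the naturality statements from functoriality of the string-net construction under homeomorphisms of surfaces. First I would treat part (1). On objects of $\Chat(S)$, an object is a pair $\VV=(B,\{V_b\})$ with $B\subset S$ finite; given $\ph\colon S\to S'$, set $\ph_*\VV=(\ph(B),\{V_b\}_{b\in\ph(B)})$, which is literally an object of $\Chat(S')$. For morphisms, recall $\Hom_{\Chat(S)}(\VV,\VV')=\Hs(S\times I;\VV^*,\VV')$; the homeomorphism $\ph$ induces $\ph\times\id_I\colon S\times I\to S'\times I$, which carries colored graphs to colored graphs and respects the null-graph subspace (since $\<\,\cdot\,\>_D$ is an isotopy/homeomorphism invariant by \thref{t:RT}), hence descends to a linear isomorphism $\Hs(S\times I;\VV^*,\VV')\to\Hs(S'\times I;\ph_*\VV^*,\ph_*\VV')$. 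Functoriality with respect to composition of graphs (stacking cylinders) is immediate, and $(\ph\psi)\times\id_I=(\ph\times\id_I)(\psi\times\id_I)$ on the nose, giving the strict equality $(\ph\psi)_*=\ph_*\psi_*$. One then extends $\ph_*$ to the pseudo-abelian completion $\C(S)$ in the canonical way (send $\im(P)$ to $\im(\ph_*P)$, which makes sense by \eqref{e:C_morphisms}); this extension is automatically an equivalence because $\ph_*$ was already an equivalence on $\Chat(S)$, with inverse $(\ph^{-1})_*$.

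For the homotopy statement: if $\ph_t$ is a homotopy from $\ph_0$ to $\ph_1$ through homeomorphisms (equivalently, through orientation-preserving homeomorphisms, since on a $1$-manifold an isotopy through homeomorphisms stays orientation-preserving), then $\ph_t\times\id_I$ gives an ambient isotopy of $S'\times I$, and by isotopy-invariance of string nets (\thref{t:local_rels2}(2)) the induced maps on $\Hs(S\times I;-,-)$ agree. More precisely, for each object $\VV$ one produces a morphism $\ph_0\VV\to\ph_1\VV$ in $\Chat(S')$ out of the graph on $S'\times I$ tracing the isotopy of the marked points $B$ (an ``identity cylinder'' with its endpoints moved along $\ph_t$), and one checks this is natural in $\VV$ and invertible; these assemble into the desired isomorphism of functors $\ph_0\to\ph_1$.

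Part (2) is a direct unwinding. An object of $\Chat(S\sqcup S')$ is a finite subset of $S\sqcup S'$ with colors, which is the same data as a pair of objects, one in $\Chat(S)$ and one in $\Chat(S')$; and $(S\sqcup S')\times I=(S\times I)\sqcup(S'\times I)$, so by \thref{t:local_rels2}(4) ($\Hs$ of a disjoint union is the tensor product) the Hom-spaces factor as a tensor product, which is precisely the definition of $\boxtimes$ on the Hom-level. Passing to pseudo-abelian completions is compatible with $\boxtimes$ (the completion of $\C_1\boxtimes\C_2$ agrees with $\C(S)\boxtimes\C(S')$ since the latter is already pseudo-abelian), giving the natural equivalence. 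I would state this as ``a straightforward check'' and spell out only the identification of objects and the use of \thref{t:local_rels2}(4).

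The main obstacle is not any single hard computation but the bookkeeping in part (1): verifying carefully that $\ph\times\id_I$ preserves the subspace $N$ of null graphs (so the map descends to $\Hs$), that composition in $\Chat$ — which is concatenation of cylinders and is only defined up to the reparametrization homeomorphism $S\times[0,2]\cong S\times I$ — is respected strictly enough to yield the stated equality $(\ph\psi)_*=\ph_*\psi_*$ rather than a mere isomorphism, and that the homotopy-induced transformation is genuinely natural (commutes with arbitrary morphisms in $\Chat(S')$, which requires an isotopy argument inside $S'\times I$ sliding the trace-of-isotopy graph past an arbitrary string net). Everything else reduces to the already-established invariance properties of $\Hs$ and the formal properties of pseudo-abelian completion.
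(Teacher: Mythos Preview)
Your proposal is correct and matches the paper's approach: the paper's entire proof is the sentence ``The proof is straightforward,'' and what you have written is precisely the straightforward argument one would give upon unpacking the definitions. Your identification of the bookkeeping issues (preservation of null graphs under $\ph\times\id_I$, strictness of composition, naturality of the homotopy-induced transformation) is accurate, and none of them presents a genuine obstacle.
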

The proof is straightforward.

%%%%%%%%%%%%%%%%%%%%%%%%%
\begin{theorem}\label{t:drinfeld_center}
 Let $S^1$ be the standard circle. Then one has an 
 equivalence   $J\colon \C(S^1)\simeq Z(\A)$, where $Z(\A)$ is the Drinfeld
center of $\A$ \textup{(}see  \seref{s:drinfeld_center}\textup{)}. 
\end{theorem}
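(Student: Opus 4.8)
The plan is to construct an equivalence $J\colon \C(S^1)\to Z(\A)$ by exhibiting its action on the generating objects $\VV = (\{b\}, V)$ — a single marked point colored by $V\in\Obj\A$ — and showing it is fully faithful and essentially surjective, then extending to the pseudo-abelian completion. On objects, $J$ should send the one-point boundary value colored by $V$ to the object of $Z(\A)$ whose underlying object is $\bigoplus_{i\in\Irr(\A)} (X_i^* \otimes V \otimes X_i)$ — or, more efficiently, to the "forgetful-adjoint" object $I(V)$, where $I\colon \A\to Z(\A)$ is the right adjoint to the forgetful functor $F\colon Z(\A)\to\A$. The half-braiding on $I(V)$ is the standard one coming from the adjunction. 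The reason this is the right guess is exactly the informal discussion in \seref{s:boundary}: a point colored by $V$ on $S^1\times I$, thought of as a morphism in $\Chat$, must be able to slide around the annulus, and summing over all ways to "cap off with a loop going around" is precisely the formula for the induction functor $I$. Concretely, a string net on $S^1\times I$ with one leg colored $V$ on top and one colored $W$ on the bottom can be evaluated, using \coref{c:B_p} and the relation \eqref{e:regular_color}, by inserting a "regular" loop parallel to the boundary; this exhibits $\Hom_{\Chat}((\{b\},V),(\{b'\},W)) = \Hs(S^1\times I; V^*, W)$ as a space of morphisms $V\to W$ in $\A$ equipped with compatibility data that is exactly a morphism $I(V)\to I(W)$ in $Z(\A)$.

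The key steps, in order, are: (i) Recall from \seref{s:drinfeld_center} (referenced later in the paper) that $Z(\A)\simeq$ the category of modules over the canonical Lagrangian algebra, equivalently that $I(V) = \bigoplus_i X_i\otimes V\otimes X_i^*$ with its canonical half-braiding, and that $\Hom_{Z(\A)}(I(V),I(W)) \simeq \Hom_\A(V, I(W)) \simeq \bigoplus_i \Hom_\A(V, X_i\otimes W\otimes X_i^*)$. (ii) Compute $\Hom_{\Chat(S^1)}((\{b\},V),(\{b'\},W)) = \Hs(S^1\times I; V^*, W)$ directly: a string net on the annulus is, up to the local relations of \thref{t:local_rels2} and \deref{d:string-net}, generated by configurations where all of the graph except the two legs can be pushed into a neighborhood of the core circle; cutting the annulus along a radial arc and using \thref{t:RT} to evaluate, and then quotienting by the relation that the two sides of the cut must match (this is a single "$B_p$-type" relation, the loop going around the annulus), identifies this Hom-space with $\bigoplus_{i}\Hom_\A(V, X_i\otimes W\otimes X_i^*)$. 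Here the reindexing of the annulus core by $i\in\Irr(\A)$ comes from the local relation \eqref{e:local_rels2a}. (iii) Check that this identification is compatible with composition: composing morphisms in $\Chat(S^1)$ is gluing annuli, and under the identification in (ii) this becomes composition in $Z(\A)$ — the matching-loop relation on the glued annulus is exactly the condition built into $\Hom_{Z(\A)}$ via the half-braiding. This gives a fully faithful functor $\Chat(S^1)\to Z(\A)$. (iv) Observe it is essentially surjective after pseudo-abelian completion: every object of $Z(\A)$ is a direct summand of some $I(V)$ (since $FI\neq 0$ and $\A$ is fusion, $I$ is dominant), and $\C(S^1)$ is by definition the pseudo-abelian completion of $\Chat(S^1)$, while $Z(\A)$ is already pseudo-abelian; hence $J$ extends to an equivalence $\C(S^1)\simeq Z(\A)$.

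The main obstacle I expect is step (iii): verifying that composition of string nets on glued annuli matches composition in $Z(\A)$, and in particular that the half-braiding structure on $I(V)$ emerges correctly (with the right orientation conventions — recall the \textbf{Remark} after \firef{f:tangle} warning that the graphical conventions here are opposite to \ocite{balsam-kirillov,BK}). One must check that sliding a leg all the way around the annulus implements precisely the half-braiding $c_{X_i,V}$, not its inverse, and that the normalization factors $d_i$ and $\DD$ match those in the definition of $Z(\A)$ as modules over the Lagrangian algebra. A secondary, more bookkeeping-level obstacle is proving the claimed normal form in step (ii): that every string net on $S^1\times I$ with prescribed legs can be reduced, modulo null graphs, to a combination of "standard" ones supported near the core circle — this follows from \leref{l:edge_crossing} and the puncture/isotopy relations but requires care to state cleanly.
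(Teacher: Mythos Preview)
Your approach is correct and shares the paper's key insight: send a boundary value $\VV$ to $I(V_1\otimes\cdots\otimes V_k)$ via the induction functor, and use that every object of $Z(\A)$ is a summand of some $I(V)$ for essential surjectivity. Where you diverge is in how you certify that $J$ is an equivalence. You propose to prove full faithfulness directly, by computing $\Hs(S^1\times I; V^*, W)\simeq \bigoplus_i \Hom_\A(V, X_i\otimes W\otimes X_i^*)\simeq \Hom_{Z(\A)}(I(V),I(W))$ and then checking composition. The paper instead constructs an explicit inverse functor $K\colon Z(\A)\to \C(S^1)$: for $Y\in Z(\A)$ it places $F(Y)$ at a point of $S^1$ and takes the image of the projector $P$ given by a regular loop around the core of the annulus (using the half-braiding to pass through the $Y$-strand), then checks $JK\simeq\id$ and $KJ\simeq\id$ via \leref{l:proj_Y}.

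What each buys: your route is conceptually cleaner and makes the identification $\Hom_{\Chat}\simeq\Hom_{Z(\A)}$ transparent, but your step (ii)---the normal form for string nets on the annulus---is exactly where the work hides, and your sketch (``cut radially, quotient by a $B_p$-type relation'') is not yet a proof; done carefully it becomes essentially the content of \leref{l:isomorphisms}, which the paper also invokes. The paper's route, by writing down $K$ and the projector $P$ explicitly, avoids having to prove that normal form directly, and moreover the projector description is reused later (see \coref{c:P} and its role in \thref{t:hhs}), so the explicit inverse is not wasted effort. Your concern about orientation conventions and half-braiding vs.\ its inverse in step (iii) is well-placed; the paper sidesteps this by packaging the half-braiding into the definition of $P$ (\firef{f:P}) and checking compatibility graphically.
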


Before proving this theorem, we note the following useful corollary.

\begin{corollary}
For any oriented 1-manifold $S$, $\C(S)$ is an abelian category.
\end{corollary}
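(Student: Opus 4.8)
The plan is to reduce the statement for a general oriented $1$-manifold $S$ to the two cases already understood: $S=\R$ (or a disjoint union of copies of $\R$) and $S=S^1$ (or a disjoint union of copies of $S^1$). First I would observe that every compact oriented $1$-manifold is a finite disjoint union of circles, so by \thref{t:Chat}(2) we have $\C(S)\simeq \C(S^1)^{\boxtimes n}$, and by \thref{t:drinfeld_center} each factor $\C(S^1)\simeq Z(\A)$. Since $Z(\A)$ is abelian (it is the Drinfeld center of a fusion category, hence itself a fusion category by the results cited in the introduction via \ocite{ENO2005}, or simply because it is the category of modules over an algebra in $\A\boxtimes\A^{\mathrm{op}}$), and a finite Deligne product of abelian categories is abelian, we conclude $\C(S)$ is abelian in the compact case.

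Next I would handle the non-compact components. A connected non-compact oriented $1$-manifold is either an open interval (homeomorphic to $\R$) or a half-open interval; but in the context of this paper $S=\del\Si$ is the boundary of a surface, so its components are circles, and the appearance of $\R$ only arises as a local model. In any case, by \exref{x:C(R)} we have $\C(\R)\simeq\A$, which is abelian. Combining with \thref{t:Chat}(2) again, $\C(S)$ for an arbitrary oriented $1$-manifold $S$ is a finite Deligne product of copies of $\A$ and copies of $Z(\A)$, hence abelian. I would phrase the argument so that it covers whatever generality the authors intend for ``oriented $1$-manifold'' (presumably compact, so only circles occur, giving the cleanest statement $\C(S)\simeq Z(\A)^{\boxtimes n}$).

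The one point requiring a little care — and the step I expect to be the main obstacle — is ensuring that the pseudo-abelian (Karoubi) completion $\C(S)$ of $\Chat(S)$ is genuinely abelian and not merely idempotent-complete and additive. This is automatic once we know $\C(S)$ is equivalent to a known abelian category via \thref{t:drinfeld_center} and \thref{t:Chat}; the content is entirely in those theorems, and the corollary is then a one-line deduction. So the ``proof'' is essentially: \emph{apply \thref{t:Chat}(2) to decompose $S$ into connected components, apply \exref{x:C(R)} and \thref{t:drinfeld_center} to identify the factors with $\A$ and $Z(\A)$ respectively, and note that both are abelian and that a finite Deligne product of abelian categories is abelian.} I would write it in that order, with the only real verification being the (standard) fact that $Z(\A)$ is abelian, for which one can cite that it is the center of a fusion category and hence a fusion category, or refer forward to \seref{s:drinfeld_center} where $Z(\A)$ is described concretely.
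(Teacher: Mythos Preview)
Your proposal is correct and matches the paper's own argument essentially verbatim: the paper's proof is the one-liner that $\C(\R)\simeq\A$ and $\C(S^1)\simeq Z(\A)$ are abelian (by \exref{x:C(R)} and \thref{t:drinfeld_center}), and then \thref{t:Chat} handles disjoint unions. Your additional remarks about half-open intervals and the Karoubi completion are harmless elaborations, but the substance is identical.
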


Indeed, it immediately follows from the fact that $\C(\R)\simeq \A$ and
$\C(S^1)\simeq Z(\A)$ are abelian (\exref{x:C(R)},
\thref{t:drinfeld_center}) and \thref{t:Chat}. 

\begin{proof}[Proof of \thref{t:drinfeld_center}]

The proof uses several results about Drinfeld center of $\A$, which 
are collected in \seref{s:drinfeld_center}. In particular, we will 
denote by $F\colon Z(\A)\to \A$ the forgetful functor and by 
$I\colon \A\to Z(\A)$ the adjoint functor. Explicit description 
of this functor can be found in \seref{s:drinfeld_center}.

As before, we will frequently use graphical presentation of morphisms
which involve objects both of $\A$ and $Z(\A)$. In these diagrams, we
will show objects of $Z(\A)$ by double green lines   and the
half-braiding isomorphism $\ph_Y\colon Y\otimes V\to V\otimes Y$  by
crossing as in \firef{f:crossing}.
\begin{figure}[ht]
\begin{tikzpicture}[baseline=0pt]
\draw[drinfeld center](0,2)--(1,0) node [pos=0.2, left,black] {$Y$};
\draw[overline](1,2) -- (0,0) node [pos=0.2, right] {$V$};
\end{tikzpicture}

\caption{Graphical presentation of the half-braiding $\ph_Y\colon Y\otimes
V\to V\otimes Y$, $Y\in \Obj Z(\A)$, $V\in \Obj \A$}\label{f:crossing}
\end{figure}

Let $\C'$ be the full subcategory in $\Chat(S^1)$ formed by objects
$\VV=(B, \{V_b\})$ such that $B$ does not contain the point $p_0=(1,0)\in S^1$.
Since it is obvious that every object in $\Chat(S^1)$ is isomorphic to an
object in $\C'$, the inclusion $\C'\subset \C$ is an equivalence.

Let us now construct the functor $J\colon \C'\to Z(\A)$ as follows. Let 
$\VV=(B, \{V_b\})$ be an object in $\C'$. Number the points of $B$, writing
$B=\{b_1, \dots, b_k\}$ going counterclockwise
starting with $p_0$. Define
$$
J(\VV)=I(V_1\otimes\dots \otimes V_k)
$$
where $I\colon A\to Z(\A)$ is the functor \eqref{e:I}. 

Now, define $J$ on morphisms as follows. Let $\Ga\in \Hs(S^1\times I)$ be a
colored graph representing a morphism $\VV\to \VV'$. Without loss of
generality, we can assume that this graph does not have any vertices on the
interval $\{p_0\}\times I$. Define $J(\Ga)\colon J(\VV)\to J(\VV')$ be the
morphism represented by the graph $\Ga''\subset [0,1]\times I$, obtained by
\begin{enumerate}
  \item Replacing all edges of $\Ga$ crossing $\{p_0\}\times I$ by a single
edge, colored by a linear combination of simple objects of $\A$, as in the
proof of \leref{l:main2}.
  \item Cutting the cylinder along the interval $\{p_0\}\times I$, to get
    a colored graph in $[0,1]\times I$
  \item Adding to the obtained graph four new legs and two vertices
  colored as in \firef{f:JGa}.
\end{enumerate}
%%%%%%%%%%%%%%%%%%%%%%%%%%%%%%%%%
\begin{figure}[ht]
\begin{align*}
&
%%%%%%%%%%
\begin{tikzpicture}[yscale=0.5]
\coordinate (t1) at (-1,1.5); \node[left] at (t1) {$\VV$};
\coordinate (t4) at (1, 1.5);
\coordinate (b1) at (-1,-1.5);\node[left] at (b1) {$\VV'$};
\coordinate (b4) at (1, -1.5);
\coordinate (t5) at (0, 0.5);
\coordinate (b5) at (0, -2.5);
\path (t1) arc (-180:-135:1cm) node (t2) {};
\fill[subgraph] (t1) arc (180:0:1cm) -- +(0,-3) arc (0:-180:1cm)--(t1);
\fill[white] (t2) arc (-135:-45:1cm)-- +(0,-3) arc
(-45:-135:1cm)--(t2);
\node[above] at (t5) {$p_0$};
\draw[boundary] (t1) arc (-180:0:1cm) (b4) arc (0:-180:1cm);
\draw[thin] (t1)--(b1) (t4)--(b4);
\draw[boundary] (t1) arc (180:0:1cm);
\draw[boundary,densely dotted,thin] (b1) arc (180:0:1cm);
\draw[edge] (t5)--(b5);
\draw (t2) ++(0,-1.6) arc (-135:-45:1cm);
\draw (t2) ++(0,-1.2) arc (-135:-45:1cm);
\draw (t2) ++(0,-1.4) arc (-135:-45:1cm);
\end{tikzpicture}
%%%%%%%%%%
\qquad\to \qquad 
%%%%%%%%%%
\begin{tikzpicture}[yscale=0.5]
\coordinate (t1) at (-1,1.5); \node[left] at (t1) {$\VV$};
\coordinate (t4) at (1, 1.5);
\coordinate (b1) at (-1,-1.5);\node[left] at (b1) {$\VV'$};
\coordinate (b4) at (1, -1.5);
\coordinate (t5) at (0, 0.5);
\coordinate (b5) at (0, -2.5);
\path (t1) arc (-180:-135:1cm) node (t2) {};
\fill[subgraph] (t1) arc (180:0:1cm) -- +(0,-3) arc (0:-180:1cm)--(t1);
\fill[white] (t2) arc (-135:-45:1cm)-- +(0,-3) arc
(-45:-135:1cm)--(t2);
\node[above] at (t5) {$p_0$};
\draw[boundary] (t1) arc (-180:0:1cm) (b4) arc (0:-180:1cm);
\draw[thin] (t1)--(b1) (t4)--(b4);
\draw[boundary] (t1) arc (180:0:1cm);
\draw[boundary,densely dotted,thin] (b1) arc (180:0:1cm);
\draw[edge] (t5)--(b5);
\node[small_morphism] (al1) at (-0.3,-0.8) {$\al$};
\node[small_morphism] (al2) at (0.3,-0.8) {$\al$};
\draw[regular] (al1)--(al2);
\draw (al1)-- +(-0.4,0.3) (al1)-- +(-0.4,0) (al1)-- +(-0.4,-0.3);
\draw (al2)-- +(0.4,0.3) (al2)-- +(0.4,0) (al2)-- +(0.4,-0.3);
\end{tikzpicture}
%%%%%%%%%%
\qquad \to \qquad 
%%%%%%%%%%
\begin{tikzpicture}
\coordinate (t1) at (-2,1);
\coordinate (t4) at (2,1);
\coordinate (b1) at (-2,-1);
\coordinate (b4) at (2,-1);
\fill[subgraph] (-1,-1) rectangle (1,1);
\draw[boundary] (t1) -- (t4)--(b4)--(b1)--(t1);
\node at (0,0) {$\Ga$};
\node[small_morphism] (al1) at (-1.5,0) {$\al$};
\node[small_morphism] (al2) at (1.5,0) {$\al$};
\draw (al1)--(-1,0.3) (al1)--(-1,0) (al1)--(-1,-0.3);
\draw[regular] (al1)--(al1-|t1);
\draw[regular] (al2)--(al2-|t4);
\draw (al2)--(1,0.3) (al2)--(1,0) (al2)--(1,-0.3);
\node[above] at (0,1) {$\VV$};
\node[below] at (0,-1) {$\VV'$};
\end{tikzpicture}
%%%%%%%%%%
\\  & \qquad \to \qquad 
\sum_{i,j,k\in \Irr(\A)}d_i \sqrt{d_k}\sqrt{d_j} \qquad
%%%%%%%%%%
\begin{tikzpicture}
\coordinate (t1) at (-2.2,1);
\coordinate (t4) at (2.2,1);
\coordinate (b1) at (-2.2,-1);
\coordinate (b4) at (2.2,-1);
\fill[subgraph] (-1,-1) rectangle (1,1);
\draw[boundary] (t1) -- (t4) (b4)--(b1);
\node at (0,0) {$\Ga$};
\node[small_morphism] (al1) at (-1.5,0) {$\al$};
\node[small_morphism] (al2) at (1.5,0) {$\al$};
\draw (al1)--(-1,0.3) (al1)--(-1,0) (al1)--(-1,-0.3);
\draw (al2)--(1,0.3) (al2)--(1,0) (al2)--(1,-0.3);
\node[small_morphism] (be1) at (-2,0) {$\be$};
\node[small_morphism] (be2) at (2,0) {$\be$};
\draw (al1)--(be1) node[midway,above,scale=0.8] {$i$};
\draw (al2)--(be2) node[midway,above,scale=0.8] {$i^*$};
\draw[->] (be1|-b1)--(be1) node[midway,left,scale=0.8] {$j$};
\draw[<-] (be2|-b4)--(be2) node[midway,right,scale=0.8] {$j$};
\draw[<-] (be1|-t1)--(be1) node[midway,left,scale=0.8] {$k$};
\draw[->] (be2|-t4)--(be2) node[midway,right,scale=0.8] {$k$};
\node[above] at (0,1) {$\VV$};
\node[below] at (0,-1) {$\VV'$};
\end{tikzpicture}
%%%%%%%%%%
\end{align*}
\caption{}\label{f:JGa}
\end{figure}
The same argument as in the proof of \thref{t:I} shows that $J(\Ga)$ only
depends on the class $\<\Ga\>$ in $\Hs(S^1\times I)$. By
\leref{l:isomorphisms}, $J(\Ga)\in \Hom_{Z(\A)}(J(\VV),J(\VV'))$. It is also
easy to see that so defined $J$ preserves composition of morphisms; thus,
we have defined a functor $J\colon \C'\to Z(\A)$. 

It is obvious from the definition that $J$ is additive and that its
essential image consists of objects of the form $I(V)$, $V\in \Obj(\A)$.
Since $Z(\A)$ is an abelian category, it is clear that this functor extends
naturally to the pseudo-abelian completion of $\C'$ which is equivalent to
the pseudo-abelian completion  $\C$ of $\Chat(S^1)$. Since every object in
$Z(\A)$ is a direct summand of some object of the form $I(V)$, $V\in \A$ 
(see remark after \leref{l:proj_Y}), we see that the functor 
$J\colon \C\to Z(\A)$ is  essentially surjective.

To prove that $J$ is an equivalence, we construct the inverse functor
$K\colon Z(\A)\to \C'$, which sends an object  $Y\in Z(\A)$ to the image of
the projector $P\in \Hom_{\C(S^1)} (\bf{Y}, \bf{Y})$ shown in \firef{f:P}; 
the image makes sense in the pseudo-abelian completion. 
\begin{figure}[ht]
$$
\frac{1}{\DD^2}\qquad
%%%%%%%%%%%%%%%%%%%%
\begin{tikzpicture}[yscale=0.5]
\coordinate (t1) at (-1,1.5);
\coordinate (t4) at (1, 1.5);
\coordinate (b1) at (-1,-1.5);
\coordinate (b4) at (1, -1.5);
\coordinate (t5) at (0, 0.5);
\coordinate (b5) at (0, -2.5);
\path (t4) arc (0:-135:1cm) coordinate (t3) {};
\draw[regular] (-1,0) arc (180:0:1cm);
\draw[drinfeld center] (t3)-- ++(0,-3);
\node[above] at (t3) {$Y$};
\draw[regular,overline](-1,0) arc (-180:0:1cm);
\draw[thin] (t1) arc (-180:0:1cm)-- (b4) arc (0:-180:1cm) -- (t1);
\draw[thin] (t1) arc (180:0:1cm);
\draw[densely dotted,thin] (b1) arc (180:0:1cm);
%\draw[edge] (t5)--(b5);
\end{tikzpicture}
$$
\caption{The projector $P\in \Hom_{\C(S^1)} (\bf{Y}, \bf{Y})$. Here $\bf{Y}$ is $F(Y)$ placed at the point $(-1,0)\in S^1$.}
\label{f:P}
\end{figure}
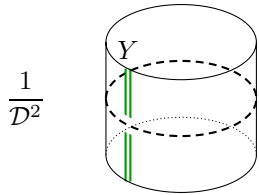
Using \leref{l:proj_Y}, it is easy to show that one has canonical
functorial isomorphisms $KJ\simeq \id$, $JK\simeq\id$. Thus, $J$ is an
equivalence. 
\end{proof}

Note that the explicit construction of the equivalence also gives the following result which we will use later.
\begin{corollary}\label{c:P}
Let  $K\colon Z(\A)\to \C(S^1)$ be the equivalence constructed in 
\thref{t:drinfeld_center}. Then for any $\VV\in \Chat(S^1)$, $Y\in Z(\A)$, we have
$$
\Hom_{\C(S^1)}(\VV, K(Y))=\{f\in \Hs(S^1\times I, \VV^*, {\bf Y})\st Pf=f\}
$$
where $\bf Y$, $P$ are as shown in \firef{f:P}. 
\end{corollary}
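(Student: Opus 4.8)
The plan is to simply unwind the definitions that were set up in the proof of \thref{t:drinfeld_center} and check that the functor $K$ behaves on Hom-spaces exactly as claimed. Recall that $K$ sends $Y\in Z(\A)$ to the image of the idempotent $P\in \Hom_{\C(S^1)}({\bf Y},{\bf Y})$ of \firef{f:P}, where ${\bf Y}$ is the object of $\Chat(S^1)$ given by $F(Y)$ placed at $(-1,0)\in S^1$. The pseudo-abelian completion formula \eqref{e:C_morphisms} then gives, for any $\VV\in\Chat(S^1)$,
$$
\Hom_{\C(S^1)}(\VV, K(Y))=\{f\in\Hom_{\Chat(S^1)}(\VV,{\bf Y})\st Pf=f\},
$$
since $\VV$ itself is already an object of $\Chat(S^1)$ (so the idempotent on the source side is the identity, and the only surviving condition is $Pf=f$). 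By \deref{d:Chat}, $\Hom_{\Chat(S^1)}(\VV,{\bf Y})=\Hs(S^1\times I;\VV^*,{\bf Y})$, and substituting this identification yields precisely the asserted equality. So the corollary is essentially a restatement of how $K$ was built, together with the definition of morphisms in a pseudo-abelian completion.

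First I would recall explicitly that in the construction of $K$ in \thref{t:drinfeld_center} one has $K(Y)=\im(P)$, and cite \leref{l:proj_Y} for the fact that $P$ is indeed an idempotent in $\Hom_{\C(S^1)}({\bf Y},{\bf Y})$ (this is needed for $\im(P)$ to make sense in the pseudo-abelian completion). Then I would invoke \eqref{e:C_morphisms} with $Y_1=\VV=\im(\id_{\VV})$, $Y_2=K(Y)=\im(P)$: an element of $\Hom_{\C(S^1)}(\VV,K(Y))$ is an $f\in\Hom_{\Chat(S^1)}(\VV,{\bf Y})$ satisfying $Pf=f$ and $f\cdot\id_{\VV}=f$, the latter being automatic. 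Finally I would rewrite $\Hom_{\Chat(S^1)}(\VV,{\bf Y})$ as $\Hs(S^1\times I,\VV^*,{\bf Y})$ via \deref{d:Chat}, and note that composition with $P$ in $\Chat(S^1)$ is, by definition of composition of morphisms in $\Chat$, exactly the operation of stacking the string-net cylinder representing $P$ on top of the one representing $f$; hence the condition $Pf=f$ is literally the condition displayed in the corollary.

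The only point requiring any care — and the one I would single out as the main (minor) obstacle — is bookkeeping about which boundary circle of $S^1\times I$ carries $\VV^*$ versus ${\bf Y}$, and correspondingly whether $P$ is composed on the left or on the right: one must match the ``top/bottom'' conventions of \deref{d:Chat} (where $\VV$ sits on $S^1\times\{1\}$ with dualized colors and $\VV'$ on $S^1\times\{0\}$) with the picture in \firef{f:P} (where $Y$ sits on the ``bottom'' and the dotted circle on top is $\VV$). Once the conventions are aligned, $Pf$ means: glue the cylinder of \firef{f:P}, read as a morphism ${\bf Y}\to{\bf Y}$, onto the target end of the cylinder representing $f$, which is exactly the stacking operation in $\Hs(S^1\times I)$; so $Pf=f$ in $\Chat(S^1)$ is the same as $Pf=f$ inside $\Hs(S^1\times I,\VV^*,{\bf Y})$. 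Everything else is formal, and no further input beyond \thref{t:drinfeld_center}, \leref{l:proj_Y}, \deref{d:Chat}, and \eqref{e:C_morphisms} is needed.
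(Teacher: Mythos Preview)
Your proposal is correct and matches the paper's approach: the corollary is stated without proof in the paper, precisely because it is an immediate unwinding of the construction of $K$ together with \eqref{e:C_morphisms} and \deref{d:Chat}, exactly as you describe. One small quibble: \leref{l:proj_Y} concerns the projector $P_Y\in\End_{Z(\A)}(I(Y))$, not the projector $P\in\Hom_{\C(S^1)}({\bf Y},{\bf Y})$ of \firef{f:P}; the idempotency of the latter is asserted directly in the proof of \thref{t:drinfeld_center} (and follows from \coref{c:B_p}), so you should cite that instead.
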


%%%%%%%%%%%%%%%%%%%%%%%%%%%%%%%%%%%%%%%%%%%%%%%%%%%%%%%%%%%%%%
\section{Extended theory: excited states}\label{s:excited}
%%%%%%%%%%%%%%%%%%%%%%%%%%%%%%%%%%%%%%%%%%%%%%%%%%%%%%%%%%%%%%

Let  now $\Si$ be an oriented surface with boundary. Recall that then,
for every choice of boundary conditions $\VV\in \Obj \Chat (\del \Si)$, we
have the vector space of string nets  $\Hs(\Si;\VV)$ (see 
\deref{d:string-net}). We can extend this to the pseudo-abelian 
completion $\C(\del \Si)$ as follows. 

\begin{definition}\label{d:strings_boundary}
  Let $\Si$ be an oriented surface (possibly with boundary) and 
  $Y\in \Obj \C (\del \Si)$. Define 
$$ 
\Hs(\Si,Y)=\VGr(\Si,Y)/N
$$
where 
\begin{itemize}
\item $\VGr(\Si,Y)$ is the vector space of formal linear combinations of
pairs 
  $\hat\Ga=(\ph, \Ga)$, where $\Ga$ is a colored graph on $\Si$ with 
  some boundary value $\VV$ and $\ph\in\Hom_{\C(\del \Si)}(\VV, Y)$
  
\item $N$ is the subspace of local relations, spanned by the same local
relations as in \deref{d:string-net}  (coming from embedded disks $D\subset
\Si$) and additional relation 
  \begin{equation}\label{e:strings_boundary}
    (\ph f, \Ga)=(\ph, f\Ga)
  \end{equation}
where $\Ga$ is a colored graph with boundary value $\VV$ and 
$f\in \Hom_{\C(\del \Si)}(\VV, \VV')=\Hs(\del \Si\times I, \VV^*, \VV')$,
$\ph\in \Hom_{\C(\del \Si)}(\VV', Y)$. Here $f\Ga$ means the graph obtained by composing $\Ga$ and $f$. 
\end{itemize}
\end{definition}
It is immediate from the definition that for $Y\in \hat \C(\del \Si)$, this
definition coincides with \deref{d:string-net}; it is also obvious that
this definition is functorial in $Y$.

Note that if we  choose, for every boundary circle $(\del \Si)_a$ of $\Si$,
an orientation-preserving homeomorphism $\psi_a\colon (\del \Si)_a\to
S^1$, then by  \thref{t:drinfeld_center}, \thref{t:Chat}, this gives rise
to an equivalence 
\begin{equation}\label{e:C}
K_\psi\colon  Z(\A)^{\boxtimes A}\simeq \C(\del \Si)
\end{equation}
 where $A$ is the set of boundary components of $\del \Si$. Thus, given a
collection of objects $Y_a\in Z(\A)$, $a\in A$, we can define the space 
\begin{equation}
 \Hs(\Si, \{\psi_a\}, \{Y_a\})=\Hs(\Si, K_\psi(\boxtimes Y_a))
\end{equation}

The space $\Hs(\Si, \{\psi_a\}, \{Y_a\})$ admits an alternative definition.
Namely, let $\Sihat$ be the closed surface obtained by gluing to $\Si$ a
copy of the standard 2-disk $D$ along each boundary circle $(\del \Si)_a$
of $\Si$, using parametrization $\psi_a$. So defined surface comes with a
collection of marked points $p_a=\psi_a^{-1}(p)$, where $p=(1,0)$ is the
marked point on $S^1$. Moreover, for every  point $p_a$ we also have a
distinguished  ``tangent direction'' $v_a$ at $p_a$ (in PL setting, we
understand it as a germ of an arc staring at $p_a$), namely the direction
of the radius connecting $p$ with the center of the disk $D$. We will refer
to the collection  $(\Sihat, \{p_a\}, \{v_a\})$ as an {\em extended
surface}. It is easy to see that given  $(\Sihat, \{p_a\}, \{v_a\})$, the
original surface $\Si$ and parametrizations $\psi_a$ are defined uniquely
up to a contractible set of choices.

Given such an extended surface $\Sihat$ and a colection of objects $Y_a\in
Z(\A)$, one object for each marked point $p_a$, define the string net space
\begin{equation}\label{e:hhs}
\Hhs(\Sihat, \{p_a\}, \{v_a\}, \{Y_a\})=
\VGr'(\Sihat, \{p_a\}, \{v_a\}, \{Y_a\})/N
\end{equation}
where 
\begin{itemize}
\item $\VGr'(\Sihat, \{p_a\}, \{v_a\}, \{Y_a\})$ is the vector space of
formal linear combinations of colored graphs on $\Sihat$ such that each
colored graph has an uncolored one-valent vertex at each point $p_a$, with
the corresponding edge coming from direction $v_a$ (i.e., in some
neighborhood of $p_a$, the edge coincides with the corresponding arc) and
colored by the object $F(Y_a)$ as shown in  \firef{f:marked_point}.
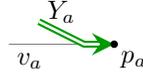
\begin{figure}[ht]
$$
%%%%%%%%%%%%%
\begin{tikzpicture}
\draw[thin, gray] (-1.4,0)--(0,0) node[pos=0.2, below, black] {$v_a$};
\draw[drinfeld center, ->] (-1,0.3)--(-0.4,0) node[pos=0.5, above] {$Y_a$}
                               --(0,0);
\node[dotnode, label=-45:$p_a$] at (0,0) {};
\end{tikzpicture}
%%%%%%%%%%%%%
$$
\caption{Colored graphs in a neigborhood of marked point}
\label{f:marked_point}
\end{figure}

\item $N$ is the subspace of local relations, spanned by the same local
relations as in \deref{d:string-net}  coming from embedded disks $D\subset
\Si$ not containing the special points $p_a$ and additional local relations
in a neighborhood of each marked point  $p_a$ shown in
\firef{f:local_rels_marked}. 

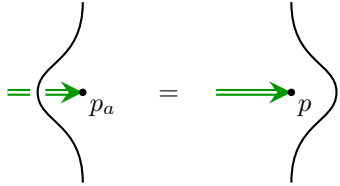
\begin{figure}[ht]
$$
\begin{tikzpicture}
\draw[drinfeld center, ->] (-1,0)--(0,0);
\node[dotnode, label=-45:$p_a$] at (0,0) {};
   \draw[overline] (0,1.2)..controls +(-90:0.8cm) and +(90:0.4cm) ..
                   (-0.6, 0) ..controls +(-90:0.4cm) and +(90:0.8cm) ..
                   (0,-1.2);
 \end{tikzpicture}
%%%%%%%%%%%%%%%%%%
\quad=\quad
%%%%%%%%%%%%%%%%%%
  \begin{tikzpicture}
\draw[drinfeld center, ->] (-1,0)--(0,0);
\node[dotnode, label=-45:$p$] at (0,0) {};
    \draw (0,1.2)..controls +(-90:0.8cm) and +(90:0.4cm) ..
                   (0.6, 0) ..controls +(-90:0.4cm) and +(90:0.8cm) ..
                   (0,-1.2);
  \end{tikzpicture}
$$
\caption{Extra local relation near  marked point}\label{f:local_rels_marked}
\end{figure}

\end{itemize}

\begin{theorem}\label{t:hhs}
  Let $\Si$, $\Sihat$ be as above, and let $Y_a$, $a\in A$, be  a collection of
  objects in $Z(\A)$, one object for each boundary component of $\Si$. 
Then one has a canonical isomorphism  
  \begin{equation}
  \Hs(\Si, \{\psi_a\}, \{Y_a\})\simeq \Hhs(\Sihat, \{p_a\}, \{v_a\}, \{Y_a\})
  \end{equation}
  
  \end{theorem}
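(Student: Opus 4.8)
The plan is to construct mutually inverse linear maps between $\Hs(\Si, \{\psi_a\}, \{Y_a\})=\Hs(\Si, K_\psi(\boxtimes Y_a))$ and $\Hhs(\Sihat, \{p_a\}, \{v_a\}, \{Y_a\})$, working one boundary circle at a time. Recall from \deref{d:strings_boundary} that an element of $\Hs(\Si, K_\psi(\boxtimes Y_a))$ is represented by a pair $(\ph, \Ga)$, where $\Ga$ is a colored graph on $\Si$ with some boundary value $\VV$ and $\ph\in \Hom_{\C(\del\Si)}(\VV, K_\psi(\boxtimes Y_a))$. By \coref{c:P}, for each circle we may take $\ph$ to be represented by a string net on $(\del\Si)_a\times I$ landing on ${\bf Y}_a$ (i.e. $F(Y_a)$ placed near the marked point) and satisfying $P\ph=\ph$ with $P$ the projector of \firef{f:P}. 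Gluing this collar $(\del\Si)_a\times I$, together with the disk $D_a$ into which $\ph$ maps, onto $\Si$ realizes $\Sihat$, and the composite string net lives on $\Sihat$ with a $Y_a$-colored leg running into $p_a$ along direction $v_a$ — exactly an element of $\VGr'(\Sihat,\dots)$. This is the forward map $\Phi$.

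First I would check that $\Phi$ is well defined. The local relations defining $\Hs(\Si, K_\psi(\boxtimes Y_a))$ are of two kinds: the disk relations of \deref{d:string-net}, which map to disk relations on $\Sihat$ supported away from the $p_a$ (since the gluing is by homeomorphism); and the relation \eqref{e:strings_boundary}, $(\ph f, \Ga)=(\ph, f\Ga)$, which becomes a tautology after gluing because sliding a collar morphism $f$ across the boundary circle is an isotopy in $\Sihat$. The one genuinely new relation to verify is \firef{f:local_rels_marked} near $p_a$: I would show it follows from the defining property $P\ph=\ph$ of the representative chosen via \coref{c:P}, unwinding the picture of $P$ in \firef{f:P} (the ``regular'' circle around the $Y_a$-line) using \coref{c:B_p} exactly as in the proof that a regular loop can be slid across an edge. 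Conversely, to build $\Psi\colon \Hhs\to\Hs(\Si,\dots)$, given a graph on $\Sihat$ with a $Y_a$-leg into each $p_a$, I would cut out a small disk around each $p_a$, view what remains on $D_a$ together with the collar as a morphism $\VV\to {\bf Y}_a$ in $\Chat$, apply the idempotent $P$ (legitimate after passing to the pseudo-abelian completion, where $K(Y_a)=\im P$), and read off an element $(\ph,\Ga)$; the extra relation \firef{f:local_rels_marked} is precisely what makes applying $P$ harmless, so $\Psi$ is well defined.

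Then I would verify $\Psi\Phi=\id$ and $\Phi\Psi=\id$. One composite is essentially immediate from the construction; the other requires showing that inserting the collar and then collapsing it back is the identity, which is an isotopy argument together with one use of \coref{c:B_p} (a regular loop bounding a trivial region evaluates to $\DD^2$, cancelling the normalization $\tfrac{1}{\DD^2}$ in \firef{f:P}). Throughout, since the two surfaces differ only near the boundary, everything localizes: it suffices to treat a single boundary circle, i.e. to prove the homeomorphism $\Hs(S^1\times I; \VV^*, {\bf Y})/(\text{relations}) \cong$ (string nets on $D$ with a $Y$-leg into the center), and then assemble over $a\in A$ using \thref{t:Chat}(2) and \leref{l:puncture}.

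I expect the main obstacle to be the careful bookkeeping of \emph{which} representative of a morphism in $\C(\del\Si)$ one glues in, and checking that the answer is independent of that choice — in other words, that the image idempotent $P$ of \firef{f:P} interacts correctly with the local relations on $\Sihat$. Concretely, the crux is the equivalence, near $p_a$, between ``$\ph$ satisfies $P\ph=\ph$'' on the collar side and ``the local relation of \firef{f:local_rels_marked} holds'' on the $\Sihat$ side; both express the same fact that the $Y_a$-line may be dragged freely past a regular (summed-over-simples) strand, but reconciling the normalizations and orientations takes care. Once that single local statement is nailed down, the global isomorphism follows formally from the cut-and-glue description of $\Sihat$ and the results of \seref{s:boundary}.
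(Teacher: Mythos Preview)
Your approach is essentially the paper's: construct a forward map by gluing the collar representative of each $\ph_a$ (using \coref{c:P}) into the filled-in disk, and an inverse by restricting to $\Si$ and taking the canonical morphism ${\bf Y_a}\to K(Y_a)$. The paper's inverse $S$ is phrased slightly more tersely --- it writes $\Ga\mapsto(\{\id\},\Ga')$ without explicitly mentioning the application of $P$ --- but this amounts to the same thing, and your identification of the crux (that $P\ph=\ph$ on the collar side is equivalent to the relation of \firef{f:local_rels_marked} on the $\Sihat$ side) is exactly right.
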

   \begin{proof}
   Denote for brevity
   \begin{align*}
   \Hs&=\Hs(\Si, \{\psi_a\}, \{Y_a\})\\
   \Hhs&=\Hhs(\Sihat, \{p_a\}, \{v_a\}, \{Y_a\})
   \end{align*}
   
    By definition, the space $\Hs$ is defined as
    the vector space of pairs $(\{\ph_a\}, \Ga)$ modulo local relations;
    here $\Ga$ is a colored graph on $\Si$ with boundary value
    $\VV=\{\VV_a\}, a\in A$, and   
    $$
    \ph_a\in \Hom_{\C(S^1)} (\VV_a, K(Y_a))=    
    \{f\in \Hom_{\Chat(S^1)} (\VV_a, {\bf Y_a})\st Pf=f\}
    $$ 
    where $\bf Y$, $P$ are as in \coref{c:P}. 
   
    Construct the map 
    $T\colon \Hs\to \Hhs$ by 
    $$
    T(\{\ph_a\}, \Ga)=\Ga'
    $$
    where the graph $\Ga'$ is given by \firef{f:hhs1} in the neighborhood 
    of the glued disk $D_a$ and coincides with $\Ga$ elsewhere.
\begin{figure}[ht]
$$
%%%%%%%%%%%%%%%%%%%%%%
\begin{tikzpicture}
\node[dotnode, label=-45:$p_a$] (p) at (0.5,0) {};
\draw (60:0.5cm) -- +(60:0.4cm);
\draw (120:0.5cm) -- +(1200:0.4cm);
\draw (200:0.5cm) -- +(200:0.4cm);
\draw (-50:0.5cm) -- +(-50:0.4cm);
\draw[boundary] (0,0) circle(0.5cm);
\node at (-0.2,1.2) {$\Ga$};
 \end{tikzpicture}
%%%%%%%%%%%%%%%%%%%%
\mapsto 
%%%%%%%%%%%%%%%%%%%%%%
\begin{tikzpicture}
\fill[even odd rule,subgraph] (0,0) circle (.6cm) (0,0) circle (1.6cm);
\node (ph) at (-1,0) {$\ph_a$};
\draw[drinfeld center] (-0.6,0)--(0.5,0) node[pos=0.5,above] {$Y_a$};
\node[dotnode, label=-45:$p_a$] (p) at (0.5,0) {};
\draw[edge] (0,0) circle(0.5cm);
\draw (60:1.6cm) -- +(60:0.4cm);
\draw (120:1.6cm) -- +(1200:0.4cm);
\draw (200:1.6cm) -- +(200:0.4cm);
\draw (-50:1.6cm) -- +(-50:0.4cm);
 \end{tikzpicture}
$$
\caption{Map $T\colon \Hs\to \Hhs$. The shaded area represents $\ph_a\in \Hom_{\C(S^1)} (\VV_a, K(Y_a))$.}\label{f:hhs1}
\end{figure}

    It is immediate from the definition that this map is well defined and
surjective. To prove that it is actually an isomorphism, construct now the 
map  
    \begin{align*}
    S\colon \Hhs(\Sihat, \{p_a\}, \{v_a\}, \{Y_a\})&\to 
                 \Hs(\Si, \{\psi_a\}, \{Y_a\}\\
    \Ga &\mapsto (\{\id\}, \Ga')
    \end{align*}
    where $\Ga'$ is given by \firef{f:hhs2} in the neighborhood 
    of the boundary component $(\del \Si)_a$ and coincides with $\Ga$ elsewhere.    
\begin{figure}[ht]
$$
%%%%%%%%%%%%%%%%%%%%%%
\begin{tikzpicture}
\draw[drinfeld center] (-0.8,0)--(0.5,0) node[pos=0.5,above] {$Y_a$};
\node[dotnode, label=-45:$p_a$] (p) at (0.5,0) {};
\draw[edge] (0,0) circle(0.5cm);
 \end{tikzpicture}
 %%%%%%%%%%%%%%%%%%%%%%
\mapsto
%%%%%%%%%%%%%%%%%%%%
 \begin{tikzpicture}
\draw[drinfeld center] (-0.8,0)--(-0.5,0) node[pos=0,above] {$Y_a$};
\node[dotnode, label=-45:$p_a$] (p) at (0.5,0) {};
\draw[boundary] (0,0) circle(0.5cm);
 \end{tikzpicture}
$$
\caption{Map $S\colon \Hhs\to \Hs$}\label{f:hhs2}
\end{figure}

    It is easily checked that the map $S$ is well defined and is inverse to
$T$. This completes the construction of the isomorphism.  
\end{proof}

We can now prove the second main result of the paper, extending
\thref{t:main} to the case of surfaces with boundary. Recall that one can
extend Turaev--Viro theory, defining vector spaces $Z_{TV}(\Si,
\{Y_a\}_{a\in A})$ for a surface $\Si$ with boundary together with a choice
of marked point on each boundary component and a choice of an object
$Y_a\in Z(\A)$ for each boundary component (see \ocite{balsam-kirillov}).
Since a choice of a parameterization $\psi_a\colon (\del \Si)_a\to S^1$
also determines a marked point $p_a=\psi_a^{-1}(p)$, $p=(1,0)\in S^1$, we
can also define the space $Z_{TV}(\Si, \{Y_a\})$ for a surface with a
parametrized boundary.   

\begin{theorem}\label{t:main2}
Let $\Si$ be a compact oriented  surface  with boundary together with a
choice of a parameterization $\psi_a\colon (\del \Si)_a\to S^1$ for  each
boundary component and a choice of an object $Y_a\in Z(\A)$ for each
boundary component. Then one has a canonical isomorphism  
$$
Z_{TV}(\Si, \{Y^*_a\}_{a\in A})\simeq \Hs(\Si, \{Y_a\})
$$
where $A$ is the set of connected components of the boundary of $\Si$.
\end{theorem}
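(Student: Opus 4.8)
The plan is to adapt the proof of \thref{t:main} from \seref{s:main} to the extended surface $\Sihat$. By \thref{t:hhs} there is a canonical isomorphism $\Hs(\Si,\{Y_a\})\simeq\Hhs(\Sihat,\{p_a\},\{v_a\},\{Y_a\})$, so it suffices to produce a canonical isomorphism
$$
Z_{TV}(\Si,\{Y_a^*\}_{a\in A})\simeq\Hhs(\Sihat,\{p_a\},\{v_a\},\{Y_a\}).
$$
From now on I work on the closed surface $\Sihat$ carrying the marked points $p_a$, their tangent germs $v_a$, and the labels $Y_a\in Z(\A)$; the dual $Y_a^*$ appearing on the Turaev--Viro side is the standard effect of the orientation reversal built into the gluing $\Sihat=\Si\cup_a D_a$ and into the convention $\Hom_{\Chat(S)}(\VV,\VV')=\Hs(S\times I;\VV^*,\VV')$ of \deref{d:Chat}.

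First I would choose a PLCW decomposition $\De$ of $\Sihat$ such that each marked point $p_a$ is a vertex of $\De$ and the germ $v_a$ runs along an edge of $\De$; by an analogue of \leref{l:main1} (elementary moves away from the $p_a$, together with the moves relating two such decompositions near a $p_a$) the resulting map will not depend on this choice. Recall from \ocite{balsam-kirillov} that for such a decomposition $Z_{TV}(\Si,\{Y_a^*\})$ is the image of the extended--theory cylinder operator $A$ acting on a state space $\HTV(\Sihat,\De;\{Y_a^*\})$ built exactly as in \seref{s:TV}, except that at each marked vertex $p_a$ the local datum is a morphism that also involves $F(Y_a^*)$ and the half--braiding $\ph_{Y_a}$; graphically (\firef{f:crossing}, \firef{f:marked_point}) this is precisely the local model used to define $\Hhs$ in \seref{s:excited}. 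Next, let $\HhsD$ be $\Hhs$ with an extra puncture added at every vertex of $\De$ other than the $p_a$, the marked structure being kept at each $p_a$. As in \leref{l:puncture} there is a natural surjection $\HhsD\surjto\Hhs$, and as in \leref{l:HsD} the space $\HhsD$ is the quotient of formal linear combinations of colored graphs transversal to $\De$ by the in--cell local relations, the ``move a vertex through an edge'' relation, and, in a neighbourhood of each $p_a$, the sliding relation of \firef{f:local_rels_marked}.

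Then I would run the three--lemma scheme of \seref{s:main}. The map $\pi_\De$ of \eqref{e:TVtoHS}, \eqref{e:TVtoHS2} extends with no change (the $\sqrt{d_l}$ normalization on ordinary edges, and the obvious treatment of the $F(Y_a)$--leg at $p_a$), giving $\pi_\De\colon\HTV(\Sihat,\De;\{Y_a^*\})\to\HhsD$. For the analogue of \leref{l:main2} one builds the inverse by the edge--by--edge fusion map $s$ of that proof; the \emph{only} new verification is that $s$ is compatible with the marked--point relation of \firef{f:local_rels_marked}, which is \leref{l:edge_crossing} together with the half--braiding axiom for $Y_a$ --- i.e.\ exactly the computation already carried out in \thref{t:drinfeld_center} and recorded in \coref{c:P}. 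The analogue of \leref{l:main3} identifies $A$ with $\bigl(\prod_{p\in\De^0\setminus\{p_a\}}B_p\bigr)$ composed with, at each $p_a$, the projector $P$ of \firef{f:P}; by \firef{f:local_rels_marked} this last factor acts as the identity on $\Hhs$, so $A$ corresponds to the product of the $B_p$ over the unmarked vertices. Finally the analogue of \thref{t:B_p} gives $\im(A)\simeq\Hhs$, completing the proof.

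The main obstacle will be the bookkeeping at the marked points: one has to check that the extended Turaev--Viro state space and cylinder operator of \ocite{balsam-kirillov}, evaluated on a cell decomposition whose vertices include the $p_a$, genuinely match the ``$F(Y_a)$--leg plus half--braiding'' local model of \seref{s:excited} --- concretely, that the fusion map $s$ intertwines the sliding relation of \firef{f:local_rels_marked} with the half--braiding $\ph_{Y_a}$, and that the cylinder operator, restricted to a neighbourhood of $p_a$, is the projector $P$ of \firef{f:P}. Once this single compatibility is established, everything else is a direct transcription of \seref{s:main}.
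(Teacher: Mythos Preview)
Your overall strategy---running the three-lemma scheme of \seref{s:main} on the extended surface $\Sihat$---coincides with the paper's. The gap is in your analogue of \leref{l:main2}. By puncturing only at the unmarked vertices and \emph{retaining} the sliding relation of \firef{f:local_rels_marked} at each $p_a$, your $\HhsD$ is already a proper quotient of $\HTV(\Si,\De,\{Y_a^*\})$, so no edge-by-edge fusion map $s$ can be an inverse to $\pi_\De$. Concretely, the two sides of \firef{f:local_rels_marked} are sent by $s$ to distinct vectors of $\HTV$: that state space is a bare direct sum of tensor products of hom spaces in $\A$ (the glued disk $D_a$ contributes $\langle F(Y_a)^*,X_1,\dots,X_n\rangle$ as in \firef{f:dual_graph2}), and carries no half-braiding relation whatsoever before the cylinder operator is applied. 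Thus the ``compatibility of $s$ with \firef{f:local_rels_marked}'' that you flag as mere bookkeeping is in fact false, and neither \leref{l:edge_crossing} nor the half-braiding axiom will rescue it.

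The paper's remedy is to define $\HhsD(\Sihat,\{Y_a\})$ using local relations only for disks $D\subset\Sihat-\De^0$ and explicitly \emph{not} imposing \firef{f:local_rels_marked}. Then the isomorphism $\HhsD\simeq\HTV$ goes through exactly as in \leref{l:main2}, the only addition being the extra $Y_a$-leg in the dual graph of the glued disk (\firef{f:dual_graph2}). The relation \firef{f:local_rels_marked} is instead enforced by a projector $B_{p_a}$ at each marked vertex---a regular loop around $p_a$ crossing the $Y_a$-leg via the half-braiding (\firef{f:B_p_a})---which is genuinely nontrivial on $\HhsD$. The full product $B=\prod_{p\in\De^0}B_p$, marked vertices included, is then identified with the cylinder operator by the analogue of \leref{l:main3}, and $\im(B)\simeq\Hhs$ by the analogue of \thref{t:B_p}. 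In short: the marked-point relation must be \emph{produced} by the projector, not presupposed in the intermediate space.
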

\begin{proof}

The proof repeats with necessary changes the proof of \thref{t:main}. We
outline the main steps below, stressing the changes. 

First, we need to choose a cell decomposition $\De$ of $\Si$ such that for
each boundary circle, the marked point $p_a$ is one of the vertices of
$\De$. This  this also gives a cell decomposition $\hat \De$ of the surface
$\Sihat$ obtained by gluing a disk to every boundary component of $\Si$
(see \thref{t:hhs}). As in \ocite{balsam-kirillov}, this allows us to
define the vector space $\HTV(\Sihat, \{Y^*_a\})$. As before, we let
$\De^0$ be the set of all vertices of $\De$.

Define now the string net space $\HhsD(\Sihat, \{Y_a\})$ as the vector
space of colored graphs $\Ga\in \Sihat-\De^0$ such that in  a neighborhood
of each marked point $p_a$ the graph looks as  shown in
\firef{f:marked_point},  modulo the same local relations as in
\deref{d:string-net}, for any embedded disk $D\subset \Sihat-\De^0$ (note
that we do not impose extra local relation  of
\firef{f:local_rels_marked}). 

Then we have the following results:

\begin{enumerate}
\item Define, for every point $p\in \De^0$, the operator $B_p$ by
\firef{f:B_p_a} if $p=p_a$ is a marked point and  by the same formula as in
\thref{t:B_p} for other vertices. 
\begin{figure}[ht]
$$
%%%%%%%%%%%%%%%%%%%%
\begin{tikzpicture}
\draw[drinfeld center] (-1.2,0)--(0,0) node[pos=0.2, above] {$Y_a$};
\node[dotnode, label=45:$p_a$] at (0,0) {};
\end{tikzpicture}
%%%%%%%%%%%%%%%%%%%%
\qquad\mapsto\quad
\frac{1}{\DD^2}\quad
%%%%%%%%%%%%%%%%%%%%
\begin{tikzpicture}
\draw[drinfeld center] (-1.2,0)--(0,0) node[pos=0.2, above] {$Y_a$};
\node[dotnode, label=45:$p_a$] at (0,0) {};
\draw[overline,regular] (0,0) circle(0.6cm);
\end{tikzpicture}
%%%%%%%%%%%%%%%%%%%%
$$
\caption{Operator $B_p$ for the marked point $p=p_a$.}\label{f:B_p_a}
\end{figure}

Then operators $B_p$ are mutually commuting projectors, and we have a
natural isomorphism  
$$
\Hs(\Si, \{Y_a\})\simeq \Hhs(\Sihat, \{Y_a\})=\im(B)
$$
where 
$$
B=\prod_{p\in \De^0} B_p\colon \HhsD(\Sihat, \{Y_a\})\to \HhsD(\Sihat, \{Y_a\})
$$
(compare with \thref{t:B_p}).

The proof of this result is quite similar to the proof of  \thref{t:B_p};
details are left to the reader. 

\item One has a  natural isomorphism $\HhsD(\Sihat, \{Y_a\})\simeq
\HTV(\Si, \De, \{Y^*\})$ 

The isomorphism is constructed in the same way as in \leref{l:main2}, with the only change that for the glued disk $D_a$ (which is a cell of the decomposition $\hat\De$), we add an extra edge to the dual graph $\Ga_\De$ and coloring of the vertex is as shown in \firef{f:dual_graph2}.
\begin{figure}[h]
%%%%%%%%%%%%%%
\begin{tikzpicture}
\node[small_morphism] (O) at (0,0) {$\ph$};
\node[dotnode](p) at (0:1.2cm) {};
\path (p) node[right] {$p_a$};
\draw[drinfeld center, <-] (O) --(p);\foreach \i in {1,...,5}
   {
    \pgfmathsetmacro{\u}{(\i-1)*72}
    \pgfmathsetmacro{\v}{\u+72}
    \pgfmathsetmacro{\w}{\u+36}
    \draw[edge, ->] (\u:1.2cm)--(\v:1.2cm);
    \draw[->] (O)--(\w:2cm)  node[pos=0.7,auto=left]{$X_\i$};
    }

\end{tikzpicture}
%%%%%%%%%%%%%%
\qquad 
$\ph\in \<F(Y_a)^*,X_1,\dots, X_5\>=\Hom_\A(F(Y_a), X_1\otimes\dots\otimes X_5)$
\caption{Coloring of the dual graph for the embedded disk.}\label{f:dual_graph2}
\end{figure}
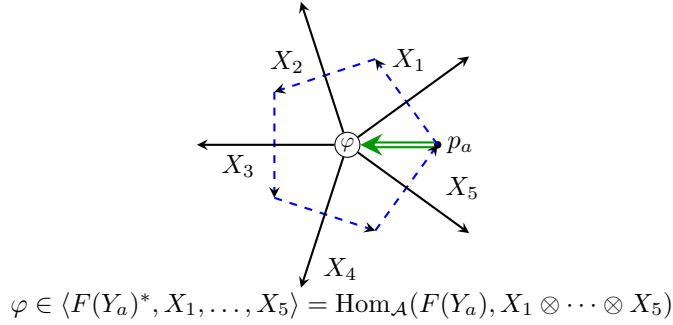

\item  The isomorphism of the previous part identifies the operator $B$
with $\ZTV(\Si\times I)\colon H_{TV}(\Si, \De,\{Y^*\})\to H_{TV}(\Si,
\De,\{Y^*\})$ 

This is proved in the same way as \leref{l:main3}

\end{enumerate} 
\end{proof}

%%%%%%%%%%%%%%%%%%%%%%%%%%%%%%%%%%%%%%%%%%%%%%%%%%%%%%%%%%%%%%
\section{Drinfeld center: technical lemmas}
\label{s:drinfeld_center}

In this section we collect some basic facts about the Drinfeld center of a fusion category, which were used in the proof of \thref{t:drinfeld_center}. Throughout this section, $\A$ be a spherical fusion category over an algebraically closed  field of characteristic zero.

Recall that the Drinfeld center $Z(\A)$ of a fusion category $\A$
is defined as the category whose objects are pairs $(Y,\ph_Y)$, where $Y$
is an object of $\A$ and $\ph_Y$ -- a functorial isomorphism $Y\otimes -\to
-\otimes Y$ satisfying certain compatibility conditions (see
\ocite{muger1}). We will refer to  $\ph_Y$ as ``half-braiding''.

\begin{theorem}{\ocite{muger2}}
  Let $\A$ be a spherical fusion category over an algebraically closed   
  field of characteristic zero.Then  $Z(\A)$ is a modular category; in 
  particular, it is semisimple with finitely many simple objects, it is 
  braided and has a pivotal structure which coincides with the pivotal 
  structure on $\A$. 
\end{theorem}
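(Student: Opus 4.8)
This statement is due to Müger \ocite{muger2}; since it underpins everything in this section, I will sketch how the proof goes. There are three things to establish: that $Z(\A)$ is a fusion category, that it carries a braiding compatible with a spherical structure, and that this braiding is non-degenerate. The braiding is the only part that is essentially free: for objects $(Y,\ph_Y)$, $(Z,\ph_Z)$ of $Z(\A)$ the half-braiding component $\ph_Y(Z)\colon Y\ttt Z\to Z\ttt Y$ is by definition a morphism in $Z(\A)$, it is natural in both variables, and the two hexagon identities for a braiding are exactly the compatibility axioms built into the notion of a half-braiding. So the real work is (a) fusion-ness, (b) the spherical structure, and (c) non-degeneracy.

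For (a) the cleanest route is to identify $Z(\A)$ with the category of $\A$-bimodule endofunctors of $\A$ --- equivalently, with the dual fusion category of $\A\boxtimes\A^{\mathrm{rev}}$ with respect to the regular module category $\A$ (this identification is standard; see \ocite{muger1}). Since $\A$ is a fusion category, $\A\boxtimes\A^{\mathrm{rev}}$ is fusion and $\A$ is an indecomposable \emph{exact} (indeed semisimple) module category over it; hence its dual category is again fusion, i.e.\ semisimple with finitely many simple objects and rigid. One can also see the pieces of this directly: the forgetful functor $F\colon Z(\A)\to\A$ is $\kk$-linear, faithful and exact, rigidity of $Z(\A)$ is inherited from that of $\A$ (the dual of $(Y,\ph_Y)$ is $(Y^*,\ph_{Y^*})$ with the half-braiding obtained by bending strands), and $F$ has a right adjoint $I\colon\A\to Z(\A)$, explicitly $I(X)=\bigoplus_{i\in\Irr(\A)}X_i\ttt X\ttt X_i^*$ with the evident half-braiding; the adjunction then embeds every simple object of $Z(\A)$ into some $I(X_i)$, which has finite length, so there are only finitely many isomorphism classes of simple objects.

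For (b): since we have assumed $\A$ strict pivotal, so that $V^{**}=V$, and $F$ is strict monoidal with $F(Y)=Y$, we automatically get $Y^{**}=Y$ in $\A$; naturality of the half-braiding applied to the pivotal isomorphisms shows $\ph_{Y^{**}}=\ph_Y$ under this identification, so the pivotal structure of $\A$ lifts to one on $Z(\A)$ intertwined by $F$. It is spherical: the left and right categorical traces of an endomorphism $f$ of an object of $Z(\A)$ are carried by the (strict monoidal, pivotal) functor $F$ to the corresponding traces of $F(f)$ in $\A$, which agree because $\A$ is spherical, and since $F$ restricts to an isomorphism $\End_{Z(\A)}(\one)\isoto\End_\A(\one)=\kk$ the equality already holds in $Z(\A)$. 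Together with the braiding this also equips $Z(\A)$ with a canonical ribbon (twist) structure, so the $S$-matrix is defined.

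The remaining point, (c), is the genuine obstacle and the only place real input about fusion categories over a field of characteristic zero is needed. The plan is a dimension count: using the adjunction between $F$ and $I$ and the results of \ocite{ENO2005} one has $\dim Z(\A)=(\dim\A)^2=\DD^4$, which is nonzero (as already recorded after \eqref{e:DD}); by a theorem of Müger (see also \ocite{ENO2005}), once $\dim\A\ne 0$ the center $Z(\A)$ is automatically non-degenerate --- equivalently, the $S$-matrix assembled from Hopf links of simple objects is invertible. Concretely one checks that a transparent object $(Y,\ph_Y)$, i.e.\ one whose double braiding with every object is the identity, forces $\ph_Y$ to be the trivial half-braiding and hence $Y\cong\one^{\oplus n}$. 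Combining (a), (b) and (c) exhibits $Z(\A)$ as a modular category whose spherical structure restricts to that of $\A$, which is the assertion. I expect step (c) to be where all the difficulty sits; everything else is a formal manipulation with the definition of the center.
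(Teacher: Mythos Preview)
Your sketch is a reasonable outline of M\"uger's argument, but you should be aware that the paper itself offers no proof whatsoever of this statement: it is simply quoted as a known result with a citation to \ocite{muger2}, and the exposition immediately moves on to the adjoint functor $I$. So there is nothing to compare your approach against; you have written considerably more than the paper does.

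On the substance of your sketch: the three-part breakdown (fusion, spherical/ribbon, non-degeneracy) is the right organization, and each part is handled correctly at the level of an outline. Identifying $Z(\A)$ with the dual of $\A\boxtimes\A^{\mathrm{rev}}$ acting on $\A$ is indeed the cleanest way to get semisimplicity and finiteness in one stroke; the alternative direct argument via the adjoint $I$ that you mention is exactly the functor the paper constructs explicitly in \thref{t:I}, so that fits well with the surrounding exposition. Your treatment of the spherical structure and of non-degeneracy via the dimension formula $\dim Z(\A)=(\dim\A)^2$ from \ocite{ENO2005} is also the standard modern route. If anything, your closing remark that step (c) is ``where all the difficulty sits'' slightly undersells step (a): proving semisimplicity of the center directly (without invoking duals of module categories) is itself nontrivial and was one of the main contributions of M\"uger's paper.
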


We have an obvious forgetful functor $F\colon Z(\A)\to \A$. To simplify
the notation, we will frequently omit it in the formulas, writing for
example $\Hom_\A(Y,V)$ instead of $\Hom_\A(F(Y),V)$, for $Y\in \Obj
Z(\C)$, $V\in \Obj \A$. Note, however, that if $Y,Z\in \Obj Z(\A)$, then
$\Hom_{Z(\A)}(Y,Z)$ is different from $\Hom_{\A}(Y,Z)$: namely,
$\Hom_{Z(\A)}(Y,Z)$ is a subspace in $\Hom_{\A}(Y,Z)$ consisting of those
morphisms that commute the with the half-braiding.

The following theorem, proof of which can be found in
\ocite{balsam-kirillov}, gives an explicit description of the functor
$I\colon \A\to Z(\A)$ adjoint to $F$. 
\begin{theorem}\label{t:I}
   Define, for   $V\in \Obj \A$
  \begin{equation}\label{e:I}
    I(V)=\bigoplus_{i\in \Irr(\A)} X_i\otimes V\otimes X_i^*.
  \end{equation}
  Then
  \begin{enumerate}
   \item $I(V)$ has a natural structure of an object of $Z(\A)$, with 
     the half braiding given by \firef{f:I}.
     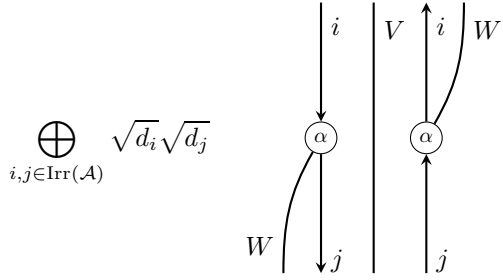
\begin{figure}[ht]
     $$\bigoplus_{i,j\in \Irr (\A)} \sqrt{d_i}\sqrt{d_j} \quad
%%%%%%%%%
\begin{tikzpicture}
\coordinate (bot) at (0,-1.8);
\coordinate (top) at (0,1.8);
\draw (bot)--(top) node[pos=0.9, right] {$V$};
\node[morphism] (ph) at (-0.7,0) {$\al$};
\node[morphism] (ph*) at (0.7,0) {$\al$};
\draw[<-] (ph|-bot) -- (ph) node[pos=0.1,right] {$j$};
\draw[<-] (ph) -- (ph|-top) node[pos=0.8,right] {$i$};
\draw[out=-120, in=90] (ph.-120) to (-1.2,-1.8) node[pos=0.8,left] {$W$};
\draw[->] (ph*|-bot) -- (ph*) node[pos=0.1,right] {$j$};
\draw[->] (ph*) -- (ph*|-top) node[pos=0.8,right] {$i$};
\draw[out=60, in=-90] (ph*.60) to (1.2,1.8) node[pos=0.8,right] {$W$};
\end{tikzpicture}
%%%%%%%%%
     $$
     \caption{Half-braiding $I(V)\otimes W\to W\otimes I(V)$.}
     \label{f:I}
     \end{figure}

   \item So defined functor $I\colon \A\to Z(\A)$ is two-sided adjoint of
the forgetful functor $F\colon Z(\A) \to \A$: one has functorial
isomorphisms
    \begin{equation}
    \begin{aligned}
     \Hom_\A(V,F(X))&\simeq \Hom_{Z(\A)}(I(V),X)\\
          \ph&\mapsto \sum_{i\in \Irr(\A)}\frac{\sqrt{d_i}}{\DD}\quad
%%%%%%%%%%%%
\begin{tikzpicture}
\node[morphism] (ph) at (0,0) {$\ph$};
\draw[drinfeld center] (0,-1)--(ph); \draw (ph)--(0,1);
\draw[overline] (-0.5,1)--(-0.5,0) arc(-180:0:0.5cm) --(0.5,1) node[right]
{$i$};
\end{tikzpicture}
%%%%%%%%%%%%
    \end{aligned}
    \end{equation}
and 
    \begin{equation}
    \begin{aligned}
     \Hom_\A(F(X), V)&\simeq \Hom_{Z(\A)}(X,I(V))\\
          \ph&\mapsto \sum_{i\in \Irr(\A)} \frac{\sqrt{d_i}}{\DD}\quad
%%%%%%%%%%%%
\begin{tikzpicture}
\node[morphism] (ph) at (0,0) {$\ph$};
\draw (0,-1)--(ph)node[pos=0,below]{$V$}; 
\draw[drinfeld center] (ph)--(0,1) node[above] {$X$};
\draw[overline] (-0.5,-1)--(-0.5,0) arc(180:0:0.5cm) --(0.5,-1)
node[right] {$i$};
\end{tikzpicture}
%%%%%%%%%%%%
    \end{aligned}
    \end{equation}
\end{enumerate}
\end{theorem}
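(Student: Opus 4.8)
The plan is to establish both parts by direct computation in the graphical calculus of \thref{t:RT}, the main structural input being the fusion (completeness) relation \eqref{e:local_rels2a}, which resolves a tensor product of simples into a sum over simple objects.

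For part (1), I would write out the candidate half-braiding $\varphi_{I(V)}(W)\colon I(V)\otimes W\to W\otimes I(V)$ exactly as in \firef{f:I}; naturality in $W$ is routine, since the internal sum over dual bases is basis-independent. To see that each $\varphi_{I(V)}(W)$ is invertible I would take the mirror-image diagram as candidate inverse and check $\varphi\circ\varphi^{-1}=\id$ by sliding the two $W$-strands back past each other and collapsing the resulting $\sum_j d_j(\cdots)$ loop via \eqref{e:local_rels2a} (invertibility also follows from a rank count, since $\dim\Hom_\A(X_i\otimes W,X_j)$ matches on the two sides). The unit axiom $\varphi_{I(V)}(\one)=\id_{I(V)}$ is read off by putting $W=\one$: only the $(i,i)$ terms survive and, after the internal summation, reduce to $\id_{X_i\otimes V\otimes X_i^*}$. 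What remains is the hexagon identity $\varphi_{I(V)}(W_1\otimes W_2)=(\id_{W_1}\otimes\varphi_{I(V)}(W_2))\circ(\varphi_{I(V)}(W_1)\otimes\id_{W_2})$: stacking the two half-braiding diagrams introduces a simple object threading $W_1\otimes W_2$, and after resolving $W_1\otimes W_2$ into simples by \eqref{e:local_rels2a} and using the pentagon coherence of $\A$ (the $6j$-relations), the two sides agree. I expect this hexagon to be the main obstacle of part (1): it is the one place where the associativity constraint of $\A$ genuinely enters, and keeping track of the $\sqrt{d_i}$ factors requires care.

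For part (2), I would exhibit the displayed map $\Phi\colon\Hom_\A(V,F(X))\to\Hom_{Z(\A)}(I(V),X)$ together with a candidate inverse $\Psi$ sending $\psi\colon I(V)\to X$ to a suitable multiple of its restriction along the $i=0$ summand $V=X_0\otimes V\otimes X_0^*\hookrightarrow I(V)$ (recall $X_0=\one$), and check three things. First, that $\Phi(\varphi)$ commutes with half-braidings, i.e.\ lies in $\Hom_{Z(\A)}$: this is a ``pull the $X_i$-strand through $W$'' computation in which the half-braiding of $X$ is matched against the half-braiding of $I(V)$ from \firef{f:I}. Second, that $\Psi\circ\Phi=\id$: after restriction only the $i=0$ term of $\Phi(\varphi)$ survives, and the normalization is arranged so that it recovers $\varphi$ --- a one-line graphical check. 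Third, that $\Phi\circ\Psi=\id$: this is the substantive direction. Given a central morphism $\psi\colon I(V)\to X$, compatibility with the half-braidings forces the $X_i$-component of $\psi$ to be obtained from its $i=0$ component by wrapping an $X_i$-loop around via $\varphi_X$ and $\varphi_{I(V)}$; summing over $i$ with weights $d_i$ reproduces $\Phi$ applied to the $i=0$ component. This step --- essentially the statement that the counit $FI\Rightarrow\id$ is split by the $i=0$ inclusion --- is where the centrality of $\psi$ is used in full, and I expect it to be the hardest point of the proof. Functoriality in $V$ and $X$ is then routine, and the second adjunction $\Hom_\A(F(X),V)\simeq\Hom_{Z(\A)}(X,I(V))$ follows either by the mirror argument or, more quickly, from the first together with the canonical identification $I(V)^*\simeq I(V^*)$ in $Z(\A)$ (using $d_{X_i}=d_{X_i^*}$, $\sqrt{d_{X_i}}=\sqrt{d_{X_i^*}}$) and the rigidity of $Z(\A)$.
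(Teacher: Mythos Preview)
The paper does not actually prove this theorem: it only states it, prefaced by ``proof of which can be found in \ocite{balsam-kirillov}''. So there is no in-paper proof to compare against.

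Your outline is correct and is essentially the standard argument one finds in the cited reference (and in M\"uger's original treatment). You have correctly identified the two nontrivial checks: the hexagon axiom in part~(1) and the $\Phi\circ\Psi=\id$ direction in part~(2), where centrality of $\psi$ is what forces all $X_i$-components to be determined by the $i=0$ component. One minor comment: your parenthetical ``rank count'' for invertibility of the half-braiding is not an argument by itself (equal dimensions do not imply a given map is an isomorphism), so you should rely on your explicit inverse and the collapse via \eqref{e:local_rels2a}. Also, the hexagon check is slightly cleaner than you suggest: stacking the two half-braiding diagrams and applying \eqref{e:local_rels2a} once to merge the intermediate pair of dual-basis vertices directly yields the half-braiding for $W_1\otimes W_2$, with no genuine appeal to $6j$-symbols needed.
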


\begin{lemma}\label{l:proj_Y}
Let $Y\in \Obj Z(\A)$. Define $P_Y\in \End_\A(I(Y),I(Y))$ by
$$
P_Y=\sum_{i,j \in \Irr(\A)} \frac{\sqrt{d_i}\sqrt{d_j}}{\DD^2}
%%%%%%%%%%%%
\begin{tikzpicture}
\draw[drinfeld center] (0,-1)--(0,1) node[above] {$Y$};
\draw[overline] (-0.5,1) arc(-180:0:0.5cm) --(0.5,1)
node[right] {$i$};
\draw[overline] (-0.5,-1) arc(180:0:0.5cm) --(0.5,-1)
node[right] {$j$};
\end{tikzpicture}
%%%%%%%%%%%%
$$
Then
\begin{enumerate}
  \item $P_Y\in \End_{Z(\A)}(I(Y))$
  \item $P_Y^2=P_Y$
  \item The image of $P_Y$ is canonically isomorphic to $Y$ as an object
of $Z(\A)$.
\end{enumerate}
\end{lemma}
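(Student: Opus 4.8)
\emph{The plan} is to realize $P_Y$ as the canonical idempotent that splits $Y$ off $I(F(Y))$, built from the two adjunctions of \thref{t:I}. Write $F\colon Z(\A)\to\A$ for the forgetful functor. Let $\eps_Y\in\Hom_{Z(\A)}(I(F(Y)),Y)$ be the image of $\id_{F(Y)}$ under the isomorphism $\Hom_\A(F(Y),F(Y))\simeq\Hom_{Z(\A)}(I(F(Y)),Y)$ of \thref{t:I} (the counit of $I\dashv F$), and let $\eta_Y\in\Hom_{Z(\A)}(Y,I(F(Y)))$ be the image of $\id_{F(Y)}$ under $\Hom_\A(F(Y),F(Y))\simeq\Hom_{Z(\A)}(Y,I(F(Y)))$ (the unit of $F\dashv I$). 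Stacking the two graphical formulas of \thref{t:I} with $\ph=\id_{F(Y)}$ produces exactly the diagram defining $P_Y$: the two $\ph$-coupons become a single straight strand, namely the central double line labeled $Y$; the two half-circles become the $i$- and $j$-labeled cap and cup; the points where a half-circle meets the central line are the half-braidings of \firef{f:crossing}; and the scalars $\tfrac{\sqrt{d_i}}{\DD}$ and $\tfrac{\sqrt{d_j}}{\DD}$ multiply to $\tfrac{\sqrt{d_i}\sqrt{d_j}}{\DD^2}$. Hence $P_Y=\eta_Y\circ\eps_Y$, and part (1) is immediate: $\eta_Y$ and $\eps_Y$ are morphisms in $Z(\A)$, hence so is their composite.

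The heart of the argument is the identity $\eps_Y\circ\eta_Y=\id_Y$ in $\End_{Z(\A)}(Y)$. Stacking the two formulas in the opposite order splices the two half-circles end to end; this forces their colors to agree, say both equal to $i$, and yields a single closed loop colored $X_i$ surrounding the central $F(Y)$-strand, with overall coefficient $\sum_i\tfrac{d_i}{\DD^2}$. With the convention of \firef{f:crossing} the loop is an over-crossing at both of its intersections with the central strand, so the strand passes behind the loop at both points and is therefore unlinked from it in $3$-space; the loop may be slid off, contributing $\sum_i\tfrac{d_i}{\DD^2}\cdot d_i\cdot\id_{F(Y)}=\tfrac{1}{\DD^2}\bigl(\sum_i d_i^2\bigr)\id_{F(Y)}=\id_{F(Y)}$ by \eqref{e:DD}, which lifts to $\id_Y$ in $Z(\A)$. (As a check, for $Y=\one$ the central strand disappears and one is simply evaluating the Kirby-colored unknot, again $\sum_i d_i^2=\DD^2$, giving $1$ after the normalization.)

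The remaining parts are then formal. For (2), $P_Y^2=(\eta_Y\circ\eps_Y)\circ(\eta_Y\circ\eps_Y)=\eta_Y\circ(\eps_Y\circ\eta_Y)\circ\eps_Y=\eta_Y\circ\eps_Y=P_Y$. For (3), the identity $\eps_Y\circ\eta_Y=\id_Y$ exhibits $Y$ as a retract of $I(F(Y))$ whose associated idempotent is precisely $P_Y=\eta_Y\circ\eps_Y$; consequently $\eta_Y$ and $\eps_Y$ restrict to mutually inverse isomorphisms between $Y$ and $\im(P_Y)$, and since both are morphisms of $Z(\A)$ this is an isomorphism in $Z(\A)$, canonical because $\eta_Y$ and $\eps_Y$ are the canonical images of $\id_{F(Y)}$.

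The one step carrying real content is the identity $\eps_Y\circ\eta_Y=\id_Y$: everything else is picture bookkeeping, but here one must genuinely check that the loop obtained by splicing the two half-circles is unknotted from the $F(Y)$-strand, so that the Kirby color $\sum_i d_i X_i$ sees it as the empty diagram and evaluates to $\DD^2$. This is exactly the place where the over-crossing convention of \firef{f:crossing} and the precise $\sqrt{d_i}/\DD$ normalization of \thref{t:I} are used.
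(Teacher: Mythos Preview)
Your proof is correct and is precisely what the paper means by ``Easily follows from \thref{t:I}'': you have spelled out the unit/counit factorization $P_Y=\eta_Y\circ\eps_Y$ implicit in that theorem and verified the retraction identity $\eps_Y\circ\eta_Y=\id_Y$ that makes the idempotent split onto $Y$. The paper's own proof is literally that one sentence, so your argument is the intended one, written out in full.
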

\begin{proof}
  Easily follows from \thref{t:I}.
\end{proof}
In particular, this  lemma implies that every object $Y$ in $Z(\A)$ is a
direct summand of an object of the form $I(V)$ for some $V\in \A$ (suffices
to take $V=F(Y)$).

We will need one more lemma.
\begin{lemma}\label{l:isomorphisms}
  For any $V,W\in \Obj(\A)$, one has the following commutative diagram of
functorial isomorphisms
\begin{equation}
   \begin{tikzpicture}
     \node[anchor=base] (a) at (3,0)
        {$\displaystyle \bigoplus_{Z\in\Irr(Z(\A))}
                   \Hom_\A(V,Z)\otimes \Hom_\A(Z, W)$};
     \node[anchor=base] (b) at (0,-2)
               {$\displaystyle \bigoplus_{i\in \Irr(\A)}
                \Hom_\A(V, i\otimes W\otimes i^*)$};
    \node[anchor=base] (c) at (6, -2)
         {$\displaystyle \Hom_{Z(\A)}(I(V),I(W))$};
   \draw[thin,->] (a)--(b) node[pos=0.5, left=3pt] {$f_1$};
   \draw[thin,->] (a)--(c) node[pos=0.5, right=3pt] {$f_2$};
   \draw[thin,->] (b)--(c) node[pos=0.5, above] {$f_3$};
    \end{tikzpicture}
\end{equation}
where the maps $f_i$ are defined by
\begin{align*}
 f_1&\colon \ph\otimes\psi\mapsto\sum_{i\in \Irr(\A)}\frac{\sqrt{d_i}}{\DD}
%%%%%%%%%%%%%%%%%%%%
\begin{tikzpicture}
\node[morphism] (ph) at (0,0.5) {$\ph$};
\node[morphism] (psi) at (0,-0.5) {$\psi$};
\draw[drinfeld center] (ph)--(psi);
\draw (ph)--(0,1.2) node[above]{$V$};
\draw (psi)--(0,-1.2) node[below]{$W$};
\draw[overline] (-0.5,-1.2)--(-0.5,-0.5) arc(180:0:0.5cm) --(0.5,-1.2)
node[right] {$i$};
\end{tikzpicture}
%%%%%%%%%%%%%%%%%%%%
\\
f_2&\colon \ph\otimes\psi\mapsto
\sum_{i,j \in \Irr(\A)} \frac{\sqrt{d_i}\sqrt{d_j}}{\DD^2}
%%%%%%%%%%%%%%%%%%%%
\begin{tikzpicture}
\node[morphism] (psi) at (0,-0.8) {$\psi$};
\node[morphism] (ph) at (0,0.8) {$\ph$};
\draw[drinfeld center] (ph)--(psi);
\draw (ph)--(0,1.5) node[above]{$V$};
\draw (psi)--(0,-1.5) node[below]{$W$};
\draw[overline] (-0.5,1.5)--(-0.5,0.8) arc(-180:0:0.5cm) --(0.5,1.5)
node[right] {$i$};
\draw[overline] (-0.5,-1.5)--(-0.5,-0.8) arc(180:0:0.5cm) --(0.5,-1.5)
node[right] {$j$};
\end{tikzpicture}
%%%%%%%%%%%%%%%%%%%%
\\
f_3&\colon \ph\mapsto
\sum_{j,k \in \Irr(\A)}\frac{\sqrt{d_i}\sqrt{d_j}\sqrt{d_k}}{\DD}\qquad
%%%%%%%%%%%%%%%%%%%%
\begin{tikzpicture}
\node[morphism] (ph) at (0,0) {$\ph$};
\node[morphism] (al1) at (-1,0) {$\al$};
\node[morphism] (al2) at (1,0) {$\al$};
\draw[<-] (al1)--(ph) node[midway, above]{$i$};
\draw[->] (ph)--(al2)  node[midway, above]{$i^*$};
\draw[->] (ph)--(0,-1) node[below]{$W$};
\draw[<-] (ph)--(0,1) node[above]{$V$};
\draw[<-] (al1)--(-1,-1)node[midway, left]{$j$};
\draw[<-] (-1,1)--(al1) node[midway, left]{$k$};
\draw[->] (al2)--(1,-1)node[midway, right]{$j$};
\draw[->] (1,1)--(al2) node[midway, right]{$k$};
\end{tikzpicture}
%%%%%%%%%%%%%%%%%%%%
\end{align*}

\end{lemma}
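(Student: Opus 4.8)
The plan is to recognize the three maps $f_1,f_2,f_3$ as concrete incarnations of two standard structural facts about the Drinfeld center: the biadjunction between $F$ and $I$ recorded in \thref{t:I}, and the semisimplicity of $Z(\A)$.

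First I would dispose of $f_3$. Using the canonical decomposition $F(I(W))=\bigoplus_{i\in\Irr(\A)}X_i\otimes W\otimes X_i^*$ coming from \eqref{e:I}, the middle space is literally $\Hom_\A(V,F(I(W)))$, and the second isomorphism of \thref{t:I}(2) (applied with $X=I(W)$) gives $\Hom_\A(V,F(I(W)))\isoto\Hom_{Z(\A)}(I(V),I(W))$. One then checks that this adjunction isomorphism, after substituting the explicit half-braiding of $I(W)$ from \firef{f:I} and simplifying the resulting picture with the ``bubble'' relations of \thref{t:local_rels2}, is precisely the map $f_3$ as drawn, with the $\sqrt{d_\bullet}/\DD$ factors combining as written; hence $f_3$ is an isomorphism. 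The map $f_2$ is handled the same way: for each $Z\in\Irr(Z(\A))$, \thref{t:I}(2) supplies isomorphisms $\Hom_\A(V,Z)\simeq\Hom_{Z(\A)}(I(V),Z)$ and $\Hom_\A(Z,W)\simeq\Hom_{Z(\A)}(Z,I(W))$, and, since $Z(\A)$ is semisimple, composition followed by summation over $Z$ identifies $\bigoplus_Z\Hom_{Z(\A)}(I(V),Z)\otimes\Hom_{Z(\A)}(Z,I(W))$ with $\Hom_{Z(\A)}(I(V),I(W))$. Stacking the two adjunction diagrams of \thref{t:I}(2) reproduces the diagram defining $f_2$ (with normalization $\sqrt{d_i}\sqrt{d_j}/\DD^2$), so $f_2$ is an isomorphism as well. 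Matching these graphical formulas with the abstract maps is a bookkeeping exercise in conventions and normalizations, and I do not expect any conceptual difficulty there.

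Since $f_2$ and $f_3$ are now known to be isomorphisms, it remains only to prove that the triangle commutes, equivalently that $f_1=f_3^{-1}\circ f_2$; it then follows in particular that $f_1$ is an isomorphism. It suffices to check this on generators: fix $Z\in\Irr(Z(\A))$, $\ph\in\Hom_\A(V,Z)$, $\psi\in\Hom_\A(Z,W)$, and let $\hat\ph\in\Hom_{Z(\A)}(I(V),Z)$, $\hat\psi\in\Hom_{Z(\A)}(Z,I(W))$ be their images under the adjunction isomorphisms of \thref{t:I}(2). By the very definitions of $f_2$ and $f_3$, the required identity reduces to showing that applying the adjunction isomorphism $\Hom_{Z(\A)}(I(V),I(W))\simeq\Hom_\A(V,F(I(W)))$ to the composite $\hat\psi\circ\hat\ph$ produces exactly the element $f_1(\ph\otimes\psi)$ of the middle space, namely the sum over $i\in\Irr(\A)$, weighted by $\sqrt{d_i}/\DD$, of the diagram in which an $i$-loop created by a coevaluation encircles the $Z$-strand running from $\ph$ to $\psi$ through the half-braiding of $Z$. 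This is a direct graphical computation: the adjunction unit contributes a single $i$-circle encircling the $I(W)$-strand, and since $\hat\psi$ intertwines the half-braidings one may slide this circle across $\hat\psi$ onto the $Z$-strand, after which the auxiliary summations introduced by the half-braiding of $I(W)$ (\firef{f:I}) collapse via the completeness relation \eqref{e:local_rels2a}, leaving precisely the $f_1$-picture. The main obstacle is exactly this manipulation — keeping straight which half-braiding is in play at each stage and verifying that the $\sqrt{d_\bullet}$-weights match — but it is routine graphical calculus rather than anything deep. Functoriality in $V$ and $W$ is immediate, since every diagram in sight is natural in its entries.
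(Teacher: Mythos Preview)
Your proposal is correct and follows exactly the natural line: identify $f_2$ and $f_3$ as instances of the $F\dashv I$ adjunction of \thref{t:I} combined with semisimplicity of $Z(\A)$, and then verify commutativity by a graphical slide of the adjunction loop through the $Z(\A)$-intertwiner. The paper does not give its own proof here but simply refers to Theorem~7.3 of \ocite{balsam-kirillov}, whose argument is essentially the one you outline.
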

The proof of this lemma repeats with minor changes the proof of Theorem~7.3
in \ocite{balsam-kirillov}.

%
%\begin{tikzpicture}[y=0.80pt, x=0.8pt,yscale=-1, inner sep=0pt, outer
%sep=0pt]
%\begin{scope}[shift={(0,-839.76377)}]% layer1
%  % path2826
%  \path[shift={(7.2098149,839.8919)},draw=black,line join=miter,line
%cap=butt,line
%    width=0.800pt] (152.2390,70.7380) .. controls (147.5319,66.5487) and
%    (149.4337,59.0969) .. (153.8711,55.5477) .. controls (160.7193,50.0701)
%and
%    (170.7407,52.9016) .. (175.4659,59.7358) .. controls (181.8282,68.9380)
%and
%    (178.0418,81.5813) .. (168.9473,87.3751) .. controls (157.5339,94.6462)
%and
%    (142.3178,89.9221) .. (135.5115,78.6615) .. controls (134.0134,76.1830)
%and
%    (132.9297,73.4591) .. (132.3011,70.6325);

%  % text2849
%  \path[shift={(0,839.76377)},fill=black] (149.073,64.372429) node[above
%right]
%    (text2849) {A   };

%\end{scope}

%\end{tikzpicture}

%\appendix
%section{Appendix A}

%%%%%%%%%%%%%%%%%%%%%%%%%%%%%%%%%%%%%%%%%%%%%%%%%%%%%%%%%%%%%%
\begin{bibdiv}
\begin{biblist}

%%%%%%%%%%%%%%%%%%%%%%%%%%%%%%%%%%%%%%%%%%%%%
\bib{BK}{book}{
   label={BakK2001},
   author={Bakalov, Bojko},
   author={Kirillov, Alexander, Jr.},
   title={Lectures on tensor categories and modular functors},
   series={University Lecture Series},
   volume={21},
   publisher={American Mathematical Society},
   place={Providence, RI},
   date={2001},
   pages={x+221},
   isbn={0-8218-2686-7},
   review={\MR{1797619 (2002d:18003)}},
}
%%%%%%%%%%%%%%%%%%%%%%%%%%%%%%%%%%%%%%%%%%%%%%%%%%%%
\bib{balmer}{article}{
   author={Balmer, Paul},
   author={Schlichting, Marco},
   title={Idempotent completion of triangulated categories},
   journal={J. Algebra},
   volume={236},
   date={2001},
   number={2},
   pages={819--834},
   issn={0021-8693},
   review={\MR{1813503 (2002a:18013)}},
   doi={10.1006/jabr.2000.8529},
}
%%%%%%%%%%%%%%%%%%%%%%%%%%%%%%%%%%%%%%%%%%%%%%%%%%%%
\bib{balsam2}{article}{ 
  author={Balsam, Benjamin},
  title={Turaev-Viro invariants as an extended TQFT II},
  date={2010-10},
}

%%%%%%%%%%%%%%%%%%%%%%%%%%%%%%%%%%%%%%%%%%%%%
\bib{balsam-kirillov}{article}{ 
   label={BalK2010},
  author={Balsam, Benjamin },
  author={Kirillov, Alexander, Jr},
  title={Turaev-Viro invariants as an extended TQFT},
  eprint={arXiv:1004.1533},
}
%%%%%%%%%%%%%%%%%%%%%%%%%%%%%%%%%%%%%%%%%%%%%
\bib{barrett}{article}{
   label={BW1996},
   author={Barrett, John W.},
   author={Westbury, Bruce W.},
   title={Invariants of piecewise-linear $3$-manifolds},
   journal={Trans. Amer. Math. Soc.},
   volume={348},
   date={1996},
   number={10},
   pages={3997--4022},
   issn={0002-9947},
   review={\MR{1357878 (97f:57017)}},
   doi={10.1090/S0002-9947-96-01660-1},
}
%%%%%%%%%%%%%%%%%%%%%%%%%%%%%%%%%%%%%%%%%%%%%
\bib{drinfeld}{article}{ 
  author={Drinfeld, Vladimir},
  author={Gelaki,Shlomo},
  author={Nikshych, Dmitri},
  author={Ostrik, Victor},
  title={On braided fusion categories I},
  eprint={arXiv:0906.0620},
}

% %%%%%%%%%%%%%%%%%%%%%%%%%%%%%%%%%%%%%%%%%%%%%
% \bib{etingof-ostrik}{article}{
%    author={Etingof, Pavel},
%    author={Ostrik, Viktor},
%    title={Finite tensor categories},
%    language={English, with English and Russian summaries},
%    journal={Mosc. Math. J.},
%    volume={4},
%    date={2004},
%    number={3},
%    pages={627--654, 782--783},
%    issn={1609-3321},
%    review={\MR{2119143 (2005j:18006)}},
% }

%%%%%%%%%%%%%%%%%%%%%%%%%%%%%%%%%%%%%%%%%%%%%
\bib{ENO2005}{article}{
   label={ENO2005},
   author={Etingof, Pavel},
   author={Nikshych, Dmitri},
   author={Ostrik, Viktor},
   title={On fusion categories},
   journal={Ann. of Math. (2)},
   volume={162},
   date={2005},
   number={2},
   pages={581--642},
   issn={0003-486X},
   review={\MR{2183279 (2006m:16051)}},
   doi={10.4007/annals.2005.162.581},
}

%%%%%%%%%%%%%%%%%%%%%%%%%%%%%%%%%%%%%%%%%%%%%
\bib{ENO2009}{article}{
   label={ENO2009},
   author={Etingof, Pavel},
   author={Nikshych, Dmitry},
   author={Ostrik, Victor},
   title={Fusion categories and homotopy theory},
  eprint={arXiv:0909.3140},
}
%%%%%%%%%%%%%%%%%%%%%%%%%%%%%%%%%%%%%%%%%%%%%
\bib{FKLW}{article}{
   label={FKLW2003},
   author={Freedman, Michael H.},
   author={Kitaev, Alexei},
   author={Larsen, Michael J.},
   author={Wang, Zhenghan},
   title={Topological quantum computation},
   note={Mathematical challenges of the 21st century (Los Angeles, CA,
   2000)},
   journal={Bull. Amer. Math. Soc. (N.S.)},
   volume={40},
   date={2003},
   number={1},
   pages={31--38 (electronic)},
   issn={0273-0979},
   review={\MR{1943131 (2003m:57065)}},
   doi={10.1090/S0273-0979-02-00964-3},
}

%%%%%%%%%%%%%%%%%%%%%%%%%%%%%%%%%%%%%%%%%%%%%
\bib{FHK}{article}{
   label={FHK1994},
   author={Fukuma, M.},
   author={Hosono, S.},
   author={Kawai, H.},
   title={Lattice topological field theory in two dimensions},
   journal={Comm. Math. Phys.},
   volume={161},
   date={1994},
   number={1},
   pages={157--175},
   issn={0010-3616},
   review={\MR{1266073 (95b:81179)}},
}

%%%%%%%%%%%%%%%%%%%%%%%%%%%%%%%%%%%%%%%%%%%%%
\bib{FHLT}{article}{ 
  label={FHLT2009}, 
  author={Freed, Daniel},
  author={Hopkins, Michael},
  author={Lurie, Jacob},
  author={Teleman, Constantin},
  title={Topological quantum field theories from compact Lie groups},
  eprint={arXiv:0905.0731},
}

%%%%%%%%%%%%%%%%%%%%%%%%%%%%%%%%%%%%%%%%%%%%%
\bib{FNWW}{article}{
   label={FNWW2008},
   author={Freedman, M.},
   author={Nayak, C.},
   author={Walker, K.},
   author={Wang, Z.},
   title={On picture (2+1)-TQFTs},
   eprint={arXiv:0806.1926},
}
%%%%%%%%%%%%%%%%%%%%%%%%%%%%%%%%%%%%%%%%%%%%%
\bib{greenough}{article}{
   label={Gre2009},
   author={Justin Greenough},
   title={Monoidal 2-structure of Bimodule Categories},
  eprint={arXiv:0911.4979},
}

%%%%%%%%%%%%%%%%%%%%%%%%%%%%%%%%%%%%%%%%%%%%%
\bib{kadar}{article}{ 
  label={KMR2009},
  author={Kadar, Zoltan},
  author={Marzuoli, Annalisa},
  author={Rasetti, Mario},
  title={Microscopic description of 2d topological phases, 
          duality and 3d state sums},
  eprint={arXiv:0907.3724},
}

%%%%%%%%%%%%%%%%%%%%%%%%%%%%%%%%%%%%%%%%%%%%%
\bib{PLCW}{article}{ 
   label={Kir2010},
  author={Kirillov Jr, Alexander},
  title={On piecewise linear cell decompositions},
  eprint={arXiv:1009.4227},
  date={September 2010},
}

%%%%%%%%%%%%%%%%%%%%%%%%%%%%%%%%%%%%%%%%%%%%%
\bib{kitaev}{article}{ 
   label={Kit2003},
  author={Kitaev, A. Yu.},
  title={Fault-tolerant quantum computation by anyons},
  journal={Annals of Physics},
  volume={303}, 
  number={1},
  date={2003},
  pages={2-303},
}
%%%%%%%%%%%%%%%%%%%%%%%%%%%%%%%%%%%%%%%%%%%%%
\bib{kuperberg}{article}{ 
  label={KKR2010},
  author={Koenig, Robert},
  author={Kuperberg, Greg},
  author={Reichardt, Ben W.},
  title={Quantum computation with Turaev-Viro codes},
  eprint={arXiv:1002.2816},
}
%%%%%%%%%%%%%%%%%%%%%%%%%%%%%%%%%%%%%%%%%%%%%
\bib{levin-wen}{article}{ 
   label={LW2005},
  author={Levin, Michael},
  author={Wen, Xiao-Gang},
  title={String-net condensation: A physical mechanism for topological
phases},  journal={Phys. Rev. B},
  volume={71},
  number={4},
  date={2005},
  doi={10.1103/PhysRevB.71.045110},
}

%%%%%%%%%%%%%%%%%%%%%%%%%%%%%%%%%%%%%%%%%%%%%
\bib{lurie}{article}{ 
   label={Lur2009},
  author={Lurie, Jacob},
  title={On the classification of topological quantum field theories},
  eprint={http://www-math.mit.edu/~lurie/},
}
%%%%%%%%%%%%%%%%%%%%%%%%%%%%%%%%%%%%%%%%%%%%%
\bib{morrison-walker}{article}{ 
   label={MW2010},
  author={Morrison, Scott},
  author={Walker, Kevin},
  title={The blob complex},
  eprint={http://canyon23.net/math/},
}
%%%%%%%%%%%%%%%%%%%%%%%%%%%%%%%%%%%%%%%%%%%%%
\bib{muger1}{article}{   
   label={Mug2003a},
   author={M{\"u}ger, Michael},
   title={From subfactors to categories and topology. I. Frobenius algebras
   in and Morita equivalence of tensor categories},
   journal={J. Pure Appl. Algebra},
   volume={180},
   date={2003},
   number={1-2},
   pages={81--157},
   issn={0022-4049},
   review={\MR{1966524 (2004f:18013)}},
   doi={10.1016/S0022-4049(02)00247-5},
}

%%%%%%%%%%%%%%%%%%%%%%%%%%%%%%%%%%%%%%%%%%%%%
\bib{muger2}{article}{
   label={Mug2003b},
   author={M{\"u}ger, Michael},
   title={From subfactors to categories and topology. II. The quantum
double of tensor categories and subfactors},
   journal={J. Pure Appl. Algebra},
   volume={180},
   date={2003},
   number={1-2},
   pages={159--219},
   issn={0022-4049},
   review={\MR{1966525 (2004f:18014)}},
   doi={10.1016/S0022-4049(02)00248-7},
}

%%%%%%%%%%%%%%%%%%%%%%%%%%%%%%%%%%%%%%%%%%%%%
\bib{oeckl}{book}{
   author={Oeckl, Robert},
   title={Discrete gauge theory},
   note={From lattices to TQFT},
   publisher={Imperial College Press},
   place={London},
   date={2005},
   pages={xii+202},
   isbn={1-86094-579-1},
   review={\MR{2174961 (2006i:81142)}},
}

%%%%%%%%%%%%%%%%%%%%%%%%%%%%%%%%%%%%%%%%%%%%%
\bib{ostrik-module}{article}{
   author={Ostrik, Victor},
   title={Module categories, weak Hopf algebras and modular invariants},
   journal={Transform. Groups},
   volume={8},
   date={2003},
   number={2},
   pages={177--206},
   issn={1083-4362},
   review={\MR{1976459 (2004h:18006)}},
   doi={10.1007/s00031-003-0515-6},
}

% %%%%%%%%%%%%%%%%%%%%%%%%%%%%%%%%%%%%%%%%%%%%%
% \bib{ostrik-finite}{article}{
%    author={Ostrik, Viktor},
%    title={Module categories over the Drinfeld double of a finite group},
%    journal={Int. Math. Res. Not.},
%    date={2003},
%    number={27},
%    pages={1507--1520},
%    issn={1073-7928},
%    review={\MR{1976233 (2004h:18005)}},
%    doi={10.1155/S1073792803205079},
% }

%%%%%%%%%%%%%%%%%%%%%%%%%%%%%%%%%%%%%%%%%%%%%
\bib{stirling}{article}{
   label={Stir2010},
  author={Stirling, Spencer D.},
  title={Counterexamples in Levin-Wen string-net models, group categories,
         and Turaev unimodality},
  eprint={arXiv:1004.1737},
}
%%%%%%%%%%%%%%%%%%%%%%%%%%%%%%%%%%%%%%%%%%%%%
\bib{turaev93}{article}{
   label={Tur1993},
   author={Turaev, Vladimir},
   title={Quantum invariants of links and $3$-valent graphs in
   $3$-manifolds},
   journal={Inst. Hautes \'Etudes Sci. Publ. Math.},
   number={77},
   date={1993},
   pages={121--171},
   issn={0073-8301},
   review={\MR{1249172 (94j:57012)}},
}

%%%%%%%%%%%%%%%%%%%%%%%%%%%%%%%%%%%%%%%%%%%%%
\bib{turaev}{book}{
   label={Tur1994},
   author={Turaev, V. G.},
   title={Quantum invariants of knots and 3-manifolds},
   series={de Gruyter Studies in Mathematics},
   volume={18},
   publisher={Walter de Gruyter \& Co.},
   place={Berlin},
   date={1994},
   pages={x+588},
   isbn={3-11-013704-6},
   review={\MR{1292673 (95k:57014)}},
}

%%%%%%%%%%%%%%%%%%%%%%%%%%%%%%%%%%%%%%%%%%%%%
\bib{TV}{article}{
   label={TV1992},
   author={Turaev, V. G.},
   author={Viro, O. Ya.},
   title={State sum invariants of $3$-manifolds and quantum $6j$-symbols},
   journal={Topology},
   volume={31},
   date={1992},
   number={4},
   pages={865--902},
   issn={0040-9383},
   review={\MR{1191386 (94d:57044)}},
   doi={10.1016/0040-9383(92)90015-A},
}

%%%%%%%%%%%%%%%%%%%%%%%%%%%%%%%%%%%%%%%%%%%%%
\bib{TV2}{article}{
   label={TV2010},
  author={Turaev, Vladimir},
  author={Virelizier, Alexis},
  title={On two approaches to 3-dimensional TQFTs},
  eprint={arXiv:1006.3501},
}

%%%%%%%%%%%%%%%%%%%%%%%%%%%%%%%%%%%%%%%%%%%%%
\bib{walker}{article}{ 
   label={Wal2010},
  author={Walker, Kevin},
  title={TQFTs [early incomplete draft], version 1h},
  eprint={http://canyon23.net/math/},
}

\end{biblist}
\end{bibdiv}
%%%%%%%%%%%%%%%%%%%%%%%%%%%%%%%%%%%%%%%%%%%%%%%%%%%%%%%%%%%%%%

\end{document}